\begin{document}
\newcommand{\N}{\mathbb{N}}
\newcommand{\Z}{\mathbb{Z}}
\newcommand{\Q}{\mathbb{Q}} %rationals
\newcommand{\C}{\mathbb{C}}
\newcommand{\R}{\mathbb{R}}
\newcommand{\fld}{\Bbbk}
\newcommand{\op}[1]{\text{#1}}
\newcommand{\ot}{\otimes}
\newcommand{\no}{\nonumber}
\newcommand{\KK}{\mathfrak{K}}
\newcommand{\feyn}{\textbf{F}}
\newcommand{\encirc}[1]{\begin{tikzpicture}[baseline=(C.base)] \node[draw,circle,inner sep=0.5pt](C) {\footnotesize #1}; \end{tikzpicture}}
\newcommand{\encircs}[1]{\begin{tikzpicture}[baseline=(C.base)] \node[draw,circle,inner sep=0.1pt](C) {\tiny #1}; \end{tikzpicture}}
\newcommand{\oP}{\mathcal{P}}	%various generic operads
\newcommand{\cCo}{\ensuremath{\mathsf{Cor}}}	%category of corollas = category of finite sets and bijections
\newcommand{\set}[2]{\left\{ #1 \ | \ #2 \right\} }	%set
\newcommand{\cyc}[1]{\left(\hspace{-.6ex}\left( #1 \right)\hspace{-.6ex}\right)}	%cyclic tuple
\newcommand{\cycu}[1]{\left(\hspace{-.3ex}\left( #1 \right)\hspace{-.3ex}\right)}	%SCRIPT cyclic tuple
\newcommand{\cycm}[1]{\left(\hspace{-.7ex}\left( #1 \right)\hspace{-.7ex}\right)}	%MEDIUM cyclic tuple
\newcommand{\cycb}[1]{\left(\hspace{-.9ex}\left( #1 \right)\hspace{-.9ex}\right)}	%MEDIUM cyclic tuple
\newcommand{\ooo}[2]{\sideset{_{#1}}{_{#2}}{\mathop{\circ}}}	%operadic composition with subscripts
\newcommand{\dg}[1]{{\left| #1 \right|}}	%degree
\newcommand{\id}{1}	%the identity;	KORBI uses just 1
\newcommand{\oQC}{\mathcal{QC}}	%Quantum Closed operad
\newcommand{\oQO}{\mathcal{QO}}	%Quantum Open operad
\newcommand{\oQOC}{\mathcal{QOC}}	%Quantum Open-Closed operad
\newcommand{\oOC}{\mathcal{OC}}	%Open-Closed operad
\newcommand{\oAss}{\mathcal{A}\mathit{ss}}	%Associative operad
\newcommand{\oCom}{\mathcal{C}\mathit{om}}	%Commutative operad
\newcommand{\oMod}[1]{\textbf{Mod}\left(#1\right)}	%modular envelope
\newcommand{\Span}{\mathrm{Span}}	%linear span
\newcommand{\gr}[1]{\mathsf{#1}}	%font for GRAPHS
\newcommand{\oT}{\mathcal{T}}	%various generic twisted operads
\newcommand{\oEnd}[1]{{\mathcal{E}_{#1}}}	%endomorphisms operad
\newcommand{\oooo}[4]{\prescript{#3}{#1}{\circ}^{#4}_{#2}}	%requires mathtools package
\newcommand{\cc}{\mathbf{c}}	%cycle c
\newcommand{\cd}{\mathbf{d}}	%cycle d
\newcommand{\co}{\mathbf{o}}	%cycle o
\newcommand{\cobar}{\mathbf{C}}	%cobar construction
\newcommand{\comp}{\circ}	%composition
\newcommand{\uo}{{\underline{o}}}	%index for OPEN
\newcommand{\uc}{{\underline{c}}}	%index for CLOSED
\newcommand{\ob}{{\overline{b}}} %b-b_0
\newcommand{\Stab}{{\textrm{Stab}}} %stabiliser
%%%%%
\newcommand{\ooot}[2]{\sideset{_{#1}}{_{#2}}{\mathop{\overline{\circ}}}}	%operadic composition
\newcommand{\oxit}[1]{\overline{\xi}_{#1}}	%operadic contraction
\newcommand{\cSg}{\mathsf{\Sigma}}	%category of natural numbers and genera, permutations
\newcommand{\std}{\mathbf{\rho}}	%a fixed bijection
\newcommand{\oxi}[1]{\xi_{#1}}	%operadic contraction
\newcommand{\cOp}{\mathsf{Op}}	%category of operads
\newcommand{\Sh}[1]{\mathrm{Sh}(#1)}	%shuffle
\newcommand{\oEndt}[1]{{\overline{\mathcal{E}}_{#1}}}	%skeletal endomorphisms operad
\newcommand{\perm}[2]{\left[\begin{matrix} #1 \\ #2 \end{matrix} \right]}
%%%

%%%%%%% PICTURES
\input{QOC_pic.tex}
%%%%%%%%%%

\theoremstyle{plain}
\newtheorem{theorem}{Theorem}
\newtheorem{lemma}[theorem]{Lemma}
\newtheorem{proposition}[theorem]{Proposition}
\newtheorem{corollary}[theorem]{Corollary}
\theoremstyle{definition}
\newtheorem{definition}[theorem]{Definition}
\theoremstyle{remark}
\newtheorem{example}[theorem]{Example}
\newtheorem{remark}[theorem]{Remark}
\newtheorem{convention}[theorem]{Convention}

\title{Modular operads and the quantum open-closed homotopy algebra\vspace{1.0cm}}

\author{Martin Doubek}
\email{martindoubek@seznam.cz}
\affiliation{Charles University, Faculty of Mathematics and Physics, Sokolovsk\'a 83, 186 75 Prague, Czech Republic}
\author{Branislav Jur\v co}
\email{branislav.jurco@gmail.com}
\affiliation{Charles University, Faculty of Mathematics and Physics, Sokolovsk\'a 83, 186 75 Prague, Czech Republic}
\author{Korbinian M\"unster}
\email{korbinian.muenster@physik.uni-muenchen.de} 
\affiliation{Arnold Sommerfeld Center for Theoretical Physics, Theresienstrasse 37, D-80333 Munich, Germany}
%\date{\today}

\begin{abstract}
\vspace{1.7cm}
\centerline{\bf Abstract \vspace{0.4cm}}
We verify that certain algebras appearing in string field theory are algebras over Feynman transform of modular operads which we describe explicitly.
Equivalent description in terms of solutions of generalized BV master equations are explained from the operadic point of view.
\end{abstract}

\maketitle
\thispagestyle{empty}
\newpage
\tableofcontents
\newpage

%%%%%%%%%%%%%%%%%%%%%%%%%%%%%%%%%%%%%%%%%%%%%%%%%%%%%%%
%%%%%%%%%%%%%%%%%%%%%%%%%%%%%%%%%%%%%%%%%%%%%%%%%%%%%%%
%%%%%%%%%%%%%%%%%%%%%%%%%%%%%%%%%%%%%%%%%%%%%%%%%%%%%%%

%%%%%%%%%%%%%%%%%%%%%
%%%%%%%  Editorial   %%%%%%%
%\setcounter{section}{-1}
%\section{Editorial}
%
%\todo[inline]{Action of $\Sigma$ on tensors \emph{without} the dot!}
%
%\todo[inline]{Possible : simplify the notation on p. $26$ and $32$ by ommiting the first arguments of $\alpha$.}

%%%%%%%%%%%%%%%%%%%%%
%%%%%%%  Introduction   %%%%%%%
\section{Introduction} %\label{sec:intro}

This article is, in particular, an extension of the work \cite{MarklLoop} by Markl.
He showed that algebras considered by Zwiebach in \cite{ZwiebachClosed} in the context of closed string theory are algebras over the Feynman transform of the modular envelope of the cyclic operad $\oCom$.
This packed the complicated axioms into standard constructions over $\oCom$, an easily understood algebraic object.
Moreover, this opens up the way for applications of operad homotopy theory to study of these algebras.
For example, minimal models and transfer theorems have some physical relevance for gauge fixing \cite{korbi}, \cite{Lazarev1}.
The key point of the applications of the homotopy theory is that the Feynman transform is a modular operad which is always cofibrant in a suitable model structure\footnote{We need a model structure on the category of twisted modular operads (see Section \ref{SECTwiModOpe}). This has not been systematically studied so far, and we don't attempt to do so in this article.}.
There are now standard approaches for minimal models and transfer theorems for algebras over cofibrant operads, which hopefully carry over to our setting (e.g. \cite{Lazarev1}, where the homotopy transfer is discussed).

The above mentioned Zwiebach's work deals with closed string theory.
In this article, we interpret the analogous algebras in open and open-closed string theory from the operadic point of view.
For open strings\footnote{At least for the topological string.}, the role of the modular envelope of $\oCom$ is played by the operad $\oQO$ (Quantum Open), which is easily understood in terms of $2$-dimensional surfaces with boundary and some extra structure.
We emphasize that only the homeomorphism classes of surfaces are considered, thus the operad has an easy combinatorial description.
%An operad closely related to $\oQO$ was considered by Barannikov \cite{BarannikovModopBV} to describe generalized ribbon graphs.
We consider the Feynman transform $\feyn{\oQO}$, an analogue of bar construction in the realm of modular operads, of $\oQO$ and describe the axioms of algebras over $\feyn{\oQO}$ explicitly in terms of generating operations and relations.
A particular case recovered are the axioms of quantum $A_\infty$ algebras of Herbst \cite{Herbst}.

In \cite{BarannikovModopBV}, Barannikov explained how algebras over the Feynman transform are equivalently described by solutions of a "master equation" in certain generalized BV algebra.
Applying this theory in the closed case, we obtain the BV algebra of Zwiebach's \cite{ZwiebachClosed}.
Application in the open case yields an improvement of the Herbst's \cite{Herbst}.

The approaches to $\feyn{\oQO}$ mentioned in the two above paragraphs are dual to each other:
The approach via generators and relations (Section \ref{SECAxiomsForQuantumAoo}) in the spirit of the usual $A_\infty$ algebras manifestly uses the dual of the composition in $\oQO$, while the approach via master equation (Section \ref{SECBarQO}) manifestly uses the composition directly.

Finally, we introduce a $2$-coloured modular operad $\oQOC$ (Quantum Open Closed) describing the algebraic structures in the open-closed case.
We make the generalized BV algebra explicit, thus obtaining a briefly mentioned result of \cite{KajiuraOpen-closed2} by Kajiura and Stasheff, which is deeply based on the open-closed string-field theory description by Zwiebach \cite{ZwiebachOpen-closed}. 
 
The original motivation for writing this article was understanding the work \cite{qocha} by M\"unster and Sachs.
Unfortunately, at the moment were not able to prove directly the equivalence of our approach to theirs. We hope to come back to this question in the future. 
The indirect relation is the following: In \cite{qocha}, the quantum open-closed homotopy algebra structure is obtained as a consequence of Zwiebach's open-closed quantum master equation, in our approach the open-closed homotopy algebra structure is, via Barannikov's approach \cite{BarannikovModopBV}, equivalent to a solution (of a non-commutative version) of the same.
\bigskip

There is an interesting pattern appearing:
Let's return back to closed strings to make things more precise.
The cyclic operad $\oCom$ in fact consist of (linear span of) $2$-dimensional surfaces of genus zero and boundary components, called closed (string) ends.
These surfaces relate to the vertices in the classical limit (genus zero) of the quantum (all genera) closed string field theory.
Let the operad $\oQC$ consists of $2$-dimensional surfaces of arbitrary genus with closed ends.
These surfaces relate to vertices in the Feynman diagrams in the \emph{quantum} closed string field theory.
%%%The consistency conditions of quantum closed string theory are related to algebras over $\feyn{\oQC}$.
It is easy to see that $\oQC$ is in fact the above mentioned modular envelope $\oMod{\oCom}$.

Thus the passage from classical to quantum vertices corresponds to taking the modular envelope of the corresponding operad.

The same pattern can be seen in the open case.
The cyclic operad $\oAss$ consists of $2$-dimensional surfaces of genus zero and one boundary component with marked points called open ends.
These surfaces relate to open vertices in the classical open string field theory.
The operad $\oQO$ consists of $2$-dimensional surfaces of arbitrary genus with arbitrary number of boundary components with closed ends.
These surfaces relate to open vertices in the Feynman diagrams in the \emph{quantum} open-closed string field theory\footnote{Recall, there is no consistent quantum string filed theory with open strings only.}.
As in the closed case, $\oQO$ is the modular envelope $\oMod{\oAss}$.

However, in the full open-closed case, the pattern seems to broke.
The operad for classical vertices, appearing in the work \cite{KajiuraOpen-closed1} by Kajiura and Stasheff, is not even cyclic, so $\oMod{\oP}$ doesn't make sense.
This is discussed in Section \ref{SECFQ}.
\bigskip

We finish the introduction by discussing some technical aspects of the paper.
The closed case discussed in \cite{MarklLoop} is very simple and can be dealt without paying too much attention to formal details.
In the open case, things get more complicated and one should be more careful.

To start with, we need a definition of (twisted) modular operad which is easy to verify in practice.
The standard definition in terms of triples (e.g. \cite{MarklOperads},\cite{GetzlerModop}) is inconvenient for this purpose.
Likewise, the biased definition in terms of collections $\{\oP(n,G)\}$ indexed by arities $n$ (and genus $G$) and composition $\ooo{i}{j}$ and contraction $\oxi{ij}$ (which is usual for ordinary operads) involves axioms which are too complicated.
So we choose an intermediate approach - the collections $\oP(C,G)$ indexed by finite sets $C$.
This way, the axioms can be stated succinctly and their geometric motivation is obvious (Sections \ref{SECModOp} and \ref{SECTwiModOpe}).
For our inherently geometric examples of operads $\oQC,\oQO$ and $\oQOC$, this definition is always easily verified.
The passage between collections indexed by integers and collections indexed by sets is discussed in some detail in Section \ref{SECSigmaVSSet}.

We also don't treat cyclic operads as ordinary operads with extra structure, but rather as objects on their own.
This emphasizes the geometric nature of the axioms.
The same approach has been adopted e.g. in \cite{Lukacs}.

To make the Feynman transform work, we need twisted operads.
The axioms stay succinct: the operadic compositions $\ooo{a}{b}$ and contractions $\oxi{ab}$ become degree $1$ morphisms, and a minus sign is introduced into the associativity axioms (Section \ref{SECTwiModOpe}).
Thus calculations with twisted operads are syntactically similar to those with untwisted operads.
The difference is analogous to computations in an algebra $A$ with degree $0$ multiplication satisfying the standard associativity relation $(ab)c=a(bc)$, and calculations in its suspension $\downarrow\!\!A$, where the multiplication has degree $1$ and satisfies $(ab)c=-(-1)^{\dg{a}}a(bc)$.
We emphasize that although $A$ and $\downarrow\!\!A$ in the above example are in a sense equivalent, the suspension trick doesn't work for modular operad and thus the use of twisted modular operads probably can't be avoided.
In this paper, similarly to \cite{KWZ}, we try to promote the use of twisted structures by showing that clear and explicit calculations can be performed with them.
This is best seen in the proof of Theorem \ref{THMFeynBaran}.

As the above approach is slightly nonstandard, we felt obliged to provide details.
Thus the notation is a bit overloaded and pace is slow.
We also kept the operadic prerequisites at minimum -- all the basic definitions are stated in full.
The only possibly technical part is the Feynman transform and it is wrapped in Theorem \ref{LEMMAAlgOverFeynTrans} giving a practical description of algebras over it.

%%%%%%%%%%%%%%%

\section{Conventions and notation}

\begin{enumerate}
%\item Notation: ordinary sets $\{\cdots\}$, ordered tuples $\ord{\cdots}$, cyclic tuples $\cyc{\cdots}$
\item $\N$ is the set of positive integers, $\N_0:=\N\cup\{0\}$.
\item $\fld$ is a field $\fld$ of characteristics $0$.
The multiplication in $\fld$ will be denoted $\cdot$ or omitted.
All (dg) vector spaces are considered over $\fld$.
\item Dg vectors spaces have differential of degree $+1$.
Morphisms of dg vector spaces are degree $0$ linear maps commuting with differentials.
\item $\sqcup$ is \emph{disjoint} union.
Whenever $A\sqcup B$ appears, $A,B$ are automatically assumed disjoint.
\item $\xrightarrow{\sim}$ denotes an iso (in particular a bijection).
\item $\uparrow$ is suspension.
\item $A^{\#}$ is the linear dual of $A$.
\end{enumerate}

%%%%%%%%%%%%%%%%%%%%%%%%%
%%%%%%%  modop, Feyn and BV   %%%%%%%
\section{Modular Operads, Feynman Transform and Master Equation}\label{sec:operads}

\subsection{Modular operad} \label{SECModOp}

\begin{definition} \label{DEFCorr}
Denote $\cCo$ the category of stable corollas: the objects are pairs $(C,G)$ with $C$ a finite set and $G$ a nonnegative integer such that the stability condition is satisfied: $$2(G-1)+|C|>0.$$
A morphism $(C,G)\to (D,G')$ is defined only if $G=G'$ and it is just a bijection $C\xrightarrow{\sim}D$.
\end{definition}

\begin{definition} \label{DEFModOp}
Modular operad $\oP$ consists of a collection
$$\set{\oP(C,G)}{(C,G)\in\cCo}$$ of dg vector spaces and three collections 
\begin{gather*}
	\set{\oP(\rho):\oP(C,G)\to\oP(D,G)}{\rho:(C,G)\to(D,G)\textrm{ a morphism in }\cCo} \\
	\set{\ooo{a}{b}:\oP(C_1\sqcup\{a\},G_1)\ot\oP(C_2\sqcup\{b\},G_2)\!\to\!\oP(C_1\sqcup C_2,G_1+G_2)}{(C_1,G_1),\!(C_2,G_2)\in\cCo} \\
	\set{\xi_{ab}:\oP(C\sqcup\{a,b\},G)\to\oP(C,G+1)}{(C,G)\in\cCo}.
\end{gather*}
of degree $0$ morphisms of dg vector spaces.
These data are required to satisfy the following axioms:
\begin{enumerate}
\item $\ooo{a}{b} (x\ot y)  = (-1)^{\dg{x}\dg{y}} \ooo{b}{a} (y\ot x)$ \hskip 1cm
			for any $x\in\oP(C_1\sqcup\{a\},G_1), y\in\oP(C_2\sqcup\{b\},G_2)$,
\item $\oP(\id_C)=\id_{\oP(C)}, \quad \oP(\rho\sigma)=\oP(\rho) \ \oP(\sigma)$ \hskip 1cm
			for any morphisms $\rho,\sigma$ in $\cCo$,
\item $(\oP(\rho|_{C_1}\sqcup\sigma|_{C_2})) \ \ooo{a}{b} = \ooo{\rho(a)}{\sigma(b)} \ (\oP(\rho)\ot\oP(\sigma))$
\item $\oP(\rho|_{C}) \ \xi_{ab} = \xi_{\rho(a)\rho(b)}\oP(\rho)$
\item $\xi_{ab} \ \xi_{cd} = \xi_{cd} \ \xi_{ab}$
\item $\xi_{ab} \ \ooo{c}{d} = \xi_{cd} \ \ooo{a}{b}$
\item $\ooo{a}{b} \ (\xi_{cd}\ot\id) = \xi_{cd} \ \ooo{a}{b}$
\item $\ooo{a}{b} \ (\id\ot\ooo{c}{d}) = \ooo{c}{d} \ (\ooo{a}{b}\ot\id)$
\end{enumerate}
whenever the expressions make sense.
\end{definition}

\begin{remark} \label{REMSModCyc}
If we consider only the first two collections satisfying only Axiom $2.$, the resulting structure is called $\cCo$-module (more familiar name would be $\cSg$-module, but we reserve this name for slightly different structure, see Section \ref{SECSigmaVSSet} below), which is simply a functor from $\cCo$ to dg vector spaces.
Obviously, by forgetting structure, a modular operad gives rise to its underlying $\cCo$-module.

If we consider only the first three collections satisfying only Axioms $1.,2.,3.,8.$, the resulting structure is called cyclic operad.
By restricting to $G=0$ and forgetting structure, a modular operad gives rise to its underlying cyclic operad.

All these notions are equivalent to their usual counterparts in e.g. \cite{MarklOperads}.
For example, Axiom $2.$ stands for the $\Sigma$-action, $3.,4.$ express the equivariance and $5.-8.$ express the associativity of the structure maps.
\end{remark}

%%%%%%%%%%%%%%%%%
\subsection{Feynman transform}

The Feynman transform of a modular operad $\oP$ is a twisted\footnote{This will be defined in a moment.} modular operad denoted $\feyn{\oP}$.
Roughly speaking, $\feyn{\oP}$ is spanned by graphs with vertices decorated by elements of $\oP^{\#}$.

We make this more precise in the following example of an element of $\feyn{\oP}(C,G)$.
Consider a graph $\gr{G}$
$$\PICGraph$$
A graph consists of vertices and half-edges.
Exactly one end of every half-edge is attached to a vertex.
The other end is either unattached (such an half-edge is called a leg) or attached to an end of another half-edge (in that case, these two half-edges form an edge).
Every end is attached to at most one vertex/end.
The half-edge structure of $\gr{G}$ is indicated on the picture on the right.
We require that every vertex $V_i$ is assigned a nonnegative integer $G_i$ such that $$\dim_{\Q}H_1(\gr{G},\Q) + \sum_i G_i = G.$$
We also require that the legs of $\gr{G}$ ($l_1,l_2,l_3$ in our case) are in bijection with $C$.
Finally we require $$2(G_i-1)+|V_i|>0$$ for every vertex $V_i$, where $|V_i|$ denotes the number of half-edges attached to $V_i$.
The graph $\gr{G}$ is ``decorated'' by an element $$(\uparrow\! e_1\wedge\cdots\wedge\uparrow\! e_5) \ot (P_1\ot P_2\ot P_3),$$
where $e_1,e_2,\ldots$ are all edges of $\gr{G}$, $\uparrow\! e_i$'s are formal elements of degree $+1$, $\wedge$ stands for the graded symmetric tensor product and finally $P_1\in\oP(\{h_1,\ldots,h_5\},G_1)^{\#}$ and similarly for $P_2$ and $P_3$ at vertices with $G_2$ and $G_3$.
Then the iso class of $\gr{G}$ together with $(\uparrow\! e_1\wedge\cdots\wedge\uparrow\! e_5) \ot (P_1\ot P_2\ot P_3)$ is an actual element of $\feyn{\oP}(C,G)$.

The operations $(\ooo{a}{b})_{\feyn{\oP}}$ and $(\xi_{ab})_{\feyn{\oP}}$ are defined by grafting of graphs, attaching together two previously unattached ends of two half-edges.

There is a Feynman differential $\partial_{\feyn{\oP}}$ on $\feyn{\oP}$ which adds an edge and modifies the decoration using the dual of $(\ooo{a}{b})_{\oP}$ or $(\xi_{ab})_{\oP}$.

Precise definitions are quite complicated technically (we refer to \cite{MarklOperads}).
Fortunately, we only need a tiny part of the Feynman transform theory, namely Theorem \ref{LEMMAAlgOverFeynTrans} which will come in a moment.

To avoid problems with duals, we assume that the dg vector space $\oP(C,G)$ is finite dimensional for any $(C,G)\in\cCo$ whenever $\feyn{\oP}$ appears.
This is sufficient for our applications, though it can probably be avoided using cooperads.
But, to our best knowledge, cooperads have never been investigated in the modular context.

One technical issue we treat in detail here is the notion of twisted operad, since $\feyn{\oP}$ is not an operad but a twisted operad.

%%%%%%%

\subsection{\texorpdfstring{Twisted modular operad}{Twisted modular operad}} \label{SECTwiModOpe}

\begin{definition} \label{DEFTwistedModOp}
A twisted modular operad $\oT$ consists of a collection
$$\set{\oT(C,G)}{(C,G)\in\cCo}$$
of dg vector spaces and a collection
$$\set{\oT(\rho):\oT(C,G)\to\oT(D,G)}{\rho:C\xrightarrow{\sim}D\textrm{ a bijection}, (C,G),(D,G)\in\cCo}$$
of degree $0$ morphisms of dg vector spaces and two collections
\begin{gather*}
	\set{\ooo{a}{b}:\oT(C_1\sqcup\{a\},G_1)\ot\oT(C_2\sqcup\{b\},G_2)\!\to\!\oT(C_1\sqcup C_2,G_1+G_2)}{(C_1,G_1),\!(C_2,G_2)\in\cCo} \\
	\set{\xi_{ab}:\oT(C\sqcup\{a,b\},G)\to\oT(C,G+1)}{(C,G)\in\cCo}.
\end{gather*}
of degree $+1$ morphisms of dg vector spaces.
These data are required to satisfy the following axioms:
\begin{enumerate}
\item $\ooo{a}{b} (x\ot y)  = (-1)^{\dg{x}\dg{y}} \ooo{b}{a} (y\ot x)$ \hskip 1cm
			for any $x\in\oT(C_1\sqcup\{a\},G_1), y\in\oT(C_2\sqcup\{b\},G_2)$,
\item $\oT(\id_C)=\id_{\oT(C)}, \quad \oT(\rho\sigma)=\oT(\rho) \ \oT(\sigma)$ \hskip 1cm
			for any morphisms $\rho,\sigma$ in $\cCo$,
\item $\oT(\rho|_{C_1}\sqcup\sigma|_{C_2}) \ \ooo{a}{b} = \ooo{\rho(a)}{\sigma(b)} \ (\oT(\rho)\ot\oT(\sigma))$
\item $\oT(\rho|_{C}) \ \xi_{ab} = \xi_{\rho(a)\rho(b)}\oT(\rho)$
\item $\xi_{ab} \ \xi_{cd} = - \xi_{cd} \ \xi_{ab}$
\item $\xi_{ab} \ \ooo{c}{d} = - \xi_{cd} \ \ooo{a}{b}$
\item $\ooo{a}{b} \ (\xi_{cd}\ot\id) = - \xi_{cd} \ \ooo{a}{b}$
\item $\ooo{a}{b} \ (\id\ot\ooo{c}{d}) = - \ooo{c}{d} \ (\ooo{a}{b}\ot\id)$
\end{enumerate}
whenever the expressions make sense.
\end{definition}

This notion is equivalent to the modular $\KK$-operad of Getzler and Kapranov \cite{GetzlerModop} (also called $\KK$-twisted modular operad), where $\KK$ is the determinant-of-edges coefficient system (also called hyperoperad).
However, these explicit axioms have never before appeared in the literature.

%%%%%%%

\subsection{\texorpdfstring{$\cSg$-modules}{Sigma-modules}} \label{SECSigmaVSSet}

So now we wish to define algebras over the Feynman transform, and for that, we need a twisted endomorphism operad $\oEnd{A}$.
Informally speaking, $\oEnd{A}$ consists of covariant tensors and the operadic composition and contraction is given by contraction of the tensors using a symplectic form of degree $-1$.
There is the following inconvenience:
In $\oEnd{A}(C,G)$, the \emph{set} $C$ should index inputs of a $|C|$-times covariant tensor.
But while there is no canonical order on $C$, the inputs of the tensor are ordered by definition.
This makes the definition of $\oEnd{A}$ in terms of a $\cCo$-module clumsy and unintuitive.

Thus it is helpful to restrict the category $\cCo$ of corollas to a smaller one where a canonical order is available:

\begin{definition}
$\cSg$ is the skeleton of $\cCo$ consisting of all stable corollas of the form $([n],G)$, $n\in\N_0$.
$\cSg$-module is a functor from $\cSg$ to dg vector spaces.
\end{definition}

Given a (twisted) modular operad $\oP$, we can restrict its underlying $\cCo$-module to $\cSg$-module.
Then we construct an analogue of operadic composition and contraction on the restricted module as follows:

We first need some fixed auxiliary permutations:
\begin{definition}
For $n\in\N_0$ and $\{a_1,a_2,\ldots\}\subset\N_0$, define 
$$n+\{a_1,a_2,\ldots\}:=\{n+a_1,n+a_2,\ldots\}.$$
Given $n\in\N_0$, define a bijection 
$$\std^{ij}:[n+2]-\{i,j\}\to[n]$$
by requiring it to be increasing.
Given $n_1,n_2\in\N_0$, define bijections 
\begin{gather*}
\std_1^i:[n_1+1]-\{i\}\to[n_1] \\
\std_2^j:[n_2+1]-\{j\}\to n_1+[n_2]
\end{gather*}
by requiring them to be increasing.
\end{definition}

\begin{definition} \label{DEFcircol}
Given a (twisted) modular operad $\oP$ with structure morphisms $\ooo{a}{b}$ and $\oxi{ab}$, define $\overline{\oP}$ to consist of three collections:
$$\set{\overline{\oP}(n,G)}{([n],G)\in\cCo},$$
a collection of dg $\fld\Sigma_n$-modules, and collections
\begin{gather*}
\set{\ooot{i}{j}:\overline{\oP}(n_1+1,G_1)\ot\overline{\oP}(n_2+1,G_2)\to\overline{\oP}(n_1+n_2,G_1+G_2)}{([n_1],G_1),\!([n_2],G_2)\in\cCo}, \\
\set{\oxit{ij}:\overline{\oP}(n+2,G)\to\overline{\oP}(n,G+1)}{([n],G)\in\cCo}
\end{gather*}
of degree $0$ ($1$) morphisms of dg vector spaces determined by formulas
\begin{gather*}
\overline{\oP}(n,G) := \oP([n],G), \\
\ooot{i}{j} := \oP(\std^i_1\lambda_1^{-1}|_{C_1}\sqcup\std^j_2\lambda_2^{-1}|_{C_2}) \ooo{a}{b} (\oP(\lambda_1)\ot\oP(\lambda_2)), \quad i:=\lambda_1^{-1}(a), j:=\lambda_2^{-1}(b), \\
\oxit{ij} := \oP(\std^{ij}\lambda^{-1}|_C)\oxi{ab}\oP(\lambda), \quad i:=\lambda^{-1}(a), j:=\lambda^{-1}(b),
\end{gather*}
where $\lambda:[n+2]\to C\sqcup\{a,b\}$, $\lambda_1:[n_1+1]\to C_1\sqcup\{a\}$ and $\lambda_2:[n_2+1]\to C_2\sqcup\{b\}$ are arbitrary bijection such that $C_1\cap C_2=\emptyset$.
\end{definition}

Obviously, $\overline{\oP}$ with $\ooot{i}{j}$'s and $\oxit{ij}$'s forgotten  is a $\cSg$-module.
One easily verifies that the definitions of $\ooot{i}{j}$ and $\oxit{ij}$ are independent of the choice of $\lambda,\lambda_1,\lambda_2$.

\begin{remark}
Certain choices can simplify the formulas:
If $\lambda=\id_{[n+2]}$, then $\oxit{ij} := \oP(\std^{ij})\oxi{ij}$.
%If $\lambda$ restricts to $\rho^{ij}$ on $[n+2]-\{i,j\}$, then $\oxit{ij} := \oxi{ab}\oP(\lambda)$.
If $\lambda_1$ maps $[n_1+1]-\{i\}$ increasingly onto $C_1$ and $\lambda_2$ maps $[n_2+1]-\{j\}$ increasingly onto $C_2$, then $\std^i_1\lambda_1^{-1}|_{C_1}\sqcup\std^j_2\lambda_2^{-1}|_{C_2}$ is a $(n_1,n_2)$-unshuffle mapping $C_1$ onto $[n_1]$.

Recall that $\rho\in\Sigma_n$ is called $(n_1,n_2)$-shuffle iff $\rho(1)<\cdots<\rho(n_1)$ and $\rho(n_1+1)<\cdots<\rho(n_1+n_2)$.
The set of all $(n_1,n_2)$-shuffles is denoted $\Sh{n_1,n_2}$.
Then $\rho\in\Sigma_n$ is called $(n_1,n_2)$-unshuffle iff $\rho^{-1}\in\Sh{n_1,n_2}$.
\end{remark}

The operations $\ooot{i}{j}$ and $\oxit{ij}$ satisfy certain properties, call them (P), analogous to those of Definitions \ref{DEFModOp} and \ref{DEFTwistedModOp}.
Denote $\cOp_\cCo$ the category of (twisted) modular operads.
It would be natural to define a new category $\cOp_\cSg$ of $\cSg$-modules with $\ooot{i}{j}$'s and $\oxit{ij}$'s satisfying (P), then notice that the construction $\oP\mapsto\overline{\oP}$ of Definition \ref{DEFcircol} induces an equivalence between $\cOp_\cCo$ and $\cOp_\cSg$, and finally use whichever of the two categories is convenient in a given situation.

Unfortunately, it turns out that the formulas corresponding to (P) are too complicated for any practical purposes.
Still, we use $\cOp_\cSg$ in several places of the paper.
For example, the endomorphism operad $\oEnd{A}$ has a particularly easy definition in $\cOp_\cSg$ (see Definition \ref{DEFEndSigma} below).

So we adopt the following point of view:
If we work with and object of $\cOp_\cSg$, we always assume it is of the form $\overline{\oP}$ for some $\oP\in\cOp_\cCo$.
This is the way we are going to rigorously define the endomorphism operad:
we define $\oEnd{A}\in\cOp_\cCo$ first, then construct $\oEndt{A}\in\cOp_\cSg$, and finally observe that $\oEndt{A}$ is nice and simple.
This should justify our exposition in the next section.

%%%%%%%%%

\subsection{\texorpdfstring{Endomorphism twisted modular operad}{Endomorphism twisted modular operad}}

Let $(A,d)$ be a dg vector space of dimension $n$ equipped with a symplectic form $\omega$ of degree $-1$, i.e.
$$\omega(u,v)\neq 0 \quad \Rightarrow\quad \dg{u}+\dg{v}=1$$
for any homogeneous elements of $A$.
Let $d(\omega)=0$, i.e.
$$\omega(d\ot\id+\id\ot d) = 0,$$
so that $(A,d,\omega)$ is a dg symplectic vector space.
Let $\{a_i\}$ be a homogeneous basis of $A$.
Define 
\begin{gather} \label{EQDefOmegaInverse}
b_i := \sum_{j=1}^{n} (-1)^{\dg{a_j}}\omega^{ij}a_j,
\end{gather}
where $\omega^{ij}$'s are the components of the matrix inverse of $\omega_{ij}:=\omega(a_i,a_j)$.
Now we can contract indices of tensors $\op{Hom}_{\fld}(A^{\ot n},\fld)$ using $\omega^{-1} := \sum_{i=1}^n a_i\ot b_i \in A\ot A$, but we prefer to express the contractions using the bases $\{a_i\}$ and $\{b_i\}$.

From now on, dg symplectic vector space will refer to a structure such as above including the bases $\{a_i\}$ and $\{b_i\}$.

Let's define a candidate for $\oEndt{A}$.
This is the nice and simple result we wish for:
\begin{definition} \label{DEFEndSigma}
\begin{gather*}
\oEndt{A}(n,G):=\op{Hom}_{\fld}(A^{\ot n},\fld) \\
d(f) := (-1)^{\dg{f}} f \circ d_{A^{\ot n}} = \sum_{i=1}^{n} (-1)^{\dg{f}} f(\cdots\ot\underbrace{d}_{\mathclap{i\textrm{-th}}}\ot\cdots) \\
\ooot{i}{j}(f\ot g):=\sum_k (-1)^{\dg{f}+\dg{g}\dg{b_k}}f(\cdots\ot\underbrace{a_k}_{\mathclap{i\textrm{-th}}}\ot\cdots)\cdot g(\cdots\ot\underbrace{b_k}_{\mathclap{j\textrm{-th}}}\ot\cdots) \\
\oxit{ij}(f) := \sum_k (-1)^{\dg{f}} f(\cdots\ot\underbrace{a_k}_{\mathclap{i\textrm{-th}}}\ot\cdots\ot\underbrace{b_k}_{\mathclap{j\textrm{-th}}}\ot\cdots) \\
\oEndt{A}(\rho)(f) = f\circ\rho^{-1}
\end{gather*}
for any permutation $\rho$.
\end{definition}

Now we need to verify that this candidate is indeed of the form $\oEndt{A}$ for some $\oEnd{A}\in\cOp_\cCo$.
We need a preliminary on unordered tensor product:

\begin{definition}
For any set $C$, we define $$\bigotimes_C A := \left. \bigoplus_{\psi:C\xrightarrow{\sim}[|C|]}\!\!\!A^{\ot|C|} \right/\sim,$$
where $\psi$'s are bijections.
The equivalence $\sim$ is given by 
\begin{gather} \label{EQSimVsEqual}
i_\psi(v_1\ot\cdots\ot v_{|C|}) \sim i_{\tau\psi}\ (\tau(v_1\ot\cdots\ot v_{|C|}))
\end{gather}
for any $\psi:C\xrightarrow{\sim}[|C|]$, any $\tau:[|C|]\xrightarrow{\sim}[|C|]$ and any $v_i$'s in $A$.
We have denoted $i_\psi : A^{\ot|C|} \to \bigoplus_{\psi} A^{\ot|C|}$ the canonical inclusion into the $\psi$-th summand.
Recall that the action of $\tau\in\Sigma_{|C|}$ on $V^{\ot|C|}$ is $$\tau(v_1\ot\cdots\ot v_{|C|}) = \pm\ v_{\tau^{-1}(1)}\ot\cdots\ot v_{\tau^{-1}(|C|)}$$ with the usual Koszul sign.
We denote $\tau\in\Sigma_n$ and corresponding $\tau:V^{\ot|C|}\to V^{\ot|C|}$ by the same symbol.
Let 
\begin{gather} \label{DEFIota}
\iota_\psi : A^{\ot|C|} \to \bigotimes_C A
\end{gather}
be the natural inclusion $i_\psi$ followed by the natural projection.
\end{definition}
Hence 
\begin{gather} \label{EQTinyUseful}
\iota_\psi = \iota_{\tau\psi} \circ \tau
\end{gather}
follows from \eqref{EQSimVsEqual}.
Observe that $\iota_\psi$ is an iso for any bijection $\psi$.

\begin{definition} \label{DEFEndCorolla}
Let
$$\oEnd{A}(C,G) := \textrm{Hom}_{\fld}(\bigotimes_C A,\fld)$$
for any $(C,G)\in\cCo$.

This graded vector space has a differential given by
$$d(f) \comp \iota_\psi := (-1)^{\dg{f}} \sum_{i=0}^{|C|-1} (f \comp \iota_\psi) (\id^{\ot i}\ot d\ot\id^{\ot |C|-i-1})$$
for any $f\in\oEnd{A}(C,G)$.

Let $f\in\oEnd{A}(C_1\sqcup\{a\},G_1), g\in\oEnd{A}(C_2\sqcup\{b\},G_2)$ and define 
\begin{gather} \label{EQoo}
\ooo{a}{b}(f\ot g) \comp \iota_\psi := \sum_{i=1}^n (-1)^{\dg{f}+\dg{g}\dg{b_i}} (f\comp\iota_{\psi_1})(\id^{\ot|C_1|}\ot a_i) \cdot (g\comp\iota_{\psi_2})(\id^{\ot|C_2|}\ot b_i),
\end{gather}
where $\psi : C_1\sqcup C_2 \xrightarrow{\sim} [|C_1|+|C_2|]$ is arbitrary satisfying $\psi(C_1)=[|C_1|]$ and $\psi_1, \psi_2$ are then defined by $\psi_1(c_1):=\psi(c_1),\ \psi_1(a):=|C_1|+1$ and $\psi_2(c_2):=\psi(c_2)-|C_1|,\ \psi_2(b):=|C_2|+1$ for all $c_1\in C_1, c_2\in C_2$.

For $f\in\oEnd{A}(C\sqcup\{a,b\},G)$, we define
\begin{gather} \label{EQxi}
\xi_{ab}(f) \comp \iota_\psi := (-1)^{\dg{f}} \sum_{i=1}^n (f\comp\iota_{\psi'})(\id^{\ot|C|}\ot a_i\ot b_i),
\end{gather} 
where $\psi:C\xrightarrow{\sim}[|C|]$ is arbitrary and $\psi'|_C=\psi|_C,\ \psi'(a):=|C|+1, \psi'(b):=|C|+2$.

Finally, for $\rho:C\xrightarrow{\sim}D$ and $\psi:D\xrightarrow{\sim}[|D|]$, we define $\oEnd{A}(\rho):\oEnd{A}(C,G)\to\oEnd{A}(D,G)$ by 
$$\oEnd{A}(\rho)(f) \comp \iota_\psi := f\comp\iota_{\psi\rho}.$$
Consequently, if $C=D$, then $\oEnd{A}(\rho)(f) \comp \iota_\psi = f\comp\iota_{\psi}\comp\psi\rho^{-1}\psi^{-1}$.
\end{definition}

The reader can now verify:
\begin{theorem} \label{THMEndIsOperad}
$\oEnd{A}$ of Definition \ref{DEFEndCorolla} is a twisted modular operad.
Taking its $\oEndt{A}$ (in the sense of Definition \ref{DEFcircol}) yields a result described in Definition \ref{DEFEndSigma}, if we identify
$$f\in\op{Hom}_{\fld}(\bigotimes_{[n]} A,\fld) = \oEnd{A}(n,G)$$
with
$$f\circ\iota_{\id_{[n]}} \in \op{Hom}_{\fld}(A^{\ot n},\fld).$$
\end{theorem}

The above identification will be done implicitly in the sequel.
It is formally justified by first writing an equation in terms of maps from the unordered tensor products and then composing both sides with $\iota_{\id_{[n]}}$.

\begin{convention}
We will write just $\oP$ for both $\oP\in\cOp_\cCo$ and $\overline{\oP}\in\cOp_\cSg$.
The former object consists of collections 
$$\{\oP(C,G)\},\ \{\ooo{a}{b}\},\ \{\oxi{ab}\},$$
the latter of
$$\{\oP(n,G)\},\ \{\ooot{i}{j}\},\ \{\oxit{ij}\}.$$
\end{convention}

\begin{remark}
In string field theory, $A$ corresponds to a subspace of the space of states of a proper conformal field theory, $d$ to the BRST operator and $\omega$ is related to the BPZ product with proper zero mode insertions.
\end{remark}

%%%%%%%%%%%%%%%%

\subsection{\texorpdfstring{Algebra over a twisted operad}{Algebra over a twisted operad}} \label{SECTAlgOverTwisted}

\begin{definition} \label{DEFAlgOverTwisted}
Let $\oT$ be a twisted modular operad.
An algebra over $\oT$ on a dg symplectic vector space $A$ is a twisted modular operad morphism $$\alpha : \oT \to \oEnd{A},$$
i.e. it is a collection $$\{\alpha(C,G):\oT(C,G)\to\oEnd{A}(C,G)\ | \ (C,G)\in\cCo\}$$ of dg vector space morphisms such that (in the sequel, we drop the notation $(C,G)$ at $\alpha(C,G)$ for brevity)
\begin{enumerate}
	\item $\alpha \comp \oT(\rho) = \oEnd{A}(\rho) \comp \alpha$ for any bijection $\rho$
	\item $\alpha \comp (\ooo{a}{b})_{\oT} = (\ooo{a}{b})_{\oEnd{A}} \comp (\alpha\ot\alpha)$
	\item $\alpha \comp (\xi_{ab})_{\oT} = (\xi_{ab})_{\oEnd{A}} \comp \alpha$
\end{enumerate}
\end{definition}

Hence every element $t\in\oT(C,G)$ is assigned a linear map $\alpha(t) : \bigotimes_C A\to\fld$.
In practice, however, one is rather interested in linear maps $A^{\ot|C|}\to\fld$.
Of course, $A^{\ot|C|}\cong\bigotimes_C A$, but this is not canonical!
We get around this nuisance as follows:
Observe that once we know $\alpha([n],G)$ for all corollas with $n,G\geq 0$, $\alpha$ is determined on other corollas by Axiom $1.$ of Definition \ref{DEFAlgOverTwisted}.
But for $[n]$, there is a canonical iso $A^{\ot n}\cong\bigotimes_{[n]}A$, namely $\iota_{\id_{[n]}}$ of Definition \ref{DEFIota}.
Hence we may replace $f : \bigotimes_{[n]}A\to\fld$ by $f \comp \iota_{\id_{[n]}} : A^{\ot n}\to\fld$ (compare to Theorem \ref{THMEndIsOperad}).
%To translate from a generic corolla $(C,G)$ to $([|C|],G)$, we often invoke the following lemma:
%
%\begin{lemma} \label{LEMMAPedantic}
%Let $\mathcal{Q}$ be any $\Sigma$-module\footnote{This is defined in Remark \ref{REMSModCyc}.}, let $\alpha:\mathcal{Q}\to\oEnd{A}$ be a $\Sigma$-module morphism\footnote{It means that $\alpha$ consists of the collection of Definition \ref{DEFAlgOverTwisted} but satisfies only Axiom $1.$}.
%Then 
%$$\alpha(C,G)(x) \comp \iota_\psi = (\alpha([|C|],G) \comp \mathcal{Q}(\psi))(x) \comp \iota_{\id_{[|C|]}}$$
%for any $(C,G)\in\cCo$, $x\in\mathcal{Q}(C,G)$ and any $\psi:C\xrightarrow{\sim}[|C|]$.% \qed
%\end{lemma}
%
%\begin{proof}
%\begin{gather*}
%\alpha(C,G)(x) \!\comp\! \iota_\psi = (\alpha(C,G) \!\comp\! \mathcal{Q}(\psi^{-1}) \!\comp\! \mathcal{Q}(\psi))(x) \!\comp\! \iota_\psi = (\oEnd{A}(\psi^{-1}) \!\comp\! \alpha([|C|],G)) \mathcal{Q}(\psi)(x) \!\comp\! \iota_\psi = \\
%=\alpha([|C|],G) \left(\mathcal{Q}(\psi)(x)\right) \comp \iota_{\id_{[|C|]}}.
%\end{gather*}
%\end{proof}

\subsection{\texorpdfstring{Algebra over the Feynman transform}{Algebra over the Feynman transform}}

The following theorem is essentially the only thing we need from the theory of Feynman transform.
It has already implicitly appeared in e.g. \cite{MarklLoop} and \cite{BarannikovModopBV}.

\begin{theorem} \label{LEMMAAlgOverFeynTrans}
Algebra over the Feynman transform $\feyn{\oP}$ of a modular operad $\oP$ on a dg vector space $A$ is equivalently determined by a collection
$$\set{\alpha(C,G):\oP(C,G)^{\#}\to\oEnd{A}(C,G)}{(C,G)\in\cCo}$$
of degree $0$ linear maps (no compatibility with differential on $\oP(C,G)^{\#}$!) such that 
\begin{align}
	\oEnd{A}(\rho) \comp \alpha(C,G) &= \alpha(D,G) \comp \oP(\rho^{-1})^{\#} \quad\textrm{for any bijection }\rho:C\xrightarrow{\sim} D \textrm{ and} \label{EQAlfOverFTOne} \\
	d \comp \alpha(C,G) &= \alpha(C,G) \comp \partial_{\oP}^{\#} + (\xi_{ab})_{\oEnd{A}} \comp \alpha(C\sqcup\{a,b\},G-1) \comp (\xi_{ab})^{\#}_{\oP} \ + \label{EQAlfOverFTTwo} \\
	&\hspace{-5em} + \frac{1}{2} \sum_{\substack{C_1\sqcup C_2 = C \\ G_1+G_2=G}} (\ooo{a}{b})_{\oEnd{A}} \comp \left( \alpha(C_1\sqcup\{a\},G_1)\ot\alpha(C_2\sqcup\{b\},G_2) \right) \comp (\oooo{a}{b}{C_1\sqcup\{a\},G_1}{C_2\sqcup\{b\},G_2})^{\#}_{\oP}, \nonumber
\end{align}
where $$(\oooo{a}{b}{C_1\sqcup\{a\},G_1}{C_2\sqcup\{b\},G_2})^{\#}_{\oP} : \oP(C,G)^{\#} \to \oP(C_1\sqcup\{a\},G_1)^{\#} \ot \oP(C_2\sqcup\{b\},G_2)^{\#}$$ is the dual of $\ooo{a}{b}$ on $\oP$ and $\partial_{\oP}$ is the differential on $\oP$.
\end{theorem}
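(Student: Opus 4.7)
The plan is to exploit the fact that $\feyn{\oP}$ is freely generated, as a twisted modular operad, by the ``corolla'' subspaces $\oP(C,G)^{\#}$ (single-vertex, no-edge graphs decorated by duals of operations in $\oP$). So a twisted modular operad morphism $\beta:\feyn{\oP}\to\oEnd{A}$ is completely pinned down by its restriction to corollas, and conversely any collection $\alpha(C,G):\oP(C,G)^{\#}\to\oEnd{A}(C,G)$ which is $\Sigma$-equivariant and compatible with the Feynman differential on corollas extends uniquely to a morphism. The content of the theorem is therefore (i) identifying $\Sigma$-equivariance on corollas with \eqref{EQAlfOverFTOne}, and (ii) identifying compatibility with $\partial_{\feyn{\oP}}$ on corollas with \eqref{EQAlfOverFTTwo}.

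First I would spell out the generation statement: every decorated graph in $\feyn{\oP}(C,G)$ is obtained from corollas by iterated application of $(\ooo{a}{b})_{\feyn{\oP}}$ (grafting a leg of one corolla to a leg of another) and $(\xi_{ab})_{\feyn{\oP}}$ (self-grafting two legs of a single corolla). Uniqueness of the extension is then immediate: any morphism $\beta$ must satisfy Axioms $2.$--$3.$ of Definition~\ref{DEFAlgOverTwisted}, which force its value on an $n$-edge graph in terms of its values on $(n{-}1)$-edge graphs. For existence, one defines $\beta$ on a decorated graph by applying $\alpha$ vertex-wise and then contracting along edges, in any chosen order, via $(\ooo{a}{b})_{\oEnd{A}}$ and $(\xi_{ab})_{\oEnd{A}}$; well-definedness on iso classes of graphs is guaranteed by Axioms $5.$--$8.$ of Definition~\ref{DEFTwistedModOp}, whose signs match exactly those produced by reordering the wedge product $\uparrow\!e_1\wedge\cdots\wedge\uparrow\!e_k$ of edge decorations in $\feyn{\oP}$.

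Next, for the compatibility with differentials, I would compute $\partial_{\feyn{\oP}}$ on a corolla decorated by $p^{\#}\in\oP(C,G)^{\#}$. By definition the Feynman differential has two pieces: the internal piece redecorates the vertex by $\partial_{\oP}^{\#}p^{\#}$, and the external piece adds a single edge in all possible ways. For a corolla there are exactly two kinds of ``add-an-edge'' contributions: adding a loop at the single vertex, which produces a corolla with two more legs decorated by $(\xi_{ab})^{\#}_{\oP}p^{\#}$, and splitting the vertex into two vertices joined by one edge, which produces a two-vertex graph decorated by $(\oooo{a}{b}{C_1\sqcup\{a\},G_1}{C_2\sqcup\{b\},G_2})^{\#}_{\oP}p^{\#}$, summed over decompositions of $(C,G)$ (the $\frac12$ compensating for the ordered/unordered counting of the two vertices). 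Applying the tentative $\beta$ to these graphs and rewriting using $(\xi_{ab})_{\oEnd{A}}$ and $(\ooo{a}{b})_{\oEnd{A}}$ yields precisely \eqref{EQAlfOverFTTwo}.

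Finally, I would verify by induction on the number of edges that the corolla-level identity $d\circ\beta=\beta\circ\partial_{\feyn{\oP}}$ propagates to all of $\feyn{\oP}$: the differential $d$ on $\oEnd{A}$ is a derivation with respect to $(\ooo{a}{b})_{\oEnd{A}}$ and $(\xi_{ab})_{\oEnd{A}}$, and $\partial_{\feyn{\oP}}$ has the analogous derivation property with respect to $(\ooo{a}{b})_{\feyn{\oP}}$ and $(\xi_{ab})_{\feyn{\oP}}$, so reducing from an $n$-edge graph to $(n{-}1)$-edge subgraphs closes the induction. The main obstacle is not conceptual but the tracking of Koszul signs — suspensions on edges in $\feyn{\oP}$, the degree $+1$ shift carried by the operations of a twisted modular operad, and the sign in \eqref{EQDefOmegaInverse}--\eqref{EQxi} all interact. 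These signs are exactly what is encoded in Getzler--Kapranov's determinant-of-edges hyperoperad $\KK$ and have been checked in detail in \cite{MarklOperads}; modulo that bookkeeping, the statement above is a direct unwinding of what a $\KK$-twisted morphism out of the cobar-like construction $\feyn{\oP}$ amounts to.
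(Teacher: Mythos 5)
The paper states this theorem without proof, citing \cite{MarklLoop} and \cite{BarannikovModopBV} as places where it has ``implicitly appeared'', so there is no in-paper argument to compare against. Your proposal follows the standard line: the underlying graded twisted modular operad of $\feyn{\oP}$ (forgetting the differential) is free on the one-vertex, zero-edge decorated graphs, which gives uniqueness of the extension; well-definedness of the extension on iso classes of graphs follows from Axioms $5.$--$8.$ of Definition~\ref{DEFTwistedModOp} together with $\Sigma$-equivariance; and compatibility of $\beta$ with the differentials reduces to the corolla case because both $d$ on $\oEnd{A}$ and $\partial_{\feyn{\oP}}$ satisfy (twisted) Leibniz rules with respect to the structure maps. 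Two points deserve more care if this sketch were worked out in full. First, the factor $\frac{1}{2}$: your ``ordered vs.\ unordered'' explanation is right generically, but when $(C_1,G_1)=(C_2,G_2)$ --- which forces $C_1=C_2=\emptyset$ --- the two ordered terms coincide rather than pair off, and there the $\frac{1}{2}$ must be read from the order-two automorphism group of the resulting two-vertex graph; this is exactly the Getzler--Kapranov normalization over iso classes and is worth stating. Second, the sign verification --- that the signed Axioms $5.$--$8.$, the wedge-of-edges line $\uparrow\!e_1\wedge\cdots\wedge\uparrow\!e_k$ inside $\feyn{\oP}$, the degree $+1$ carried by $(\ooo{a}{b})_{\oEnd{A}}$ and $(\xi_{ab})_{\oEnd{A}}$, and the condition $d\omega=0$ for $A$ all interact to give the claimed Leibniz rules --- is where essentially all of the real work lives; deferring this to \cite{MarklOperads} is reasonable in a sketch, but one should be clear that it is not a routine unwinding.
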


To make this compatible with \cite{MarklOperads} and \cite{GetzlerModop}, we have to interpret the sum as follows: If $C_1=C_2=\emptyset$ and $G_1=G_2$, then the corresponding term appears twice in the sum by definition.

To make algebras over Feynman transform explicit as easily as possible, we modify this theorem.
The first idea is that, in our applications, the description of $\oP$ is easier using $\cCo$-modules, while description of $\oEnd{A}$ is easier using $\mathsf{\Sigma}$-modules.
The second idea is that it is enough to determine $\alpha([n],G)$ for all $n,G$.
Moreover, $\alpha([n],G)$ is determined by its values on orbit representatives.
The $\kappa$'s below are intended to transform a generic element to such an orbit representative; a choice of the representatives and $\kappa$ will be adapted to a concrete application.

\begin{lemma} \label{LEMMAHardToUse}
Algebra over the Feynman transform $(\feyn\oP,\partial_{\feyn\oP})$ on a dg vector space $A$ is uniquely determined by a collection
$$\set{\alpha([n],G):\oP([n],G)^{\#}\to\oEnd{A}(n,G)}{([n],G)\in\cCo}$$
of degree $0$ linear maps (no compatibility with differential!) such that\footnote{In the sequel, we simplify the notation a bit: the $([n],G)$ at $\alpha([n],G)$ is usually omitted and so is the symbol $\circ$ for composition of maps.} 
\begin{gather*}
	\oEnd{A}(\rho) \ \alpha = \alpha \ \oP(\rho^{-1})^{\#} \quad\textrm{for any }\rho\in\Sigma_n \textrm{ and} \\
	d \alpha = \alpha \partial_{\oP^{\#}} +{} \\
	{}+ \oEnd{A}((\rho^{\kappa(a)\kappa(b)}\kappa|_{[n]})^{-1})(\overline{\xi}_{\kappa(a)\kappa(b)})_{\oEnd{A}}\alpha \oP(\kappa^{-1})^{\#} (\xi_{ab})^{\#}_{\oP} +{} \\
	\mathclap{ {}+ \frac{1}{2} \sum_{\mathclap{\substack{{}\\ C_1\sqcup C_2=[n] \\ G_1+G_2=G}}} \oEnd{A}(\rho_1^{\kappa_1(a)}\kappa_1|_{C_1}\sqcup\rho_2^{\kappa_2(b)}\kappa_2|_{C_2})^{-1} (\ooot{\kappa_1(a)}{\kappa_2(b)})_{\oEnd{A}} (\alpha\!\ot\!\alpha) (\oP(\kappa_1^{-1})^{\#}\!\ot\!\oP(\kappa_2^{-1})^{\#}) (\oooo{a}{b}{C_1\sqcup\{a\},G_1}{C_2\sqcup\{b\},G_2})^{\#}_{\oP}, }
\end{gather*}
where $\kappa:[n]\sqcup\{a,b\}\to[n+2]$, $\kappa_1:C_1\sqcup\{a\}\to[|C_1|+1]$ and $\kappa_2:C_2\sqcup\{b\}\to[|C_2|+1]$ are arbitrary bijections. \qed
\end{lemma}

%%%%
\subsection{Barannikov's theory} \label{SSECBarannikov}
%%%%

In \cite{BarannikovModopBV}, Barannikov observed that a twisted modular operad morphism $\feyn{\oP}\to\oEnd{A}$ is equivalently described as a solution of certain master equation in an algebra succinctly defined in terms of $\oP$.
For ordinary operads, there is a systematic approach to a similar problem via the convolution operad and its associated Maurer-Cartan equation, e.g. Section $6.4.2$ of \cite{LV}.
This theory has been extended in \cite{KWZ} to include Barannikov's example.

In this section, we restate the corresponding theorem in our formalism, reprove it and then adapt it to our applications.

%\begin{definition} \label{DEFThetasRenumbering}
%Let $\theta:[n+2]\to[n]\sqcup\{a,b\}$ be given by
%$$\theta(1):=a,\ \theta(2):=b,\ \theta(k):=k-2,\ 3\leq k\leq n+2.$$
%Assume $C_1,C_2,G_1,G_2$ are given so that $C_1\sqcup C_2=[n]$ and $G_1+G_2=G$.
%Define $\theta_1:[|C_1|+1]\xrightarrow{\sim}C_1\sqcup\{a\}$ and $\theta_2:[|C_2|+1]\xrightarrow{\sim}C_2\sqcup\{b\}$ by requiring
%\begin{gather*}
%\theta_1(1):=a,\ \theta_1|_{\{2,\ldots,|C_1|+1\}}:\{2,\ldots,|C_1|+1\}\to C_1 \textrm{ is increasing},\\
%\theta_2(1):=b,\ \theta_2|_{\{2,\ldots,|C_2|+1\}}:\{2,\ldots,|C_2|+1\}\to C_2 \textrm{ is increasing}.
%\end{gather*}
%Thorough this section, the notation $\theta,\theta_1,\theta_2$ is fixed for these particular permutations.

\begin{definition} \label{DEFBVOperaions}
Let $\oP$ be a modular operad.
Recall that we assume
$$\dim_\fld\oP([n],G)<\infty \quad \textrm{for all }n,G.$$
Define 
\begin{gather*}
P(n,G):=\left(\oP([n],G)\ot\oEnd{A}([n],G)\right)^{\Sigma_n} \\
P:= \prod_{\substack{n\geq 0\\ G\geq 0}} P(n,G)
\end{gather*}
with $P(n,G)$ being the space of invariants under the diagonal $\Sigma_n$ action on the tensor product.
For $e\in P(n,G)$, let
$$d(e) := \left( d_{\oP([n],G)}\ot\id_{\oEnd{A}([n],G)} - \id_{\oP([n],G)}\ot d_{\oEnd{A}([n],G)} \right) (e).$$
For $f\in P(n+2,G+1)$, let 
$$\Delta(f) := \left((\xi_{ab})_{\oP} \ot (\xi_{ab})_{\oEnd{A}}\right) (\oP(\theta)\ot\oEnd{A}(\theta)) (f)$$
for an arbitrary bijection $\theta:[n+2]\to[n]\sqcup\{a,b\}$.
Finally, for $g\in P(n_1+1,G_1)$ and $h\in P(n_2+1,G_2)$, let 
$$\{g,h\}:= \sum_{\mathclap{\substack{C_1\sqcup C_2=[n_1+n_2] \\ |C_1|=n_1,\ |C_2|=n_2}}} \left((\ooo{a}{b})_{\oP} \ot (\ooo{a}{b})_{\oEnd{A}}\right) \tau (\oP(\theta_1)\ot\oEnd{A}(\theta_1)\ot\oP(\theta_2)\ot\oEnd{A}(\theta_2)) (g\ot h),$$
where $\theta_1:[n_1+1]\to C_1\sqcup\{a\}$ and $\theta_2:[n_2+1]\to C_2\sqcup\{b\}$ are arbitrary bijections and $\tau$ is the composite
\begin{gather*}
\big(\oP(C_1\sqcup\{a\},G_1)\!\ot\!\oEnd{A}(C_1\sqcup\{a\},G_1)\big)^{\Sigma_{C_1\sqcup\{a\}}}\!\!\ot\!\big(\oP(C_2\sqcup\{b\},G_2)\!\ot\!\oEnd{A}(C_2\sqcup\{b\},G_2)\big)^{\Sigma_{C_2\sqcup\{b\}}} \! \hookrightarrow \\
\oP(C_1\sqcup\{a\},G_1)\ot\oEnd{A}(C_1\sqcup\{a\},G_1)\ot\oP(C_2\sqcup\{b\},G_2)\ot\oEnd{A}(C_2\sqcup\{b\},G_2) \to \\
\oP(C_1\sqcup\{a\},G_1)\ot\oP(C_2\sqcup\{b\},G_2)\ot\oEnd{A}(C_1\sqcup\{a\},G_1)\ot\oEnd{A}(C_2\sqcup\{b\},G_2)
\end{gather*}
exchanging the two middle factors.

These formulas extend by infinite linearity to operations $d,\Delta$ and $\{\}$ on $P$.
\end{definition}

It is easily seen that these operations take values in $P$ and don't depend on the choice of $\theta,\theta_1,\theta_2$.
The following formulas are also easy to verify:

\begin{lemma}
\begin{gather*}
\Delta(f) = \left((\oxit{ij})_{\oP} \ot (\oxit{ij})_{\oEnd{A}}\right) (f), \\
\{g,h\}:= \sum_{\rho\in\Sh{n_1,n_2}} (\oP(\rho)\ot\oEnd{A}(\rho)) \left((\ooot{i}{j})_{\oP} \ot (\ooot{i}{j})_{\oEnd{A}}\right) \tau (g\ot h),
\end{gather*}
where $i,j$ are arbitrary and can even be chosen differently for each $\rho$ in the second formula.
\qed
\end{lemma}

\begin{theorem}[\cite{BarannikovModopBV}] \label{THMFeynBaran}
Algebra over the Feynman transform $\feyn{\oP}$ on a dg symplectic space $A$ is equivalently given by a degree $0$ element, called generating function or action,
$$S\in P:=\prod_{\mathclap{n,G\geq 0}} \left(\oP([n],G)\ot\oEnd{A}([n],G)\right)^{\Sigma_n}$$
satisfying the master equation
$$d(S)+\Delta(S)+\frac{1}{2}\{S,S\}=0$$
in the algebra $(P,d,\Delta,\{\})$ defined above.

$(P,d,\Delta,\{\})$ is a ``generalized BV algebra'', i.e. all the operations $d:P\to P$, $\Delta:P\to P$ and $\{\}:P\ot P\to P$ have degree $+1$ and  $\{\}$ is symmetric and these satisfy
\begin{gather}
\{\{f,g\},h\}+(-1)^{\dg{h}(\dg{f}+\dg{g})}\{\{h,f\},g\}+(-1)^{\dg{f}(\dg{g}+\dg{h})}\{\{g,h\},f\}=0 \quad \textrm{for any }f,g,h\in P, \label{EQBVJacobi}\\
d^2=0, \nonumber \\
d\{\}+\{\}(d\ot\id+\id\ot d)=0, \nonumber\\
\Delta^2=0, \nonumber \\
\Delta\{\}+\{\}(\Delta\ot\id+\id\ot\Delta)=0, \label{EQBVDeltaBracket}\\
\Delta d + d\Delta=0. \nonumber
\end{gather}
\end{theorem}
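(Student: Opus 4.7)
The plan is to use Theorem \ref{LEMMAAlgOverFeynTrans} to reduce the statement to a translation exercise between the collection $\{\alpha(C,G):\oP(C,G)^{\#}\to\oEnd{A}(C,G)\}$ and a single generating function $S\in P$, then verify that the three terms in the master equation match the three terms on the right-hand side of \eqref{EQAlfOverFTTwo}. The BV identities on $(P,d,\Delta,\{\})$ will then be obtained by dualizing the modular operad axioms for $\oP$ (Axioms 5--8 of Definition \ref{DEFModOp}) and combining them with the same axioms in the twisted operad $\oEnd{A}$.

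First, I would use the finite dimensionality assumption on $\oP$ to identify, for each $(n,G)$, the space of degree $0$ linear maps $\oP([n],G)^{\#}\to\oEnd{A}([n],G)$ with $\oP([n],G)\tp\oEnd{A}([n],G)$ in degree $0$. Equivariance \eqref{EQAlfOverFTOne} for the bijections in $\Sigma_n$ is precisely the condition that the corresponding element of $\oP([n],G)\tp\oEnd{A}([n],G)$ lies in the $\Sigma_n$-invariants, so the collection $\alpha([n],G)$ assembles into $S=\sum_{n,G}\hbar^G S_{n,G}\in P$. By Axiom 1 and Lemma \ref{LEMMAPedantic}, knowing $\alpha$ on each $([n],G)$ determines $\alpha(C,G)$ on arbitrary stable corollas via a chosen bijection $\psi:C\xrightarrow{\sim}[|C|]$, so no information is lost.

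Next, I would rewrite \eqref{EQAlfOverFTTwo} under this identification. The term $\alpha\comp\partial_{\oP}^{\#}$ dualizes to the internal differential $d_{\oP}\tp\id$, while $d\comp\alpha$ becomes $\id\tp d_{\oEnd{A}}$; their difference (with the sign from dualization) recovers the operation $d$ on $P$. The $\xi$-term on the right of \eqref{EQAlfOverFTTwo} contains $(\xi_{ab})_{\oEnd{A}}$ on the $\oEnd{A}$ factor and, after dualizing $(\xi_{ab})_{\oP}^{\#}$, the map $(\xi_{ab})_{\oP}$ on the $\oP$ factor; after inserting $\oP(\theta)\tp\oEnd{A}(\theta)$ to match the generic labels $\{a,b,1,\dots,n\}$ with $[n+2]$, this is exactly $-\Delta$ acting on $S_{n+2,G-1}$, and the $\hbar$-shift matches because $\Delta$ lowers $n$ by $2$ and raises $G$ by $1$. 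The sum over splittings $C_1\sqcup C_2=C$, $G_1+G_2=G$ becomes, after choosing ordered representatives with $C_i\subset[n]$ and inserting the permutations $\theta_1,\theta_2$, a sum over ordered partitions; $\Sigma_n$-invariance of $S$ lets one pass to $-\{S,S\}$ evaluated on the appropriate bihomogeneous pieces, and the overall factor of $1/2$ in \eqref{EQAlfOverFTTwo} is exactly the symmetry factor compensating for the double counting by ordered versus unordered splits (the sign $(-1)^{\dg{x}\dg{y}}$ of Axiom 1 makes both terms equal). Putting the three pieces together, \eqref{EQAlfOverFTTwo} becomes $d(S)+\hbar\Delta(S)+\tfrac12\{S,S\}=0$.

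Finally, the noncommutative BV identities are verified pointwise: $\Delta^2=0$ and $\{,\}$-Jacobi come from Axioms 5 and 8 of Definition \ref{DEFModOp} applied to $\oP$ and simultaneously Axioms 5 and 8 of Definition \ref{DEFTwistedModOp} applied to $\oEnd{A}$, the extra minus signs on the twisted side combining with the signs from dualizing the untwisted operations to give the unsigned operadic axioms for the combined operations on $P$; the compatibility $\Delta\{,\}+\{,\}(\Delta\tp\id+\id\tp\Delta)=0$ comes from Axioms 6--7; compatibility with $d$ is the Leibniz rule for the tensor product differential. The main obstacle I anticipate is bookkeeping: carefully tracking the Koszul signs produced by the exchange $\tau$ and by $(-1)^{\dg{a_j}}$ in \eqref{EQDefOmegaInverse}, and checking that the permutations $\theta,\theta_1,\theta_2$ are chosen so that the identifications with $[n]$-labelled corollas are compatible with $\Sigma_n$-invariance — in particular, verifying that changing the choice of $\psi:C\xrightarrow{\sim}[|C|]$ does not affect the resulting $S$, which is precisely the $\Sigma_n$-invariance built into the definition of $P(n,G)$.
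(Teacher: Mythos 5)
Your proposal is correct and follows essentially the same route as the paper's sketch: identify $\Sigma_n$-equivariant maps $\oP([n],G)^{\#}\to\oEnd{A}([n],G)$ with $\Sigma_n$-invariant tensors via $\alpha\mapsto\sum_i p_i\tp\alpha(p_i^{\#})$ (the iso \eqref{EQBigXiIso}), observe that \eqref{EQAlfOverFTTwo} translates term-by-term into the $\hbar^G$-component of $d(S)+\hbar\Delta(S)+\frac{1}{2}\{S,S\}=0$, and check the BV identities by pairing the untwisted Axioms 5--8 for $\oP$ with the twisted ones for $\oEnd{A}$. The only imprecision is the passing reference to ``signs from dualizing'' in the BV-identity verification (the operations on $P$ are built directly from $\xi^{\oP}\tp\xi^{\oEnd{A}}$ etc., with no dualization involved; the matching twisted/untwisted signs plus the trivial Koszul sign from $|\xi^{\oP}|=0$ do the work), but this does not affect the argument's correctness.
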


Notice that the generalized BV algebra is a suspension of Lie algebra with two commuting differentials satisfying the Leibniz relation w.r.t. the bracket.

\begin{proof}
Consider the iso
\begin{align}
Z : \op{Hom}_{\Sigma_C}(\oP(C,G)^{\#},\oEnd{A}(C,G)) &\cong \left( \oP(C,G)\ot\oEnd{A}(C,G) \right)^{\Sigma_C} \label{EQBigXiIso} \\
\kappa &\mapsto Z(\kappa) := \sum_{i} (-1)^{\dg{\kappa}\dg{p_i}} p_i\ot\kappa(p_i^{\#}) \no
\end{align}
where the LHS is the space of all linear $\Sigma_C$-equivariant maps, $\{p_i\}$ is a $\fld$-basis of $\oP(C,G)$ and $\{p_i^{\#}\}$ is its dual basis.

Assume $\alpha$ as in Theorem \ref{LEMMAAlgOverFeynTrans} determines an algebra over $\feyn{\oP}$.
Let $S_{n,G}:= Z(\alpha([n],G)) = \sum_i p_i\ot\alpha([n],G)(p_i^{\#})(-1)^{\dg{\alpha}\dg{p_i}}$, where $\{p_i\}$ is a basis of $\oP([n],G)$.
Let $S := \sum_{n,G} S_{n,G}$.
Obviously $S \in P$.
In Equation \eqref{EQAlfOverFTTwo} for $C=[n]$ and fixed $G$, denote
\begin{gather*}
\beta = \alpha([n],G) \comp \partial_{\oP}^{\#} - d \comp \alpha([n],G) \\
\gamma = (\xi_{ab})_{\oEnd{A}} \comp \alpha([n]\sqcup\{a,b\},G-1) \comp (\xi_{ab})^{\#}_{\oP} \\
\delta = \frac{1}{2} \sum_{\substack{C_1\sqcup C_2 = [n] \\ G_1+G_2=G}} (\ooo{a}{b})_{\oEnd{A}} \comp \left( \alpha(C_1\sqcup\{a\},G_1)\ot\alpha(C_2\sqcup\{b\},G_2) \right) \comp (\oooo{a}{b}{C_1\sqcup\{a\},G_1}{C_2\sqcup\{b\},G_2})^{\#}_{\oP}.
\end{gather*}
Now we apply $Z$ to \eqref{EQAlfOverFTTwo} and get $0=Z(\beta)+Z(\gamma)+Z(\delta)$.
We will evaluate $Z(\delta)$, the other terms are easier and we leave them to the reader.
To simplify the notation, we write $\alpha$ for $\alpha(C,G)$.
Since $\dg{\delta}=1$, we get
\begin{gather} \label{EQZetI}
2Z(\delta) = \sum_{i} p_i \ot \sum_{C_1,C_2,G_1,G_2} (\ooo{a}{b})_{\oEnd{A}} (\alpha\ot\alpha) (\oooo{a}{b}{C_1\sqcup\{a\},G_1}{C_2\sqcup\{b\},G_2})^{\#}_{\oP} (p_i^{\#}) (-1)^{\dg{p_i}}.
\end{gather}
Let $\{x_k\}$ be a basis of $\oP(C_1\sqcup a,G_1)$ and $\{y_l\}$ be a basis of $\oP(C_2\sqcup b,G_2)$.
Then let
$$(\ooo{a}{b})_{\oP}(x_k\ot y_l) = \sum_m O_{kl}^i p_i$$
for some $O_{kl}^i\in\fld$.
Thus
$$(\oooo{a}{b}{C_1\sqcup\{a\},G_1}{C_2\sqcup\{b\},G_2})^{\#}_{\oP}(p_i^{\#}) = \sum_{k,l} O_{kl}^i x_k^\# \ot y_l^\# (-1)^{\dg{x_k}\dg{y_l}}$$
since the dual base vector to $x_k \ot y_l$ is $x_k^\# \ot y_l^\# (-1)^{\dg{x_k}\dg{y_l}}$.
Observe that $O_{kl}^i\neq 0$ implies $\dg{x_k}+\dg{y_l}=\dg{p_i}$ and we therefore replace $(-1)^{\dg{p_i}}$ by $(-1)^{\dg{x_k}+\dg{y_l}}$ in \eqref{EQZetI}:
\begin{gather*}
\sum_{C_1,C_2,G_1,G_2} \sum_{i,k,l} O_{kl}^i p_i \ot (\ooo{a}{b})_{\oEnd{A}} (\alpha\ot\alpha) (x_k^\# \ot y_l^\#) (-1)^{\dg{x_k}\dg{y_l}+\dg{x_k}+\dg{y_l}} = \\
= \sum_{C_1,C_2,G_1,G_2} \sum_{k,l} (\ooo{a}{b})_{\oP} (x_k\ot y_l) \ot (\ooo{a}{b})_{\oEnd{A}} (\alpha(x_k^\#) \ot \alpha(y_l^\#)) (-1)^{\dg{x_k}\dg{y_l}+\dg{x_k}+\dg{y_l}+\dg{\alpha}\dg{x_k}} = \\
\mathclap{ = \!\! \sum_{\substack{C_1,C_2,\\ G_1,G_2}} \!\! \sum_{k,l} (\!\ooo{a}{b})_{\oP} (\oP(\theta_1)\!\ot\!\oP(\theta_2)) (x'_k\!\ot\! y'_l) \ot (\!\ooo{a}{b})_{\oEnd{A}} (\oEnd{A}(\theta_1)\!\ot\!\oEnd{A}(\theta_2)) (\alpha({x'_k}^\#) \!\ot\! \alpha({y'_l}^\#)) (-1)^{\dg{x_k}\dg{y_l}+\dg{x_k}+\dg{y_l}+\dg{\alpha}\dg{x_k}}, }
\end{gather*}
where $\theta_1,\theta_2$ are arbitrary as in Definition \ref{DEFBVOperaions}; and $x'_k:=\oP(\theta_1^{-1})(x_k)$, thus these form a basis of $\oP([n_1+1],G_1)$ such that $\dg{x'_k}=\dg{x_k}$; and similarly for $\{y'_l\}$.
Rearranging the last expression yields
\begin{gather*}
\sum_{C_1,C_2,G_1,G_2} ((\ooo{a}{b})_{\oP} \ot (\ooo{a}{b})_{\oEnd{A}}) \tau (\oP(\theta_1)\ot\oEnd{A}(\theta_1)\ot\oP(\theta_2)\ot\oEnd{A}(\theta_2)) \\
\left( \sum_k x'_k\ot\alpha({x'_k}^\#) (-1)^{\dg{\alpha}\dg{x'_k}} \ot \sum_l y'_l\ot \alpha({y'_l}^\#) (-1)^{\dg{\alpha}\dg{y'_l}} \right) (-1)^{\epsilon} = \\
= \sum_{\substack{n_1+n_2=n\\ G_1+G_2=G}} \{S_{n_1+1,G_1},S_{n_2+1,G_2}\},
\end{gather*}
where
$\epsilon := \dg{x_k}\dg{y_l}+\dg{x_k}+\dg{y_l}+\dg{\alpha}\dg{x_k}+\dg{\alpha}\dg{x'_k}+\dg{\alpha}\dg{y'_l}+\dg{x'_k}+\dg{y'_l}+\dg{y'_l}(\dg{\alpha}+\dg{x'_k}) \equiv 0 \mod 2$ (recall that $(\ooo{a}{b})_{\oEnd{A}}$ has degree $1$).

We treat $\beta$ and $\gamma$ similarly to find that \eqref{EQAlfOverFTTwo} for $C=[n]$ implies
$$0=Z(\beta)+Z(\gamma)+Z(\delta)=dS_{n,G}+\Delta S_{n+2,G-1}+\frac{1}{2}\sum_{\substack{n_1+n_2=n\\ G_1+G_2=G}} \{S_{n_1+1,G_1},S_{n_2+1,G_2}\}.$$
Now summing this over $n\geq 0$ and $G\geq 0$ yields the master equation.

Conversely, if $S$ is a degree $0$ solution of the master equation, then it gives rises to $\alpha$ satisfying conditions of the Theorem \ref{LEMMAAlgOverFeynTrans}.
Indeed, it suffices to reverse the above reasoning.

Finally, the axioms of generalized BV algebra can be verified by a straightforward computation.
The axioms involving $d$ are easy to prove and we leave them to the reader.
In the following computation, we write briefly $\sigma x$ instead of $\oP(\sigma)x$ or $\oEnd{A}(\sigma)x$; it will always be clear from the context which operad is considered.

As a warmup, we prove $\Delta^2=0$.
It is easy to verify that
\begin{gather} \label{EQDeltaSquare}
\Delta^2 = ((\xi_{ab})_{\oP}(\xi_{cd})_{\oP}\ot(\xi_{ab})_{\oEnd{A}}(\xi_{cd})_{\oEnd{A}})(\theta\ot\theta),
\end{gather}
where $\theta:[n]\to[n-4]\sqcup\{a,b,c,d\}$ is an arbitrary bijection.
Now consider $\sigma$ mapping $a\mapsto c,\ b\mapsto d,\ c\mapsto a,\ d\mapsto b$ and leaving $[n-4]$ intact.
Since $\theta$ is arbitrary, we precompose it with $\sigma$.
Thus \eqref{EQDeltaSquare} equals
$$((\xi_{ab})_{\oP}(\xi_{cd})_{\oP}\ot(\xi_{ab})_{\oEnd{A}}(\xi_{cd})_{\oEnd{A}})(\sigma\theta\ot\sigma\theta).$$
By equivariance of $\xi$, this equals
$$((\xi_{cd})_{\oP}(\xi_{ab})_{\oP}\ot(\xi_{cd})_{\oEnd{A}}(\xi_{ab})_{\oEnd{A}})(\theta\ot\theta).$$
By associativity, this equals
$$-((\xi_{ab})_{\oP}(\xi_{cd})_{\oP}\ot(\xi_{ab})_{\oEnd{A}}(\xi_{cd})_{\oEnd{A}})(\theta\ot\theta).$$
Thus $\Delta^2=0$.

Next, we prove \eqref{EQBVDeltaBracket}.
A tedious but straightforward calculation yields
\begin{gather}
\Delta\{\} = \nonumber \\ 
= \sum_{\substack{|C_1|=n_1-3\\ |C_2|=n_2-1}} ((\xi_{ab})_{\oP}(\ooo{c}{d})_{\oP}\ot(\xi_{ab})_{\oEnd{A}}(\ooo{c}{d})_{\oEnd{A}})(\theta_1\ot\theta_2\ot\theta_1\ot\theta_2)\tau + \nonumber \\
+ 2\sum_{\substack{|C_1|=n_1-2\\ |C_2|=n_2-2}} ((\xi_{ab})_{\oP}(\ooo{c}{d})_{\oP}\ot(\xi_{ab})_{\oEnd{A}}(\ooo{c}{d})_{\oEnd{A}})(\theta'_1\ot\theta'_2\ot\theta'_1\ot\theta'_2)\tau + \label{EQDeltaBracketCalc} \\
+ \sum_{\substack{|C_1|=n_1-1\\ |C_2|=n_2-3}} ((\xi_{ab})_{\oP}(\ooo{c}{d})_{\oP}\ot(\xi_{ab})_{\oEnd{A}}(\ooo{c}{d})_{\oEnd{A}})(\theta''_1\ot\theta''_2\ot\theta''_1\ot\theta''_2)\tau, \nonumber
\end{gather}
where the bijections $\theta_1:[n_1]\to C_1\sqcup\{abc\}, \theta_2:[n_2]\to C_2\sqcup\{d\}, \theta'_1:[n_1]\to C_1\sqcup\{ac\}, \theta'_2:[n_2]\to C_2\sqcup\{bd\}, \theta''_1:[n_1]\to C_1\sqcup\{c\}, \theta''_2:[n_2]\to C_2\sqcup\{abd\}$ are arbitrary.
The terms in the three sums are depicted respectively as
$$
\begin{tikzpicture}[baseline=-\the\dimexpr\fontdimen22\textfont2\relax]
\filldraw (-1,0) circle (1pt);
\filldraw (0,0) circle (1pt);
\draw (-1,0) -- (0,0);
\draw (-1,0) .. controls (-2,1) and (0,1) .. (-1,0);
\draw (-.5,.1) -- (-.5,-.1);
\draw (-1,.85) -- (-1,.65);
\node at (-.7,-.2) {$\scriptstyle{c}$};
\node at (-.3,-.2) {$\scriptstyle{d}$};
\node at (-1.4,.4) {$\scriptstyle{a}$};
\node at (-.6,.4) {$\scriptstyle{b}$};
\end{tikzpicture}
\qquad
\begin{tikzpicture}[baseline=-\the\dimexpr\fontdimen22\textfont2\relax]
\filldraw (-1,0) circle (1pt);
\filldraw (0,0) circle (1pt);
\draw (-1,0) -- (0,0);
\draw (-1,0) .. controls (-1,1) and (0,1) .. (0,0);
\draw (-.5,.1) -- (-.5,-.1);
\draw (-.5,.85) -- (-.5,.65);
\node at (-.7,-.2) {$\scriptstyle{c}$};
\node at (-.3,-.2) {$\scriptstyle{d}$};
\node at (-.7,.9) {$\scriptstyle{a}$};
\node at (-.3,.9) {$\scriptstyle{b}$};
\end{tikzpicture}
\qquad
\begin{tikzpicture}[baseline=-\the\dimexpr\fontdimen22\textfont2\relax]
\filldraw (-1,0) circle (1pt);
\filldraw (0,0) circle (1pt);
\draw (-1,0) -- (0,0);
\draw (0,0) .. controls (-1,1) and (1,1) .. (0,0);
\draw (-.5,.1) -- (-.5,-.1);
\draw (0,.85) -- (0,.65);
\node at (-.7,-.2) {$\scriptstyle{c}$};
\node at (-.3,-.2) {$\scriptstyle{d}$};
\node at (-.4,.4) {$\scriptstyle{a}$};
\node at (.4,.4) {$\scriptstyle{b}$};
\end{tikzpicture}
$$

For the middle term, recall that the expression
\begin{gather} \label{EQExpIndepend}
((\xi_{ab})_{\oP}(\ooo{c}{d})_{\oP}\ot(\xi_{ab})_{\oEnd{A}}(\ooo{c}{d})_{\oEnd{A}})(\theta'_1\ot\theta'_2\ot\theta'_1\ot\theta'_2)
\end{gather}
doesn't depend on the choice of $\theta$'s.
We can thus precompose $\theta'_1$ with a permutation $\sigma_1$ exchanging $a$ and $c$ and leaving everything else in place.
Similarly we precompose $\theta'_2$ with an exchange $\sigma_2$ of $b$ and $d$.
Thus \eqref{EQExpIndepend} equals
\begin{gather*}
((\xi_{ab})_{\oP}(\ooo{c}{d})_{\oP}\ot(\xi_{ab})_{\oEnd{A}}(\ooo{c}{d})_{\oEnd{A}})(\sigma_1\theta'_1\ot\sigma_2\theta'_2\ot\sigma_1\theta'_1\ot\sigma_2\theta'_2) = \\
= ((\xi_{cd})_{\oP}(\ooo{a}{b})_{\oP}\ot(\xi_{cd})_{\oEnd{A}}(\ooo{a}{b})_{\oEnd{A}})(\theta'_1\ot\theta'_2\ot\theta'_1\ot\theta'_2) = \\
= -((\xi_{ab})_{\oP}(\ooo{c}{d})_{\oP}\ot(\xi_{ab})_{\oEnd{A}}(\ooo{c}{d})_{\oEnd{A}})(\theta'_1\ot\theta'_2\ot\theta'_1\ot\theta'_2),
\end{gather*}
where the first equality is justified by equivariance of $\ooo{}{}$ and $\xi$, the second one by associativity.
Thus the middle term of \eqref{EQDeltaBracketCalc} vanishes.

As for \eqref{EQDeltaBracketCalc}, a straightforward calculation yields
\begin{gather*}
\{\}(\Delta\ot\id) = \\ 
= \sum_{\substack{|C_1|=n_1-3\\ |C_2|=n_2-1}} ((\ooo{a}{b})_{\oP}((\xi_{cd})_{\oP}\ot\id)\ot(\ooo{a}{b})_{\oEnd{A}}((\xi_{cd})_{\oEnd{A}}\ot\id))(\overline{\theta_1}\ot\overline{\theta_2}\ot\overline{\theta_1}\ot\overline{\theta_2})\tau
\end{gather*}
for arbitrary bijections $\overline{\theta_1}:[n_1]\to C_1\sqcup\{a,c,d\}$ and $\overline{\theta_2}:[n_2]\to C_2\sqcup\{b\}$.
Let $\sigma_1$ map $a\mapsto c,\ c \mapsto a,\ d\mapsto b$ and leave $C_1$ intact.
Let $\sigma_2$ map $b\mapsto d$ and leave $C_2$ intact.
Equivariance justifies the relabeling occurring below from the first to the second line:
\begin{gather}
((\ooo{a}{b})_{\oP}((\xi_{cd})_{\oP}\ot\id)\ot(\ooo{a}{b})_{\oEnd{A}}((\xi_{cd})_{\oEnd{A}}\ot\id))(\overline{\theta_1}\ot\overline{\theta_2}\ot\overline{\theta_1}\ot\overline{\theta_2}) = \nonumber \\
= ((\ooo{c}{d})_{\oP}((\xi_{ab})_{\oP}\ot\id)\ot(\ooo{c}{d})_{\oEnd{A}}((\xi_{ab})_{\oEnd{A}}\ot\id))(\sigma_1\overline{\theta_1}\ot\sigma_2\overline{\theta_2}\ot\sigma_1\overline{\theta_1}\ot\sigma_2\overline{\theta_2}) = \label{EQBVDeltaBracketClacII} \\
= -((\xi_{ab})_{\oP}(\ooo{c}{d})_{\oP}\ot(\xi_{ab})_{\oEnd{A}}(\ooo{c}{d})_{\oEnd{A}})(\theta_1\ot\theta_2\ot\theta_1\ot\theta_2), \nonumber
\end{gather}
where $\theta_1:=\sigma_1\overline{\theta_1}:[n_1]\to C_1\sqcup\{a,b,c\}$ and $\theta_2:=\sigma_2\overline{\theta_2}:[n_2]\to C_2\sqcup\{d\}$.
So $\{\}(\Delta\ot\id)$ cancels with the first term of \eqref{EQDeltaBracketCalc}.
Likewise, $\{\}(\id\ot\Delta)$ cancels with the third term.

To prove the Jacobi identity \eqref{EQBVJacobi}, let $f\in P(n_1,G_1), g\in P(n_2,G_2), c\in P(n_3,G_3)$.
Then $\{\{f,g\},h\}$ can be expressed as a sum of terms of two types:
$$
\begin{tikzpicture}[baseline=-\the\dimexpr\fontdimen22\textfont2\relax]
\filldraw (-1,0) circle (1pt) node[below]{$f$};
\filldraw (0,0) circle (1pt) node[below]{$g$};
\filldraw (1,0) circle (1pt) node[below]{$h$};
\draw (-1,0) -- (0,0);
\draw (-1,0) .. controls (-1,1) and (1,1) .. (1,0);
\draw (-.5,.1) -- (-.5,-.1);
\draw (0,.85) -- (0,.65);
\node at (-.7,-.2) {$\scriptstyle{c}$};
\node at (-.3,-.2) {$\scriptstyle{d}$};
\node at (-.7,.8) {$\scriptstyle{a}$};
\node at (.7,.8) {$\scriptstyle{b}$};
\end{tikzpicture}
\qquad
\begin{tikzpicture}[baseline=-\the\dimexpr\fontdimen22\textfont2\relax]
\filldraw (-1,0) circle (1pt) node[below]{$f$};
\filldraw (0,0) circle (1pt) node[below]{$g$};
\filldraw (1,0) circle (1pt) node[below]{$h$};
\draw (-1,0) -- (0,0) -- (1,0);
\draw (-.5,.1) -- (-.5,-.1);
\draw (.5,.1) -- (.5,-.1);
\node at (-.7,-.2) {$\scriptstyle{c}$};
\node at (-.3,-.2) {$\scriptstyle{d}$};
\node at (.7,-.2) {$\scriptstyle{b}$};
\node at (.3,-.2) {$\scriptstyle{a}$};
\end{tikzpicture}
$$
Proceeding formally, a tedious but straightforward calculation shows that
\begin{gather}
\{\{f,g\},h\} = \label{EQDoubelBracket} \\
= \sum_{\mathclap{\substack{|C_1|=n_1-2,\\ |C_2|=n_2-1,\\ |C_3|=n_3-1}}} \left( (\ooo{a}{b})_{\oP}((\ooo{c}{d})_{\oP}\ot\id)(\theta_1\!\ot\!\theta_2\!\ot\!\theta_3) \ot (\ooo{a}{b})_{\oEnd{A}}((\ooo{c}{d})_{\oEnd{A}}\ot\id)(\theta_1\!\ot\!\theta_2\!\ot\!\theta_3) \right) \psi (f\!\ot\! g\!\ot\! h) +{} \nonumber \\
{}+ \sum_{\mathclap{\substack{|C_1|=n_1-1,\\ |C_2|=n_2-2,\\ |C_3|=n_3-1}}} \left( (\ooo{a}{b})_{\oP}((\ooo{c}{d})_{\oP}\ot\id)(\theta'_1\!\ot\!\theta'_2\!\ot\!\theta'_3) \ot (\ooo{a}{b})_{\oEnd{A}}((\ooo{c}{d})_{\oEnd{A}}\ot\id)(\theta'_1\!\ot\!\theta'_2\!\ot\!\theta'_3) \right) \psi (f\!\ot\! g\!\ot\! h), \nonumber
\end{gather}
where the summations run over all partitions $C_1\sqcup C_2\sqcup C_3=[n_1+n_2+n_3-4]$, the permutations $\theta_1:[n_1]\to C_1\sqcup\{a,c\}$ (or $\theta'_1:[n_1]\to C_1\sqcup\{c\}$ in the second sum), $\theta_2:[n_2]\to C_2\sqcup\{d\}$ (or $\theta'_2:[n_2]\to C_2\sqcup\{a,d\}$ in the second sum) and $\theta_3:[n_3]\to C_3\sqcup\{b\}$ are arbitrary and $\psi$ is the monoidal symmetry 
\begin{gather*}
\oP(n_1,G_1)\ot\oEnd{A}(n_1,G_1)\ot\oP(n_2,G_2)\ot\oEnd{A}(n_2,G_2)\ot\oP(n_3,G_3)\ot\oEnd{A}(n_3,G_3) \to \\
\to \oP(n_1,G_1)\ot\oP(n_2,G_2)\ot\oP(n_3,G_3)\ot\oEnd{A}(n_1,G_1)\ot\oEnd{A}(n_2,G_2)\ot\oEnd{A}(n_3,G_3).
\end{gather*}
Denote $F_\id$ resp. $S_\id$ the terms in the first resp. second sum in \eqref{EQDoubelBracket}, i.e.
$$\{\}(\{\}\ot\id)(f\ot g\ot h) = F_\id + S_\id.$$
Let $\sigma=\perm{1 & 2 & 3}{2 & 3 & 1}\in\Sigma_3$ be the cyclic permutation and similarly let
\begin{gather*}
\{\}(\{\}\ot\id)\sigma(f\ot g\ot h) = F_\sigma + S_\sigma,\\
\{\}(\{\}\ot\id)\sigma^2(f\ot g\ot h) = F_{\sigma^2} + S_{\sigma^2}.
\end{gather*}
We want to prove $\{\}(\{\}\ot\id)(\id+\sigma+\sigma^2)(f\ot g\ot h)=0$.
For $F_\id$, let $(\theta_1\ot\theta_1)f=\sum_i f'_i\ot f''_i$ for some $f'_i\in\oP(C_1\sqcup\{a,c\},G_1), f''_i\in\oEnd{A}(C_1\sqcup\{a,c\},G_2)$ and similarly for $g$ and $h$.
By axioms of operad,
$$\ooo{a}{b}(\ooo{c}{d}\ot\id)(f'_i\ot g'_i\ot h'_i) = \ooo{b}{a}(\id\ot\ooo{c}{d})\sigma(f'_i\ot g'_i\ot h'_i) = \ooo{c}{d}(\ooo{b}{a}\ot\id)\sigma(f'_i\ot g'_i\ot h'_i).$$
Similarly
$$\ooo{a}{b}(\ooo{c}{d}\ot\id)(f''_i\ot g''_i\ot h''_i) = -\ooo{c}{d}(\ooo{b}{a}\ot\id)\sigma(f''_i\ot g''_i\ot h''_i).$$
The indices $a,b,c,d$ along which we contract can be arbitrarily renamed as in \eqref{EQBVDeltaBracketClacII}.
This shows that $F_\id=-S_\sigma$.
Similarly, $F_\sigma=-S_{\sigma^2}$ and $F_{\sigma^2}=-S_\id$.
\end{proof}

\begin{remark}
In physical literature it is customary to introduce a formal parameter $\hbar$ whose powers bookkeep the genus.
E.g. we might redefine $P:=\prod_{\substack{n\geq 0\\ G\geq 0}} \hbar^G P(n,G)$ and extend all BV operations $\fld[[\hbar]]$-linearly; this forces us to replace $\Delta$ by $\hbar\Delta$.
We won't use this convention except in the special case of Section \ref{SECMEQC}.
\end{remark}

Now we will rewrite $P$ and the BV operations into a different form.
We note that we are using the convention of Theorem \ref{THMEndIsOperad}, which means that $\oEnd{A}(n,G)$ is $\op{Hom}_{\fld}(A^{\ot n},\fld)$ rather than $\op{Hom}_{\fld}(\bigotimes_{[n]}A,\fld)$.
There is an iso
$$\op{Hom}_{\fld}(A^{\ot n},\fld) \cong {A^{\#}}^{\ot n}$$
and, since we are in characteristics $0$, there is an iso between invariants and coinvariants
$$\left(\oP(n,G)\ot\oEnd{A}(n,G)\right)^{\Sigma_n} \cong \left(\oP(n,G)\ot\oEnd{A}(n,G)\right)_{\Sigma_n} = \oP(n,G)\ot_{\Sigma_n}\oEnd{A}(n,G).$$
%\todo[inline]{The notation $\ot_{\Sigma_n}$ usually assumes \emph{right} action on $\oP$ and left action on $\oEnd{A}$. Here we have $\sigma\cdot p \ot_{\Sigma_n} \sigma\cdot\alpha(p) = p \ot_{\Sigma_n}\alpha(p)$, so when we convert the left action to the right action (in the standard way) on $\oP$, we get the usual $p\cdot\sigma^{-1} \ot_{\Sigma_n} \sigma\cdot\alpha(p) = p \ot_{\Sigma_n}\alpha(p)$.}
Composing these two, we obtain
\begin{align}
\left(\oP(n,G)\ot\oEnd{A}(n,G)\right)^{\Sigma_n} &\cong \oP(n,G)\ot_{\Sigma_n}{A^{\#}}^{\ot n}, \nonumber \\
\sum_i p_i\ot f_i &\mapsto \frac{1}{n!}\sum_{i,I} p_i \ot_{\Sigma_n} f_i(a_I)\phi^I, \label{EQIsoYetOneMoreOhh} \\
\sum_{\sigma\in\Sigma_n}\oP(\sigma)p\ot\oEnd{A}(\sigma)\phi^I &\mapsfrom p\ot_{\Sigma_n}\phi^I, \nonumber %&\reflectbox{\mapsto} p\ot_{\Sigma_n}\phi^I,
\end{align}
where $I:=I_1\cdots I_n:=(I_1,\ldots,I_n)$ runs over all multiindices in $[\dim A]^{\times n}$, $a_I:=a_{I_1}\ot\cdots\ot a_{I_n} \in A^{\ot n}$, $\{\phi^i\}$ is the basis dual to $\{a_i\}$ and $\phi^I:=\phi^{I_1}\ot\cdots\ot\phi^{I_n} \in {A^{\#}}^{\ot n}$.
In the bottom left, we have also denoted by $\phi^I$ the map $A^{\ot n}\to\fld$ given by
$$\phi^I(a_J)=\delta^I_J \quad \textrm{for any }a_J\in A^{\ot n}.$$
We will use this notation in the sequel; it will always be clear from the context which of the two meanings we have in mind.
Denote
\begin{gather*}
\tilde{P}(n,G) := \oP(n,G) \ot_{\Sigma_n} {A^{\#}}^{\ot n} \\
\tilde{P} := \prod_{n,G} \tilde{P}(n,G).
\end{gather*}
Then $P\cong\tilde{P}$ as vector spaces and we transfer the BV operations to $\tilde{P}$:
let $Y$ be the iso \eqref{EQIsoYetOneMoreOhh}, then $d$ and $\Delta$ are transferred along 
\begin{equation*} %\label{DGQC}
\begin{tikzpicture} [baseline=-\the\dimexpr\fontdimen22\textfont2\relax]
\matrix (m) [matrix of math nodes, row sep=3em, column sep=2.5em, text height=1.5ex, text depth=0.25ex]
{ P(n+2,G) & & \tilde{P}(n+2,G) \\
P(n,G+1) & & \tilde{P}(n,G+1) \\ };
\path[->,font=\scriptsize] (m-1-3) edge node[above]{$Y^{-1}$} (m-1-1);
\path[->,font=\scriptsize] (m-1-1) edge node[left]{$\Delta$} (m-2-1);
\path[->,dashed,font=\scriptsize] (m-1-3) edge node[right]{$\Delta$} (m-2-3);
\path[->,font=\scriptsize] (m-2-1) edge node[above]{$Y$} (m-2-3);
\end{tikzpicture}
\end{equation*}
while $\{\}$ along
\begin{equation*} %\label{DGQC}
\begin{tikzpicture} [baseline=-\the\dimexpr\fontdimen22\textfont2\relax]
\matrix (m) [matrix of math nodes, row sep=3em, column sep=2.5em, text height=1.5ex, text depth=0.25ex]
{ P(n_1+1,G_1)\ot P(n_2+1,G_2) & & \tilde{P}(n_1+1,G_1)\ot\tilde{P}(n_2+1,G_2) \\
P(n_1+n_2,G_1+G_2) & & \tilde{P}(n_1+n_2,G_1+G_2) \\ };
\path[->,font=\scriptsize] (m-1-3) edge node[above]{$Y^{-1}\ot Y^{-1}$} (m-1-1);
\path[->,font=\scriptsize] (m-1-1) edge node[left]{$\{\}$} (m-2-1);
\path[->,dashed,font=\scriptsize] (m-1-3) edge node[right]{$\{\}$} (m-2-3);
\path[->,font=\scriptsize] (m-2-1) edge node[above]{$Y\ot Y$} (m-2-3);
\end{tikzpicture}
\end{equation*}
%\begin{equation*} %\label{DGQC}
%\begin{tikzpicture} [baseline=-\the\dimexpr\fontdimen22\textfont2\relax]
%\matrix (m) [matrix of math nodes, row sep=3em, column sep=2.5em, text height=1.5ex, text depth=0.25ex]
%{ P(n+2,G) & & \tilde{P}(n+2,G) & & P(n_1+1,G_1)\ot P(n_2+1,G_2) & & \tilde{P}(n_1+1,G_1)\ot\tilde{P}(n_2+1,G_2) \\
%P(n,G+1) & & \tilde{P}(n,G+1) & & P(n_1+n_2,G_1+G_2) & & \tilde{P}(n_1+n_2,G_1+G_2) \\ };
%\path[->,font=\scriptsize] (m-1-1) edge node[above]{$\cong$} (m-1-3);
%\path[->,font=\scriptsize] (m-1-1) edge node[left]{$\Delta$} (m-2-1);
%\path[->,dashed,font=\scriptsize] (m-1-3) edge node[right]{$\Delta$} (m-2-3);
%\path[->,font=\scriptsize] (m-2-1) edge node[above]{$\cong$} (m-2-3);
%\path[->,font=\scriptsize] (m-1-5) edge node[above]{$\cong$} (m-1-7);
%\path[->,font=\scriptsize] (m-1-5) edge node[left]{$\{\}$} (m-2-5);
%\path[->,dashed,font=\scriptsize] (m-1-7) edge node[right]{$\{\}$} (m-2-7);
%\path[->,font=\scriptsize] (m-2-5) edge node[above]{$\cong$} (m-2-7);
%\end{tikzpicture}
%\end{equation*}
The formulas can be conveniently expressed in terms of ``positional derivations'':
Let
$$\frac{\partial^{(i)}}{\partial \phi^j} \phi^{I_1\cdots I_m} := (-1)^{\dg{\phi^j}(\dg{\phi^{I_1}}+\cdots+\dg{\phi^{I_{i-1}}})} \delta^{I_i}_j \phi^{I_1\cdots\hat{I_i}\cdots I_m}$$
define a linear map ${A^{\#}}^{\ot m}\to{A^{\#}}^{\ot m-1}$.

\begin{lemma} \label{LEMBarannikovsIsoFormulas}
\begin{gather*}
d(p\ot_{\Sigma_n}\phi^I) = d_{\oP([n],G)}p\ot_{\Sigma_n}\phi^I + (-1)^{1+\dg{p}+\dg{\phi^I}}p\ot_{\Sigma_n}d_{{A^\#}^{\ot n}}\phi^I, \\
\Delta(p\ot_{\Sigma_{n+2}}\phi^I) = 2\sum_{\mathclap{\substack{1\leq i<j\leq n+2\\ d,e}}} \overline{\xi}_{ij}(p) \ot_{\Sigma_n} \frac{\partial^{(i)}}{\partial \phi^d} \frac{\partial^{(j)}}{\partial \phi^e} \phi^I \ (-1)^{\dg{\phi^I}+\dg{a_e}} \omega^{de}, \\
\{p\ot_{\Sigma_{n_1+1}}\phi^I,q\ot_{\Sigma_{n_2+1}}\phi^J\} = \\
= \sum_{\mathclap{\substack{1\leq i\leq n_1+1\\ 1\leq j\leq n_2+1\\ d,e}}} \ooot{i}{j}(p\ot q) \ot_{\Sigma_{n_1+n_2}} \frac{\partial^{(i)}}{\partial \phi^d} \phi^I \ot \frac{\partial^{(j)}}{\partial \phi^e} \phi^J \ \omega^{de} (-1)^{\dg{a_d}(\dg{\phi^I}+1)+\dg{\phi^I}\dg{q}+\dg{\phi^J}+\dg{\phi^I}\dg{\phi^J}+1}.
\end{gather*}
\end{lemma}

%\todo[inline]{Additionally, write the formula for $\Delta$ without assuming $i<j$!}

\begin{proof}
We do the computation for $\Delta$ and leave the other formulas for the reader.
\begin{gather} \label{EQTransfDeltaI}
\Delta(p\ot_{\Sigma_{n+2}}\phi^I) = \frac{1}{n!}\sum_{\substack{J\\ \sigma\in\Sigma_{n+2}}} \xi_{ab}\oP(\theta\sigma)(p) \ot_{\Sigma_n} \xi_{ab}\oEnd{A}(\theta\sigma)(\phi^I)(a_J)\phi^J,
\end{gather}
where $|I|=n+2$, $|J|=n$ and $\sigma:[n+2]\to[n]\sqcup\{a,b\}$ is an arbitrary bijection.
By definitions,
$$\xi_{ab}\oP(\theta\sigma)(p) = \oP(\theta\sigma{\rho^{ij}}^{-1}) \overline{\xi}_{ij}(p),$$
where $i:=(\theta\sigma)^{-1}(a)$, $j:=(\theta\sigma)^{-1}(b)$.
Denote $\kappa:=\theta\sigma{\rho^{ij}}^{-1}$.
Similarly
\begin{gather*}
\xi_{ab}\oEnd{A}(\theta\sigma)(\phi^I) = \oEnd{A}(\kappa)\overline{\xi}_{ij}(\phi^I) = \overline{\xi}_{ij}(\phi^I)\kappa^{-1}.
\end{gather*}
Consequently \eqref{EQTransfDeltaI} equals
\begin{gather}
\frac{1}{n!}\sum_{J,\sigma} \oP(\kappa) \overline{\xi}_{ij}(p) \ot_{\Sigma_n} \overline{\xi}_{ij}(\phi^I)(\kappa^{-1}a_J)\phi^J = \nonumber \\
= \frac{1}{n!}\sum_{J,\sigma} \overline{\xi}_{ij}(p) \ot_{\Sigma_n} \overline{\xi}_{ij}(\phi^I)(\kappa^{-1}a_J) \kappa^{-1}\phi^J = \nonumber \\
= \frac{1}{n!}\sum_{J,\sigma} \overline{\xi}_{ij}(p) \ot_{\Sigma_n} \overline{\xi}_{ij}(\phi^I)(a_J)\phi^J. \label{EQTransfDeltaII}
\end{gather}
The last equality is justified as follows: $\kappa^{-1}a_J=\pm a_{\kappa^{-1}J}$ and $\kappa^{-1}\phi^J=\pm\phi^{\kappa^{-1}J}$ with the same sign; thus we can shift the summation index $J$ to $\kappa^{-1}J$.
By definitions,
\begin{gather}
\overline{\xi}_{ij}(\phi^I)(a_J) = \nonumber \\
= \sum_{d,e} (-1)^{\dg{\phi^I}+\dg{a_e}} \omega^{de} \phi^I(\cdots\ot\underbrace{a_d}_{\mathclap{i\textrm{-th}}}\ot\cdots\ot\underbrace{a_e}_{\mathclap{j\textrm{-th}}}\ot\cdots)(a_J) = \nonumber \\
= \sum_{d,e} \omega^{de} \delta^I_{J'} (-1)^{\dg{\phi^I}+\dg{a_e}+\epsilon}, \label{EQKoszulAuxilia}
\end{gather}
where $J':=J_1 \cdots J_{i-1} d J_{i} \cdots J_{j-1} e J_{j} \cdots J_{n}$ is obtained from $J$ by inserting $d, e$ into positions $i,j$; and $(-1)^\epsilon$ is the Koszul sign of transforming $a_{deJ}$ to $a_{J'}$.
Substituting this into \eqref{EQTransfDeltaII} yields
$$\frac{1}{n!}\sum_{\sigma} \overline{\xi}_{ij}(p) \ot_{\Sigma_n} \phi^{I_1\cdots \hat{I}_{i}\cdots \hat{I}_{j}\cdots I_{n+2}} \omega^{I_{i}I_{j}} (-1)^{\dg{\phi^I}+\dg{a_{I_{j}}}+\epsilon}.$$
We can rewrite this expression in terms of the positional derivations:
$$\frac{\partial^{(i')}}{\partial \phi^{I_{i}}} \frac{\partial^{(j)}}{\partial \phi^{I_{j}}} \phi^I = (-1)^{\epsilon} \phi^{I_1\cdots\hat{I}_{i}\cdots\hat{I}_{j}\cdots I_{n+2}},$$
where we wish to derivate at positions $i$ and $j$ of $\phi^I$, so the position index of the LHS derivation is $i':=i$ if $i<j$ and $i':=i-1$ otherwise.
The sign $(-1)^\epsilon$ is indeed the one from \eqref{EQKoszulAuxilia}.
It it easy to verify, using the fact that $\omega$ has degree $1$, that the sign coincides in both cases $i<j$ and $i>j$.
We have obtained
\begin{gather*}
\frac{1}{n!}\sum_{\sigma} \overline{\xi}_{ij}(p) \ot_{\Sigma_n} \frac{\partial^{(i')}}{\partial \phi^{I_{i}}} \frac{\partial^{(j)}}{\partial \phi^{I_{j}}} \phi^I \ (-1)^{\dg{\phi^I}+\dg{a_{I_{j}}}} \omega^{I_{i}I_{j}}
\end{gather*}
In this formula, the only dependence of the summands on $\sigma$ is through $i=(\theta\sigma)^{-1}(a)$ and $j=(\theta\sigma)^{-1}(a)$.
Thus we replace $\sum_{\sigma\in\Sigma_{n+2}}$ by $n!\sum_{\substack{1\leq i,j\leq n+2\\ i\neq j}}$ to obtain:
\begin{gather*}
\sum_{\substack{1\leq i,j\leq n+2\\ i\neq j}} \overline{\xi}_{ij}(p) \ot_{\Sigma_n} \frac{\partial^{(i')}}{\partial \phi^{I_i}} \frac{\partial^{(j)}}{\partial \phi^{I_j}} \phi^I \ (-1)^{\dg{\phi^I}+\dg{a_{I_j}}} \omega^{I_iI_j} = \\
= \sum_{\substack{1\leq i,j\leq n+2\\ i\neq j\\ d,e}} \overline{\xi}_{ij}(p) \ot_{\Sigma_n} \frac{\partial^{(i')}}{\partial \phi^d} \frac{\partial^{(j)}}{\partial \phi^e} \phi^I \ (-1)^{\dg{\phi^I}+\dg{a_e}} \omega^{de}.
\end{gather*}
Finally, the summands are invariant under the exchange of $i,j$, since $\frac{\partial^{(i')}}{\partial \phi^d} \frac{\partial^{(j)}}{\partial \phi^e} = \frac{\partial^{(j')}}{\partial \phi^e} \frac{\partial^{(i)}}{\partial \phi^d}$ unless $\omega^{de}=0$; and since $(-1)^{\dg{a_e}}\omega^{de}=(-1)^{\dg{a_d}}\omega^{ed}$.
The conclusion follows.
\end{proof}

%%%%%%%%%%%%%%%%%%%
%%%%%%%  ModComm   %%%%%%%
\section{\texorpdfstring{The operad $\oQC$ and related algebraic structures}{The operad QC and related algebraic structures}} \label{SECTQC}

Solutions of Zwiebach's Master Equation \cite{ZwiebachClosed} for closed string theory are equivalently described by a collection of multilinear maps satisfying certain properties.
The resulting algebraic structure is called loop homotopy Lie algebra in \cite{MarklLoop}.
The complicated axioms are succinctly described by operads.
In this section, we rephrase the results of M. Markl \cite{MarklLoop} along these lines in our formalism.

%%%%%%%%%%
\subsection{\texorpdfstring{The modular operad $\oQC$}{The modular operad QC}}
%%%%%%%%%%

\medskip

We define the modular operad $\oQC$, called Quantum Closed operad, to consist of homeomorphism classes of connected $2$-dimensional compact orientable surfaces with labeled boundary components.
The homeomorphism class is determined by the genus of the surface, hence, for example, $\oQC([3],1)$ is generated by
$$\PICA$$
Each boundary component is a circle and surfaces can be glued along these circles:
\begin{gather*}
\PICB \quad = \quad \PICC \\
\PICD \quad = \quad \PICE
\end{gather*}
Bijections acts on surfaces by relabeling the boundary components.
A formal definition follows:

\begin{definition}
For a corolla $(C,G)$, let $\oQC(C,G)$ be just the one dimensional space generated by a symbol $C^G$ of degree $0$:
$$\oQC(C,G) := \Span_{\fld}\left\{ C^G \right\}.$$
The structure operations are defined, for any bijection $\rho:C\xrightarrow{\sim}D$, as follows:
\begin{gather*}
	\oQC(\rho)(C^G) := D^{G} \\
	\ooo{a}{b} \left( (C_1\sqcup\{a\})^{G_1} \ot (C_2\sqcup\{b\})^{G_2} \right) := (C_1\sqcup C_2)^{G_1+G_2} \\
	\xi_{ab}((C\sqcup\{a,b\})^{G}) := C^{G+1}
\end{gather*}
\end{definition}

Obviously, $\oQC$ is a modular operad.

%%%%%%%%%%
\subsection{\texorpdfstring{Loop homotopy algebras}{Loop homotopy algebras}} \label{SECTLoopHomAlg}
%%%%%%%%%%

\medskip

\begin{theorem}[\cite{MarklLoop}, \cite{MarklOperads} Chapter III$.5.7$] \label{THMMarklLoopViaOperads}
The algebras over the Feynman transform $\feyn{\oQC}$ are loop homotopy algebras.
\end{theorem}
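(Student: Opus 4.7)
The plan is to apply Theorem \ref{LEMMAAlgOverFeynTrans} to $\oP=\oQC$ and show that the resulting data and axioms coincide literally with Markl's definition of a loop homotopy algebra. Since $\oQC(C,G)$ is one-dimensional with generator $C^G$, the linear map $\alpha(C,G):\oQC(C,G)^{\#}\to\oEnd{A}(C,G)$ is equivalent to a single element $\ell_{C,G}:=\alpha(C,G)\bigl((C^G)^{\#}\bigr)$. Via Lemma \ref{LEMMAPedantic} applied to $C=[n]$, each $\ell_{C,G}$ corresponds to a multilinear form $\ell_{n,G}:A^{\tp n}\to\K$, defined for every stable $(n,G)$. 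The equivariance condition \eqref{EQAlfOverFTOne} translates, after using Lemma \ref{LEMMAPedantic} and the definition of $\oEnd{A}(\rho)$, to $\Sigma_n$-invariance of $\ell_{n,G}$, i.e.\ graded symmetry in its arguments.

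Next, I would unpack the main identity \eqref{EQAlfOverFTTwo}. As $\oQC$ carries the zero differential, the term $\alpha\comp\partial_{\oP}^{\#}$ drops out. The structure operations on $\oQC$ send a basis vector to a basis vector ($\xi_{ab}\colon(C\sqcup\{a,b\})^G\mapsto C^{G+1}$ and $\ooo{a}{b}\colon(C_1\sqcup\{a\})^{G_1}\tp(C_2\sqcup\{b\})^{G_2}\mapsto(C_1\sqcup C_2)^{G_1+G_2}$), so under the canonical dualisation their duals are the unique maps matching these one-dimensional spaces. Specialising \eqref{EQAlfOverFTTwo} to $C=[n]$ thus produces the quadratic identity
\begin{equation*}
d\,\ell_{n,G}\ =\ (\xi_{ab})_{\oEnd{A}}\,\ell_{n+2,G-1}\ +\ \tfrac{1}{2}\!\!\sum_{\substack{n_1+n_2=n\\ G_1+G_2=G}}\!\!(\ooo{a}{b})_{\oEnd{A}}\bigl(\ell_{n_1+1,G_1}\tp\ell_{n_2+1,G_2}\bigr),
\end{equation*}
appropriately symmetrised, which is exactly the defining axiom of a loop homotopy algebra in \cite{MarklLoop}: the $\xi$ term produces the $\omega^{-1}$-trace that plays the role of the BV operator $\Delta$, while the $\ooo{}{}$ term produces the sum-over-splittings that plays the role of the higher Lie brackets.

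As a cleaner alternative, I would invoke the Barannikov picture of Section \ref{SSECBarannikov}. The operad $\oQC$ is a linear span of a set operad whose orbits are singletons with trivial stabilisers, and it has vanishing differential, so the formulas \eqref{EQGenFctionTilde}, \eqref{EQBVtildeDelta}, \eqref{EQBVtildeBracket}, \eqref{EQBVDiff} apply in their simplest form. The generating function becomes
\begin{equation*}
S\ =\ \sum_{n,G}\frac{\hbar^G}{n!}\sum_{I}\ell_{n,G}(a_I)\,\bigl([n]^G\tp_{\Sigma_n}\phi^I\bigr),
\end{equation*}
and the master equation $d S+\hbar\,\Delta S+\tfrac12\{S,S\}=0$ reduces to precisely Zwiebach's BV master equation on the symmetric algebra on $A^{\#}[[\hbar]]$, which is the generating-functional reformulation of a loop homotopy algebra used in \cite{ZwiebachClosed} and \cite{MarklLoop}.

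The main obstacle is sign and symmetrisation bookkeeping. One must verify that the degree-shift $\uparrow$ on edges built into the Feynman transform, the Koszul signs from the definitions \eqref{EQoo} and \eqref{EQxi} of $(\ooo{a}{b})_{\oEnd{A}}$ and $(\xi_{ab})_{\oEnd{A}}$, and the symmetrisation coming from passing between $\bigotimes_C A$ and $A^{\tp n}$ via $\iota_{\id_{[n]}}$ all combine to match the sign conventions in \cite{MarklLoop}. Once this is carried out, the theorem is an immediate rewriting of Theorem \ref{LEMMAAlgOverFeynTrans} in the special case $\oP=\oQC$.
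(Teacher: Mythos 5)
Your proposal follows essentially the same route as the paper: specialize Theorem~\ref{LEMMAAlgOverFeynTrans} to $\oP=\oQC$, use the one-dimensionality of each $\oQC(C,G)$ and Lemma~\ref{LEMMAPedantic} to reduce to symmetric multilinear forms $f_n^G:A^{\tp n}\to\K$, and unpack~\eqref{EQAlfOverFTTwo} into the quadratic identity of Theorem~\ref{THMLoop}, deferring the final translation (suspension isomorphisms and sign conventions) to Markl. The Barannikov generating-function alternative you sketch is exactly what the paper carries out in Section~\ref{SECMEQC}; the only small imprecisions are that the quadratic sum should run over decompositions $C_1\sqcup C_2=[n]$ (not just $n_1+n_2=n$) and must respect the stability bounds on $G_1,G_2$, which your ``appropriately symmetrised'' is standing in for.
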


For the definition of loop homotopy algebras, we refer the reader to loc. cit.
To prove the theorem, one first makes axioms of algebras over $\feyn{\oQC}$ explicit in terms of operations $V^{\ot n}\to\fld$.
Second, one passes to operations $V^{\ot n-1}\to V$ and uses standard suspension isomorphisms for multilinear maps to translate the axioms.
Here we redo the first step, since it will appear later in a more complicated context.

To lighten the notation, we identify each $\oQC([n],G)^{\#}$ with $\oQC([n],G)$ by identifying $[n]^G$ with its dual.
Applying Lemma \ref{LEMMAHardToUse} and the fact that each $\oQC([n],G)$ is $1$-dimensional, we see that an algebra over $\feyn{\oQC}$ is uniquely determined by a collection 
$$\set{f_n^G:=\alpha([n],G)([n]^G)\in\oEnd{A}(n,G)}{n\geq 0,\ 2(G-1)+n>0}$$
of degree $0$ linear maps satisfying:
\begin{gather}
	\oEnd{A}(\rho)(f_n^G) = f_n^G \quad\textrm{for any }\rho\in\Sigma_n \textrm{ and} \nonumber \\
	d(f_n^G) = (\overline{\xi}_{n+1,n+2})_{\oEnd{A}}\alpha \oQC(\kappa) (\xi_{ab})^{\#}_{\oQC} ([n]^G) +{} \label{EQFeynmanZerothTime} \\
	\mathclap{ {} + \frac{1}{2} \sum_{\mathclap{\substack{{}\\ C_1\sqcup C_2=[n] \\ G_1+G_2=G}}} \oEnd{A}(\rho_1^{1}\kappa_1|_{C_1}\sqcup\rho_2^{1}\kappa_2|_{C_2})^{-1} (\ooot{n_1+1}{n_2+1})_{\oEnd{A}} (\alpha\!\ot\!\alpha) (\oQC(\kappa_1)\!\ot\!\oQC(\kappa_2)) (\oooo{a}{b}{C_1\sqcup\{a\},G_1}{C_2\sqcup\{b\},G_2})^{\#}_{\oQC} [n]^G,} \nonumber
\end{gather}
where we have chosen $\kappa$ to be increasing on $C$ and $\kappa(a)=n+1, \kappa(b)=n+2$; and we have chosen $\kappa_1$ to be increasing on $C_1$ and $\kappa_1(a)=n_1+1$ and similarly for $\kappa_2$.
Observe that $(\rho_1^{n_1+1}\kappa_1|_{C_1}\sqcup\rho_2^{n_2+1}\kappa_2|_{C_2})^{-1}$ is a $(|C_1|,|C_2|)$-shuffle, denote it $\sigma$.

Now we want to express the RHS of \eqref{EQFeynmanZerothTime} in terms of $f_n^G$'s.
We start by calculating the dual of the structure morphisms of $\oQC$:
\begin{gather*}
(\xi_{ab})^{\#}_{\oQC}[n]^G = ([n]\sqcup\{a,b\})^{G-1}, \\
(\oooo{a}{b}{C_1\sqcup\{a\}}{C_2\sqcup\{b\}})^{\#}_{\oQC} [n]^G = \sum_{\substack{G_1,G_2\\ G_1+G_2=G\\ 2(G_1-1)+|C_1|>0\\ 2(G_2-1)+|C_2|>0}} (C_1\sqcup\{a\})^{G_1} \ot (C_2\sqcup\{b\})^{G_2}.
 \end{gather*}
Thus \eqref{EQFeynmanZerothTime} becomes
\begin{gather*}
d(f_n^G) = (\overline{\xi}_{n+1,n+2})_{\oEnd{A}} f_{n+2}^{G-1} + \frac{1}{2} \sum_{\substack{n_1,n_2,\sigma \\ G_1,G_2,d}}  \oEnd{A}(\sigma) (\ooot{n_1+1}{n_2+1})_{\oEnd{A}} (f_{n_1+1}^{G_1}\ot f_{n_2+1}^{G_2}) = \\
= \sum_d f_{n+2}^{G-1}(\id^{\ot n}\ot a_d\ot b_d) + \frac{1}{2} \sum_{\substack{n_1,n_2,\sigma \\ G_1,G_2,d}} (f_{n_1+1}^{G_1}(\id^{\ot n_1}\ot a_d) \cdot f_{n_2+1}^{G_2}(\id^{\ot n_2}\ot b_d)) \circ \sigma^{-1}.
\end{gather*}

We have thus proved:
\begin{theorem}\label{THMLoop}
An algebra over $\feyn{\oQC}$ on a dg symplectic vector space $A$ is equivalently given by a collection
$$\set{f_n^G:A^{\ot n}\to\fld}{n,G\geq 0,\ 2(G-1)+n>0}$$
of degree $0$ completely symmetric linear maps satisfying the equations
%\begin{align*}
%d(f_n^G) = & \sum_{d=1}^{\dim A} f_{n+2}^{G-1}(\id^{\ot n}\ot a_d\ot b_d) + \\
%& + \frac{1}{2} \! \sum_{\substack{n_1+n_2=n \\ \sigma\in\Sh{n_1,n_2}}} \! \sum_{\substack{G_1,G_2\\ G_1+G_2=G\\ 2(G_1-1)+|C_1|>0\\ 2(G_2-1)+|C_2|>0}} \hspace{-1em} \sum_{d=1}^{\dim A} \Big( f_{n_1+1}^{G_1}(\id^{\ot n_1}\ot a_d) \cdot f_{n_2+1}^{G_2}(\id^{\ot n_2}\ot b_d) \Big) \circ \sigma^{-1}.
%\end{align*}
%Evaluated on $x_1,\ldots,x_n\in A$, this reads
\begin{gather*}
d(f_n^G)(x_1,\ldots,x_n) = \sum_{d} \pm f_{n+2}^{G-1}(x_1,\ldots,x_n,a_d,b_d) +{} \\
 {}+ \sum_{\substack{n_1+n_2=n \\ \sigma\in\Sh{n_1,n_2}}} \! \sum_{\substack{G_1,G_2\\ G_1+G_2=G\\ 2(G_1-1)+|C_1|>0\\ 2(G_2-1)+|C_2|>0}} \hspace{-1em} \sum_{d} \pm f_{n_1+1}^{G_1}(x_{\sigma(1)},\ldots,x_{\sigma(n_1)},a_d) \cdot f_{n_2+1}^{G_2}(x_{\sigma(n_1+1)},\ldots,x_{\sigma(n_1+n_2)},b_d)
\end{gather*}
for every $x_1,\ldots,x_n\in A$, where $\pm$ is the Koszul sign of the evaluation $f_{n+2}^{G-1}(\id^{\ot n}\ot a_d\ot b_d)(x_1\ot\cdots\ot x_n)$ resp. $\left( \left( f_{n_1+1}^{G_1}(\id^{\ot n_1}\ot a_d) \cdot f_{n_2+1}^{G_2}(\id^{\ot n_2}\ot b_d) \right) \circ \sigma^{-1} \right) (x_1\ot\cdots\ot x_n)$.
\qed
\end{theorem}

%%%%%%%%%%
\subsection{\texorpdfstring{Relation between $\textbf{Mod}(\oCom)$ and $\oQC$}{Relation between Mod(\oCom) and QC}}
%%%%%%%%%%

The operad $\oQC$ has a nice algebraic interpretation.
First observe that restricting to the genus zero part of the operad $\oQC$ yields the cyclic operad $\oCom$.
Recall that the modular envelope $\textbf{Mod}$ is the left adjoint of the forgetful functor from modular to cyclic operads.

\begin{theorem}[\cite{MarklLoop},\cite{MarklOperads}] \label{THMModCom}
The modular operad $\oQC$ is the modular envelope of the cyclic operad $\oCom$: $$\oQC \cong \oMod{\oCom}.$$
\end{theorem}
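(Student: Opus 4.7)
The plan is to verify directly the universal property of the modular envelope for $\oQC$. Recall that by the adjunction, $\oMod{\oCom}$ is characterized (up to canonical iso) by a cyclic operad morphism $\eta:\oCom\to\oMod{\oCom}|_{G=0}$ such that for every modular operad $\oP$, precomposition with $\eta$ induces a bijection between modular operad morphisms $\oMod{\oCom}\to\oP$ and cyclic operad morphisms $\oCom\to\oP|_{G=0}$. Accordingly, I would produce an $\eta:\oCom\to\oQC|_{G=0}$ and show $\oQC$ with this $\eta$ enjoys the same universal property; uniqueness of the modular envelope then gives the isomorphism.

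First, I would define $\eta$ by sending the one-dimensional generator $1_C\in\oCom(C)$ to $C^0\in\oQC(C,0)$. Since $\oCom$ has one-dimensional components with trivial $\Sigma$-action and trivial cyclic composition, and since the cyclic operations of $\oQC|_{G=0}$ also satisfy $\oQC(\rho)(C^0)=D^0$ and $(C_1\sqcup\{a\})^0\,\ooo{a}{b}\,(C_2\sqcup\{b\})^0=(C_1\sqcup C_2)^0$, this is automatically a morphism of cyclic operads; in fact it is an isomorphism onto $\oQC|_{G=0}$.

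Second, given any cyclic operad morphism $f:\oCom\to\oP|_{G=0}$, I would define the extension $\tilde f:\oQC\to\oP$ by
\[
\tilde f(C^G):=\bigl((\xi_{a_1b_1})_\oP\circ\cdots\circ(\xi_{a_Gb_G})_\oP\bigr)\bigl(f(1_{C\sqcup\{a_1,b_1,\ldots,a_G,b_G\}})\bigr),
\]
for auxiliary labels $a_i,b_i$. Independence of the choice of pairs and of their ordering follows from Axioms $4$ and $5$ of Definition \ref{DEFModOp}. Compatibility with $\oQC(\rho)$ follows from Axiom $4$ for $\oP$; compatibility with $(\xi_{ab})_{\oQC}$ is built in by construction, again using Axiom $5$. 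Uniqueness of such an extension is clear because $\oQC(C,G)$ for $G>0$ is generated from genus-zero elements by iterated $\xi$-contractions, which $\tilde f$ must preserve.

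The main obstacle I anticipate is verifying compatibility of $\tilde f$ with $(\ooo{a}{b})_{\oQC}$: the identity
\[
\tilde f\bigl((C_1\sqcup\{a\})^{G_1}\,\ooo{a}{b}\,(C_2\sqcup\{b\})^{G_2}\bigr)=(\ooo{a}{b})_\oP\bigl(\tilde f((C_1\sqcup\{a\})^{G_1})\tp\tilde f((C_2\sqcup\{b\})^{G_2})\bigr)
\]
requires commuting a composition $\ooo{a}{b}$ past the $\xi$'s used to build both sides. Axioms $6$, $7$, $8$ of Definition \ref{DEFModOp} are exactly designed to perform this commutation, and an induction on $G_1+G_2$ reduces the statement to the base case $G_1=G_2=0$, where it reduces precisely to the fact that $f$ is a cyclic operad morphism. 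This completes the verification of the universal property and thus the isomorphism $\oQC\cong\oMod{\oCom}$.
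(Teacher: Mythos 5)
The paper does not prove this theorem; it is stated as a citation to Markl's work (\cite{MarklLoop} and Chapter III.5.7 of \cite{MarklOperads}), so there is no in-text argument to compare against. Your proposal is a genuine proof by verifying the universal property of the left adjoint, and the overall strategy is sound: $\oQC|_{G=0}\cong\oCom$ is immediate, every $C^G$ is obtained from a genus-zero element by $G$ iterated $\xi$-contractions (which pins down $\tilde f$ and gives uniqueness), and preservation of $\ooo{a}{b}$ reduces, by commuting $\xi$'s past $\ooo{a}{b}$ via Axioms 7 and 1, to the genus-zero case where it is exactly the hypothesis that $f$ is cyclic.

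One place where you should say a bit more is well-definedness of $\tilde f(C^G)$: beyond independence of the order of the $\xi$'s (Axiom 5), you also need independence of the \emph{pairing} of the auxiliary labels, e.g. that $\xi_{a_1b_1}\xi_{a_2b_2}$ and $\xi_{a_1a_2}\xi_{b_1b_2}$ applied to $f(1_{C\sqcup\{a_1,b_1,a_2,b_2\}})$ agree. This does follow, but the mechanism is Axiom 4 combined with the fact that the $\Sigma$-action on $\oCom$ is trivial (so $f(1)=\oP(\rho)f(1)$ for the bijection $\rho$ swapping $b_1\leftrightarrow a_2$), not Axiom 5 alone; your phrase ``independence of the choice of pairs'' should be unpacked along these lines. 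With that filled in, the argument is complete. Compared with the cited source, which constructs $\oMod{\oCom}$ directly as a colimit of $\oCom$-decorated graphs and observes that the decoration collapses each $\oMod{\oCom}(C,G)$ to a line, your approach avoids the explicit graph combinatorics at the cost of a slightly more delicate well-definedness check; the two are complementary, and the universal-property route has the advantage of adapting more readily to Conjecture \ref{CONModAss}, where the graph description of $\oMod{\oAss}$ is considerably harder to control.
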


Theorem \ref{THMModCom} has a physical interpretation:
The cyclic operad $\oCom$ in fact consist of homeomorphism classes of $2$-dimensional orientable surfaces with labeled boundary components and \emph{genus zero} and the composition is gluing of the surfaces along a pair of boundary components.
Thus $\oCom$ is the loopless part of $\oQC$ and algebras over $\oCom$ are classical limits of algebras over $\oQC$.
In other words, the passage from classical to quantum case corresponds to taking modular envelope.

\begin{gather*}
\PICF \in \oCom(\{a,b,c\}) \\
\oQC(\{c\},1) \ni \PICG \quad \cong \quad \xi_{ab}\left(\PICF\right) \in \oMod{\oCom}(\{c\},1)
\end{gather*}

\begin{remark}
Restricting the Feynman transform for modular operads to cyclic cobar complex for cyclic operads and applying it to $\oCom$ yields $L_\infty$ algebras after some suspensions.
Precise relation between loop homotopy Lie and $L_\infty$ algebras is explained in detail in \cite{MarklLoop}.
We will discuss an analogue for the operad $\oAss$ later.
\end{remark}

%%%%%%%%%%
\subsection{\texorpdfstring{Master equation}{Master equation}} \label{SECMEQC}
%%%%%%%%%%

By applying Barannikov's theory of Section \ref{SSECBarannikov}, we can get Zwiebach's master equation \cite{ZwiebachClosed} for closed string theory directly:

Since $\oQC(C,G)$ is the trivial representation $\Span_{\fld} C^{G}$,
\begin{gather} \label{EQSecondIdentificationQC}
\oQC(n,G) \ot_{\Sigma_n} (A^{\#})^{\ot n} \cong S^n(A^{\#}),\textrm{ the }n\textrm{-th symmetric power},
\end{gather}
and $\tilde{P} \cong \widehat{S}(A^{\#}) := \prod_{n,g}S^n(A^{\#})$.
Let's write $\hbar^G \phi^I$ rather than $[n]^G \ot_{\Sigma_n} \phi^I$.
Then $\phi$'s posses a symmetry as if they were graded polynomial variables.

The BV differential is $\hbar$-linear (in the obvious sense) and is determined by
$$d(\phi^I) = (-1)^{1+\dg{\phi^I}} d_{{A^\#}^{\ot n}}\phi^I.$$
To use the formulas of Lemma \ref{LEMBarannikovsIsoFormulas} for $\Delta$ and $\{\}$, we need to make $\overline{\xi}_{ij}$ and $\ooot{i}{j}$ on $\oQC$ explicit:
\begin{gather*}
\overline{\xi}_{ij}([n+2]^G) = [n]^{G+1}, \\
\ooot{i}{j}([n_1+1]^{G_1}\ot [n_2+1]^{G_2}) = [n_1+n_2]^{G_1+G_2}.
\end{gather*}
%Let $I=(I_1,\ldots,I_n)$.
Then $\Delta$ is $\hbar$-linear and
\begin{gather} \label{EQQCDeltaExplicit}
\Delta(\phi^{I_1\cdots I_n}) = 2\hbar\sum_{i<j} \pm \omega^{I_iI_j} \phi^{I_1\cdots\widehat{I_i}\cdots\widehat{I_j}\cdots I_n},
\end{gather}
where the sign consists of $(-1)^{\dg{\phi^I}+\dg{\phi^{I_i}}}$ and the Koszul sign of permutation taking $\phi^{I_1\cdots I_n}$ to $\phi^{I_iI_jI_1\cdots\widehat{I_i}\cdots\widehat{I_j}\cdots I_n}$;
and $\{\}$ is $\hbar$-linear and
\begin{gather} \label{EQQCBracketExplicit}
\{\phi^{I_1\cdots I_p},\phi^{J_1\cdots J_q}\} = \sum_{i,j} \pm \omega^{I_iJ_j} \phi^{I_1\cdots\widehat{I_i}\cdots I_pJ_1\cdots\widehat{J_j}\cdots J_q},
\end{gather}
where the sign consists of $(-1)^{\dg{\phi^{I_i}}(\dg{\phi^I}+1)+\dg{\phi^J}+\dg{\phi^I}\dg{\phi^J}+1}$ and the Koszul sign of permutation taking $\phi^{I_1\cdots I_pJ_1\cdots J_q}$ to $\phi^{I_iI_1\cdots\widehat{I_i}\cdots I_pJ_jJ_1\cdots\widehat{J_j}\cdots J_q}$.
The solutions $S\in\widehat{S}(A^{\#})$ of degree $0$ of the master equation 
$$d(S)+\Delta(S)+\frac{1}{2}\{S,S\}=0$$
thus correspond to algebras over $\feyn{\oQC}$ on $A$.

We wish to compare the above formulas for BV operations to those of Zwiebach in \cite{ZwiebachClosed}.
We introduce left and right derivations %${A^\#}^{\ot n}\to{A^\#}^{\ot n-1}$:
\begin{gather*}
\frac{\partial_L}{\partial \phi^j} \phi^I := \sum_{i=1}^{|I|} \frac{\partial^{(i)}}{\partial \phi^j} \phi^I, \\
\frac{\partial_R}{\partial \phi^j} \phi^I := \sum_{i=1}^{|I|} (-1)^{\dg{\phi^j}\dg{\phi^I}+\dg{\phi^j}\dg{\phi^{I_i}}} \frac{\partial^{(i)}}{\partial \phi^j} \phi^I
\end{gather*}
with respect to the standard $\hbar$-linear commutative multiplication in $\widehat{S}(A^{\#})$.
Observe that $\frac{\partial_R}{\partial \phi^j} \phi^I = (-1)^{\dg{\phi^j}(\dg{\phi^I}+1)} \frac{\partial_L}{\partial \phi^j} \phi^I$.
Thus we obtain
\begin{gather*}
\Delta(\phi^I) = \hbar \sum_{d,e} (-1)^{1+\dg{\phi^I}\dg{a_e}} \omega^{de} \frac{\partial_R}{\partial \phi^d} \frac{\partial_L}{\partial \phi^e}\phi^I, \\
\{\phi^I,\phi^J\} = \sum_{d,e} (-1)^{\dg{\phi^J}+\dg{\phi^I}\dg{\phi^J}+1} \omega^{de} \frac{\partial_R}{\partial \phi^d} \phi^I \cdot \frac{\partial_L}{\partial \phi^e} \phi^J.
\end{gather*}

It is now natural to ask what is the compatibility between the commutative multiplication $\cdot$ and $\Delta,\{\}$.
One easily verifies
$$\Delta(\phi^I\cdot\phi^J)-(-1)^{\dg{\phi^J}}\Delta(\phi^I)\cdot\phi^J-\phi^I\cdot\Delta(\phi^J)=2\{\phi^J,\phi^I\}.$$
Thus in order to get a classical BV algebra, we redefine the operations as follows:
\begin{gather*}
d'(\phi^I):=\frac{1}{2}(-1)^{\dg{\phi^I}}d(\phi^I), \\
\Delta'(\phi^I):=\frac{1}{2}(-1)^{\dg{\phi^I}}\Delta(\phi^I), \\
\{\phi^I,\phi^J\}':=(-1)^{\dg{\phi^I}+\dg{\phi^J}+\dg{\phi^I}\dg{\phi^J}}\{\phi^I,\phi^J\}.
\end{gather*}
The primed operations on $\widehat{S}(A^{\#})$ indeed form a BV algebra, for example
\begin{gather*}
\Delta'(\phi^I\phi^J)-\Delta'(\phi^I)\phi^J-(-1)^{\dg{\phi^I}}\phi^I\Delta'(\phi^J)=\{\phi^I,\phi^J\}', \\
\{\phi^I,\phi^J\phi^K\}'-\{\phi^I,\phi^J\}'\phi^K-(-1)^{(1+\dg{\phi^I})\dg{\phi^J}}\phi^J\{\phi^I,\phi^K\}'=0.
\end{gather*}
Solutions $S'$ of the corresponding master equation
\begin{gather} \label{EQMEPrimed}
d'(S')+\Delta'(S') S+\frac{1}{2}\{S',S'\}'=0
\end{gather}
are then easily seen to be in bijection $S'=\frac{1}{2}S$ with solutions $S$ of the unprimed master equation $dS+\Delta S+\frac{1}{2}\{S,S\}=0$.

Let's specialize to the $2N$-dimensional space $A$ with the standard symplectic form
$$\omega=\left(\begin{matrix} 0 & \id \\ -\id & 0 \end{matrix}\right).$$
%Then
%\begin{gather*}
%\Delta(\phi^I) = 2 \sum_{d=1}^N (-1)^{\dg{\phi^I}(1+\dg{a_d})} \frac{\partial_R}{\partial \phi^{d}}\frac{\partial_L}{\partial \phi^{d+N}}\phi^I, \\
%\{\phi^I,\phi^J\} = (-1)^{\dg{\phi^J}(1+\dg{\phi^I})} \sum_{d=1}^N \left( \frac{\partial_R}{\partial \phi^{d}}\phi^I\cdot\frac{\partial_L}{\partial \phi^{d+N}}\phi^J - \frac{\partial_R}{\partial \phi^{d+N}}\phi^I\cdot\frac{\partial_L}{\partial \phi^{d}}\phi^J \right).
%\end{gather*}
The operations $d',\Delta',\{\}'$ differ from those considered by Zwiebach in \cite{ZwiebachClosed} by signs.
However, \eqref{EQMEPrimed} is precisely the Zwiebach's master equation
$$d'(S')+  \hbar\sum_{d=1}^N \frac{\partial_R}{\partial \phi^{d}}\frac{\partial_L}{\partial \phi^{d+N}}S' + \sum_{d=1}^N \frac{\partial_R}{\partial \phi^{d}}S'\cdot\frac{\partial_L}{\partial \phi^{d+N}}S' =0.$$
The term $d'(S')$ is customary absorbed into $S$ by declaring $S'':=S'+\frac{1}{4}\sum_{i,j}\omega(da_i,a_j)\phi^i \phi^j$.
Then the master equation reads just $\hbar \sum_{d=1}^N \frac{\partial_R}{\partial \phi^{d}}\frac{\partial_L}{\partial \phi^{d+N}}S'' + \sum_{d=1}^N \frac{\partial_R}{\partial \phi^{d}}S''\cdot\frac{\partial_L}{\partial \phi^{d+N}}S'' =0$.
Notice that $S''$ is obtained from $S'$ by allowing the term $f_2^0$ proportional to $\omega(d\ot\id)$, corresponding to an ``unstable corolla''.

%%%%%%%%%%%%%%%%%%%
%%%%%%%  QO   %%%%%%%
\section{\texorpdfstring{The operad $\oQO$ and related algebraic structures}{QO and related algebraic structures}} \label{sec:modass}

%%%%%%%%% definition QO
\subsection{\texorpdfstring{The modular operad $\oQO$}{The modular operad QO}}

The modular operad $\oQO$ consists of homeomorphism classes of connected compact $2$-dimensional orientable surfaces with labeled marked points on the boundary.
To define the operadic composition, it is convenient to replace each marked point by an interval embedded in the boundary and then glue one edge of a short strip to the interval.
The edge opposite to the one glued to the interval is called open end (of an open string).
\begin{figure}[h!] \centering
$\PICH \quad \cong \quad \PICI$
\caption{Replacing the marked points by open ends.
Think of the interior of the circle as of hole in a sphere of large diameter.}
\end{figure}
The surface(s) can then be glued along the open ends.
Obviously, only gluing resulting in orientable surfaces are allowed.
Now we proceed formally:

\begin{definition} \label{DEFCycle}
A cycle in a set $C$ is an equivalence class $\cyc{x_1,\ldots,x_n}$ of an $n$-tuple $(x_1,\ldots,x_n)$ of several distinct elements of $C$ under the equivalence $(x_1,\ldots,x_n) \sim \tau(x_1,\ldots,x_n)$, where $\tau\in\Sigma_n$ is the cyclic permutation $\tau(i)=i+1$ for $1\leq i\leq n-1$ and $\tau(n)=1$.
In other words,
$$\cyc{x_1,\ldots,x_n} = \cdots = \cyc{x_{n-i+1},\ldots,x_n,x_1,\ldots,x_{n-i}} = \cdots = \cyc{x_2,\ldots,x_n,x_1}.$$
We call $n$ the length of the cycle.
We also admit the empty cycle $\cyc{}$, which is a cycle in any set.

For a bijection $\rho:C\xrightarrow{\sim}D$ and a cycle $\cyc{x_1,\ldots,x_n}$ in $C$, define a cycle in $D$:
$$\rho\cyc{x_1,\ldots,x_n} := \cyc{\rho(x_1),\ldots,\rho(x_n)}.$$
\end{definition}

\begin{definition}
\begin{gather*}
\oQO(C,G) := \\
\Span_{\fld}\left\{ \{\cc_1,\ldots,\cc_b\}^g \ |\ b\in\N,\ g\in\N_0,\ \cc_i\textrm{'s are cycles in }C,\ \bigsqcup_{i=1}^b\cc_i=C,\ G=2g+b-1 \right\},
\end{gather*}
where $\{\cc_1,\ldots,\cc_b\}^g$ is a symbol of degree $0$, formally being a pair consisting of $g\in\N_0$ and a set of cycles in $C$ with the above properties.
We remind the reader that $\bigsqcup$ means a \emph{disjoint} union, i.e. $i\neq j\Rightarrow \cc_i\cap\cc_j=\emptyset$!
Also recall that $\oQC(C,G)$ is defined only if the stability condition $2(G-1)+|C|>0$ is met.
Equivalently, this is
\begin{gather} \label{EQStabilityQO}
4g+2b-4+|C|>0.
\end{gather}

For a bijection $\rho:C\xrightarrow{\sim}D$, let 
$$\oQO(\rho)(\{\cc_1,\ldots,\cc_b\}^g) := \{\rho(\cc_1),\ldots,\rho(\cc_b)\}^g.$$

Next, we define $\ooo{a}{b} : \oQO(C_1\sqcup\{a\},G_1) \ot \oQO(C_2\sqcup\{b\},G_2) \to \oQO(C_1\sqcup C_2,G_1+G_2)$.
Assume $\cc_i=\cyc{a,x_1,\ldots,x_m}$ is a cycle in $C_1\sqcup\{a\}$ and let $\cd_j=\cyc{b,y_1,\ldots,y_n}$ be a cycle in $C_2\sqcup\{b\}$.
Then
\begin{gather}
\ooo{a}{b}(\{\cc_1,\ldots,\cc_{b_1}\}^{g_1} \ot \{\cd_1,\ldots,\cd_{b_2}\}^{g_2}) := \nonumber \\
\{\cyc{x_1,\ldots,x_m,y_1,\ldots,y_m},\cc_1,\ldots,\widehat{\cc_i},\ldots,\cc_{b_1},\cd_1,\ldots,\widehat{\cd_j},\ldots,\cd_{b_2}\}^{g_1+g_2}. \label{EQDefQOOO}
\end{gather}
\begin{gather*}
\PICQOA \\ = \\ \PICQOB
\end{gather*}

Now we define $\xi_{ab} : \oQO(C\sqcup\{a,b\},G) \to \oQO(C,G+1)$.
Let $\{\cc_1,\ldots,\cc_{b}\}^{g} \in \oQO(C,G)$.
If there are $i<j$ such that $\cc_i=\cyc{a,x_1,\ldots,x_m}$ and $\cc_j=\cyc{b,y_1,\ldots,y_n}$, then define\footnote{In \eqref{EQXiQOOne}, the symbol $b$ in the subscript of $\xi$ and the number $b$ in the subscript of $\cc_i$ are different objects. We use this abuse of notation in the sequel.}
\begin{gather} \label{EQXiQOOne}
\xi_{ab}(\{\cc_1,\ldots,\cc_{b}\}^{g}) := \{\cyc{x_1,\ldots,x_m,y_1,\ldots,y_m},\cc_1,\ldots,\widehat{\cc_i},\ldots,\widehat{\cc_j},\ldots,\cc_{b}\}^{g+1}.
\end{gather}
$$\PICQOC=\PICQOD$$

Otherwise, there is $i$ such that $\cc_i=\cyc{a,x_1,\ldots,x_m,b,y_1,\ldots,y_m}$.
Then define
\begin{gather} \label{EQXiQOTwo}
\xi_{ab}(\{\cc_1,\ldots,\cc_{b}\}^{g}) := \{\cyc{x_1,\ldots,x_m},\cyc{y_1,\ldots,y_m},\cc_1,\ldots,\widehat{\cc_i},\ldots,\cc_{b}\}^{g}.
\end{gather}
$$\PICQOE=\PICQOF$$
\end{definition}

Notice that we allow repeated empty cycles to appear in $\{\cc_1,\ldots,\cc_b\}^g$, for example $\{\cyc{},\cyc{},\cyc{3},\cyc{14},\cyc{25}\}^{2} \in \oQO([5],8)$.
Also notice that $\xi_{ab}$ can produce empty cycles: $\xi_{ab}\{\cyc{a},\cyc{b}\}^{g}=\{\cyc{}\}^{g+1}$ and $\xi_{ab}\{\cyc{ab}\}^{g}=\{\cyc{},\cyc{}\}^g$.
Observe that $$\dim_{\fld}\oQO(C,G) < \infty$$ for any $(C,G)\in\cCo$ and the whole $\oQO$ is of degree $0$.
The reader can now verify:

\begin{theorem}
$\oQO$ is a modular operad. \qed
\end{theorem}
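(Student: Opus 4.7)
Every basis symbol $\{\cc_1,\ldots,\cc_b\}^g$ has degree zero, and the three collections of structure maps defined above send basis symbols to basis symbols. Consequently the verification of Definition \ref{DEFModOp} reduces to purely combinatorial identities on partitions of the label set into cycles, together with an integer bookkeeping for $(g,b)$; no Koszul sign is ever nontrivial. The plan is to work through axioms $1.$--$8.$ in this setting, most of them immediate and a few requiring a brief case split.

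First I would confirm that the three structure maps land in the correct corolla. The labelling data is manifestly preserved (relabelling is a bijection, $\ooo{a}{b}$ only merges two cycles while removing $a$ and $b$, and $\xi_{ab}$ removes $a$ and $b$). For the genus, I compute: for $\ooo{a}{b}$ the output has $g=g_1+g_2$ and $b=b_1+b_2-1$, giving $2g+b-1=G_1+G_2$; for $\xi_{ab}$ when $a,b$ lie in different cycles the output has $(g+1,b-1)$, and when they lie in the same cycle it has $(g,b+1)$, both giving $2g+b=G+1$. Axioms $2.$--$4.$ (functoriality of $\oQO(\rho)$ and equivariance of $\ooo{a}{b}$ and $\xi_{ab}$) follow at once, because each structure map reads its input only through the labels and commutes with relabellings by construction. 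Axiom $1.$ is the cyclic identity $\cyc{x_1,\ldots,x_m,y_1,\ldots,y_n}=\cyc{y_1,\ldots,y_n,x_1,\ldots,x_m}$ of Definition \ref{DEFCycle}; the remaining data is symmetric in the two tensor factors.

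The bulk of the argument sits in axioms $5.$--$8.$, and the key simplification is \emph{locality}: each of $\ooo{a}{b}$ and $\xi_{ab}$ alters only the one or two cycles containing the labels named in the subscript and leaves all other cycles of $\{\cc_1,\ldots,\cc_b\}^g$ untouched. Hence each identity reduces to inspecting at most three cycles. I would dispatch these by case analysis on how the labels $a,b,c,d$ are distributed among cycles: for axiom $5.$ the cases are whether $\{a,b\}$ and $\{c,d\}$ each sit in a common cycle or in two cycles, and (when they do coexist in a single cycle) how they are interleaved; for axioms $6.$ and $7.$ one also tracks whether $\{a,c\}$ and $\{b,d\}$ lie together in their respective inputs; axiom $8.$ is the ordinary operadic associativity, entirely determined by the two cycles $\cyc{a,\ldots}$ and $\cyc{b,\ldots}$ that $\ooo{a}{b}$ merges. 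In every sub-case both sides of the axiom are the same basis symbol after elementary cyclic reshuffling, and the genus counts match by the bookkeeping above (each $\ooo{}{}$ contributes $b\mapsto b-1$, each $\xi$ contributes either $b\mapsto b-1$, $g\mapsto g+1$ or $b\mapsto b+1$, and both sides of each axiom perform the same combination of these moves).

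The main obstacle is the proliferation of sub-cases in axiom $6.$ and, to a lesser extent, axiom $5.$, where one must check that the two orderings of a $\xi$-contraction with either another $\xi$-contraction or an $\ooo{}{}$-composition yield the same cycle data regardless of whether the various labels are spread across distinct cycles or packed into the same cycle. This is where the $\xi_{ab}$-splitting/merging dichotomy ((\ref{EQXiQOOne})--(\ref{EQXiQOTwo})) interacts with the merging rule (\ref{EQDefQOOO}), so one must carefully match which sub-case of $\xi_{ab}$ is triggered on each side. Once one has written out the affected cycles on each side in a canonical cyclic representative, the identity is visible by inspection, but this is the step that consumes the most bookkeeping.
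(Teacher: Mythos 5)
Your proposal is a correct outline of the direct verification the paper delegates to the reader (the theorem is stated with ``The reader can now verify'' and a \qed). The two observations that make the check tractable --- that all signs are trivial because $\oQO$ is concentrated in degree $0$, and that each of $\ooo{a}{b}$, $\xi_{ab}$ acts locally on the one or two cycles holding the named labels --- are exactly the right simplifications, and your genus bookkeeping ($2g+b-1$ is additive under $\ooo{}{}$ and increases by one under $\xi$ in both the merge and split branches) is correct. Two small points worth adding for completeness: first, one should observe that the formula \eqref{EQDefQOOO} for $\ooo{a}{b}$ is well defined, i.e.\ independent of the representative $(a,x_1,\ldots,x_m)$ chosen for the cycle, which holds because the only cyclic rotation fixing the first slot is the identity; second, in axioms $5.$ and $6.$ the case split should explicitly include the degenerate configurations producing empty cycles (e.g.\ $\cyc{a}$ merged with $\cyc{b}$, or $\cyc{a,b}$ contracted), since the paper allows those as basis symbols --- but they cause no new phenomena, as you anticipate.
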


The modular operad $\oQO$ is closely related to the modular operad $\mathbb{S}[t]$ of \cite{BarannikovModopBV}, associated to stable ribbon graphs. 

%%%%%%%%% A_\infty
\subsection{\texorpdfstring{Cyclic $A_\infty$-algebras}{Cyclic A-infinity algebras}} \label{ssec:ainfty}

In Remark \ref{REMSModCyc}, we have already observed that restricting to the $G=0$ part of a modular operad $\oP$ and forgetting $\xi_{ab}$'s yields a cyclic operad.
Similarly, by forgetting the part of the Feynman differential dual to $\xi_{ab}$'s, the Feynman transform $\feyn{\oP}$ becomes the cyclic cobar complex $\cobar{\oP}$ (for noncyclic operads, precise analogue is defined in Chapter $3.1$ of \cite{MarklOperads}; in cyclic case, a variant appears in \cite{GetzlerCycop}).
Expressed in terms of Theorem \ref{LEMMAAlgOverFeynTrans}, algebra over $\cobar{\oP}$ is given by the collection of $\alpha(C,G)$'s satisfying the corresponding equations with terms containing $\xi_{ab}$'s omitted.

Restricting to the $G=0$ part of the modular operad $\oQO$, we obtain the cyclic operad $\oAss$: 
\begin{gather*}
\oAss(C) = \Span_{\fld}\left\{\textrm{all }|C|\textrm{-element cycles in }C\right\}
\end{gather*}
for $|C|\geq 3$ and zero otherwise.
Since $G=0$ implies $g=0$, we omit $G$ and $g$ from the notation, writing just $\cyc{a,b,\ldots}$ instead of $\{\cyc{a,b,\ldots}\}^0$.

We make explicit axioms of algebras over $\cobar{\oAss}$ to get used to our formalism.
Of course, it is well known that we will obtain cyclic $A_\infty$-algebras \cite{GetzlerCycop}.
Still, we treat this calculation in detail since standard references avoid it and since it clarifies more complicated calculations in the subsequent parts of the paper.

To lighten the notation, we identify $\oAss([n])\cong\oAss([n])^{\#}$ by sending sending the basis consisting of all cycles in $[n]$ to its dual basis.
The degree $0$ maps $\alpha(n):\oAss([n])\to\oEnd{A}(n)$ of Lemma \ref{LEMMAHardToUse} are determined by their values on representative of each $\Sigma_n$ orbit; but there is only one orbit and let it be represented by 
$$\cc_n=\cyc{1,2,\ldots,n}.$$
We see that an algebra over $\cobar{\oAss}$ is uniquely determined by a collection
$$\set{f_n:=\alpha(\cc_n)\in\oEnd{A}(n)}{n\geq 3}$$
of degree $0$ linear maps  such that
$$f_n = f_n\circ\tau$$
for all cyclic permutations $\tau\in\Sigma_n$, $n\geq 3$, and
\begin{gather}
	d(f_n) = \label{EQCycliAssFeyn} \\
	\mathclap{ \frac{1}{2} \sum_{\mathclap{\substack{{}\\ C_1\sqcup C_2=[n] \\ G_1+G_2=G}}}  \oEnd{A}(\rho_1^{\kappa_1(a)}\kappa_1|_{C_1}\sqcup\rho_2^{\kappa_2(b)}\kappa_2|_{C_2})^{-1} (\ooot{\kappa_1(a)}{\kappa_2(b)})_{\oEnd{A}} (\alpha\!\ot\!\alpha) (\oAss(\kappa_1)\!\ot\!\oAss(\kappa_2)) (\oooo{a}{b}{C_1\sqcup\{a\},G_1}{C_2\sqcup\{b\},G_2})^{\#}_{\oAss}(\cc_n). } \nonumber
\end{gather}
To choose convenient $\kappa_1$ and $\kappa_2$, we need to understand the dual of composition in $\oAss$.
We immediately see that $(\oooo{a}{b}{C_1\sqcup\{a\}}{C_2\sqcup\{b\}})^{\#}_{\oAss} (\cyc{1,\ldots,n})$ vanishes for most of the decompositions $C_1\sqcup C_2=[n]$.
Think of the cycle $\cyc{1,\ldots,n}$ as $n$ distinct points on a circle.
\begin{figure}[!h] \centering
$$\PICKorbiA \qquad \PICKorbiB$$
\caption{The cycle $\cyc{1,\ldots,n}$ and $\cyc{1,\ldots,n}$ cut into two pieces $C_1$,$C_2$} 
\label{fig:cycle}
\end{figure}
Then the only non-vanishing terms are those where we separate $[n]$ into two pieces $C_1,C_2$ by cutting the circle exactly twice (see Figure \ref{fig:cycle}).
Thus we get
\begin{gather}
\sum_{C_1\sqcup C_2=[n]} (\oooo{a}{b}{C_1\sqcup\{a\}}{C_2\sqcup\{b\}})^{\#}_{\oAss} (\cyc{1,\ldots,n}) \nonumber \\
 = \sum_{s=0}^{n-1} \sum_{l=2}^{n-2} (\hspace{-0.6ex}( a,\underbrace{s+1,\ldots,s+l}_l )\hspace{-0.6ex}) \ot (\hspace{-0.6ex}( b,\underbrace{s+l+1,\ldots,s+n}_{n-l} )\hspace{-0.6ex}), \label{EQDualOfComInAss}
\end{gather}
where the numbers $s+k$ are counted modulo $n$ so that they take values in $[n]$.
The reader may have expected $l$ to run through $0,1,\ldots,n$, but $l=0,1,n-1,n$ violates the stability condition $|C|\geq 3$ from the definition of $\oAss$.
Notice that consequently $(\ooo{a}{b})^{\#}_{\oAss} (\cyc{1,\ldots,n}) =0$ for $n<4$.

So it suffices to restrict to $C_1=\{s+1,\ldots,s+l\}$ and $C_2=\{s+l+1,\ldots,s+n\}$ (counted modulo $n$) as in \eqref{EQDualOfComInAss}.
We choose $\kappa_1:C_1\sqcup\{a\}\to[|C_1|+1]$ so that 
$$\oAss(\kappa_1)(\cyc{a,s+1,\ldots,s+l})=\cyc{1,\ldots,l+1},$$
for example let $\kappa_1$ be given by $\kappa_1(s+k)=k$ ($s+k$ counted modulo $n$) for $1\leq k\leq l$ and $\kappa_1(a)=|C_1|+1$.
Similarly, choose $\kappa_2:C_2\sqcup\{b\}\to[|C_2|+1]$ so that
$$\oAss(\kappa_2)(\cyc{b,s+l+1,\ldots,s+n})=\cyc{1,\ldots,n-l+1},$$
for example $\kappa_2(s+k)=k$ for $1\leq k\leq n-l$ ($s+k$ counted modulo $n$) and $\kappa_2(b)=|C_2|+1$.
This implies
$$(\alpha\ot\alpha) (\oAss(\kappa_1)\ot\oAss(\kappa_2)) (\oooo{a}{b}{C_1\sqcup\{a\},G_1}{C_2\sqcup\{b\},G_2})^{\#}_{\oAss}(\cc_n) = f_{|C_1|+1}\ot f_{|C_2|+1}.$$
Let $\psi$ denote the permutation $\rho_1^{\kappa_1(a)}\kappa_1|_{C_1}\sqcup\rho_2^{\kappa_2(b)}\kappa_2|_{C_2}$ in \eqref{EQCycliAssFeyn}.
%It simply permutes $(1,\ldots,n)$ to $(s+1,\ldots,s+n)$, i.e.
We immediately see that it is the cyclic permutation%$\psi(s+k)=k$ for $1\leq k\leq n$ (with $s+k$ counted modulo $n$).
\begin{gather} \label{EQAooCycDefPsi}
\psi(s+k)=k \textrm{ for } 1\leq k\leq n \quad  (s+k\textrm{ counted modulo }n).
\end{gather}
%Using the cyclic symmetry of $f_n$'s, we then obtain:
Thus:
\begin{theorem} \label{THMAoo}
An algebra over $\cobar{\oAss}$ on a dg symplectic vector space $A$ is equivalently given by a collection
$$\{f_n:A^{\ot n}\to\fld\ |\ n\geq 3\}$$
of degree $0$ cyclically symmetric linear maps, i.e.
$$f_n = f_n \comp \tau,$$
where $\tau$ is the cyclic permutation as in Definition \ref{DEFCycle}.
These maps are required to satisfy
\begin{gather}
d (f_3) = 0  \label{EQAooInDisguise}, \\
d(f_n)(x_1,\ldots,x_n) = \frac{1}{2} \sum_{s=0}^{n-1} \sum_{l=2}^{n-2} \sum_{d=1}^{\dim A} \pm f_{l+1}(x_{s+1},\ldots,x_{s+l},a_d) \cdot f_{n-l+1}(x_{s+l+1},\ldots,x_{s+n},b_d) \nonumber
\end{gather}
for every $x_1,\ldots,x_n\in A$ and $n\geq 4$, where $\pm$ is the Koszul sign of evaluating $\left( \left( f_{l+1}(\id^{\ot l}\ot a_d) \cdot f_{n-l+1}(\id^{\ot n-l+1}\ot b_d) \right) \circ \psi \right) (x_1\ot\ldots\ot x_n)$ (the $\psi$ is defined in \eqref{EQAooCycDefPsi}).
\end{theorem}

Notice that the cyclic symmetry of $f_{l+1}$ allows us to move $a_d$ to an arbitrary input.
Similarly for $b_d$.
This is reflected by the ambiguity of the choice of $\kappa_1,\kappa_2$.

Next, we sketch two equivalent description of the cyclic $A_\infty$ algebras.

First, denote $TA:=\bigoplus_{n\geq 0}A^{\ot n}$ the tensor algebra on $A$.
There are isos
\begin{gather*}
\begin{matrix}
\mathop{Hom}(A^{\ot n},\fld) & \cong & \mathop{Hom}(A^{\ot n},A) & \cong & \mathop{Hom}((\uparrow\!\!A)^{\ot n},\uparrow\!\!A) \\
\omega(1\ot m_n) & \leftrightarrow & m_n & \leftrightarrow & m_n':=\uparrow m_n \downarrow^{\ot n}
\end{matrix}
\end{gather*}
Let $m_1':=\uparrow d\downarrow$ be the differential on $\uparrow\!\!A$.
In terms of these maps $m_n'$ of degree $2-n$, \eqref{EQAooInDisguise} becomes
$$\sum_{i_1+i_2+i_3=n} (-1)^{i_1i_2+i_3} m'_{i_1+i_3+1} \left(\id^{\ot i_1}\ot m'_{i_2}\ot\id^{\ot i_3}\right)=0,$$
which is the usual cyclic $A_\infty$-algebra relation.

Next, we apply the Barannikov's theory of Section \ref{SSECBarannikov} to get a description of cyclic $A_\infty$-algebras in terms of solutions of the master equation.
We proceed as in Section \ref{SECMEQC}:
Recall that the element $\cc_n=\cyc{1,2,\ldots,n}\in\oAss(n)$ is a representative of the single $\Sigma_n$ orbit.
Its stabilizer obviously consists of the cyclic permutations, hence
\begin{gather*}
\oAss(n) \ot_{\Sigma_n} (A^{\#})^{\ot n} \cong (A^{\#})^{\ot n} / \textrm{cyclic permutations} =: \widehat{C}^n(A^{\#}).
\end{gather*}
and $\tilde{P}=\widehat{C}(A^{\#}):=\prod_{n\geq 3} \widehat{C}^n(A^{\#})$.
The BV differential is
\begin{gather} \label{EQQCBVDiff}
d(\cc_n\ot_{\Sigma_n}\phi^I) = \sum_{i=1}^n (-1)^{1+\dg{\phi^I}} \cc_n\ot_{\Sigma_n}d_{{A^{\#}}^{\ot n}}\phi^I.
\end{gather}
To get a formula for the BV bracket, one first easily verifies
\begin{gather*}
\ooot{i}{j}(\cc_{n_1+1}\ot\cc_{n_2+1}) = \\
= \cyc{1,\ldots,i-1,j+n_1,\ldots,n_2+n_1,1+n_1,\ldots,j-1+n_1,i,\ldots,n_1} = \pi_{ij}\cc_{n_1+n_2},
\end{gather*}
%where the dots mean a sequence of numbers increasing by $1$ and 
where $\pi_{ij}\in\Sigma_{n_1+n_2}$ is a permutation of blocks of lengths $i-1, n_1+1-i, j-1, n_2+1-j$ swapping the second and fourth block.
Symbolically,
\begin{gather*}
\PICABGax \ooo{i}{j} \PICABGbx = \PICABGx.
\end{gather*}
Then
\begin{gather}
\{ \cc_{n_1+1} \ot_{\Sigma_{n_1+1}} \phi^I , \cc_{n_2+1} \ot_{\Sigma_{n_2+1}} \phi^J \} = \nonumber \\
= \sum_{i,j,d,e} \pi_{ij}\cc_{n_1+n_2} \ot_{\Sigma_{n_1+n_2}} (-1)^{\dg{a_d}(\dg{\phi^I}+1)+\dg{\phi^J}+\dg{\phi^I}\dg{\phi^J}+1} \ \omega^{de} \frac{\partial^{(i)}}{\partial \phi^d} \phi^I \ot \frac{\partial^{(j)}}{\partial \phi^e} \phi^J =  \label{EQBracketAss} \\
= \sum_{i,j} (-1)^{\dg{a_d}(\dg{\phi^I}+1)+\dg{\phi^J}+\dg{\phi^I}\dg{\phi^J}+1} \ \omega^{I_iJ_j} \cc_{n_1+n_2} \ot_{\Sigma_{n_1+n_2}} \pi_{ij}^{-1} \left( \frac{\partial^{(i)}}{\partial \phi^{I_i}} \phi^I \ot \frac{\partial^{(j)}}{\partial \phi^{J_j}} \phi^J \right) \nonumber
\end{gather}
If one thinks of each $\phi^{I_i}$ in the tensors $\phi^I$ as sitting on a circle at position $i$, the summands above have a direct relation to the combinatorics of $\oAss$:
For brevity of notation, let's write simply $\phi^I$ rather than $\cc_n\ot_{\Sigma_n}\phi^I$.
Then
\begin{gather}
\{\phi^{I_1\cdots I_{n_1+1}},\phi^{J_1\cdots J_{n_2+1}}\} = \sum_{i,j} \pm \ \omega^{I_iJ_j} \phi^{I_1\cdots I_{i-1}J_{j+1}\cdots J_{n_2+1}J_1\cdots J_{j-1}I_{i+1}\cdots I_{n_1+1}} \label{EQQCBVBrac} \\
\Big\{ \hspace{-1em} \PICABGa , \PICABGb \hspace{-1em} \Big\} = \sum_{i,j} \pm \ \omega^{I_iJ_j} \PICABG \nonumber
\end{gather}
The sign $\pm$ consist of the sign in \eqref{EQBracketAss}, the sign of positional derivatives and the Koszul sign of $\pi_{ij}$.
For example, if $\dg{\phi^I}=\dg{\phi^J}=0$ and $I=(I_1,\ldots,I_5), J=(J_1,\ldots,J_4)$, then 
$$\pm = (-1)^{1+\dg{\phi^{I_3}}(\dg{\phi^{I_1}}+\dg{\phi^{I_2}})+\dg{\phi^{J_2}}\dg{\phi^{J_1}}+(\dg{I_4}+\dg{I_5})(\dg{J_1}+\dg{J_3}+\dg{J_4})+\dg{J_1}(\dg{J_3}+\dg{J_4})}.$$

To conclude, solutions $S$ of the master equation
$$d(S)+\frac{1}{2}\{S,S\}=0$$
in $\widehat{C}(A^{\#})$ with operations \eqref{EQQCBVDiff} and \eqref{EQQCBVBrac} are in bijection with cyclic $A_\infty$-algebras on $A$.

%%%%%%%%%%%%%%%%%%%%%%%%%%%%%%%%
%%%%%%%%% quantum A_\infty
\subsection{\texorpdfstring{Quantum $A_\infty$-algebras}{Quantum A-infinity algebras}} \label{SECTQuanAooAlg}

\begin{definition}
A quantum $A_\infty$ algebra is an algebra over $\feyn{\oQO}$.
\end{definition}

%%%%%%%%%

\subsubsection{\texorpdfstring{Axioms for algebras over $\feyn{\oQO}$}{Axioms for algebras over F(QO)}} \label{SECAxiomsForQuantumAoo}

In this section, we make the axioms for these algebras explicit.

We identify $\oQO([n],G)\cong\oQO([n],G)^{\#}$ in the standard way.
To use Lemma \ref{LEMMAHardToUse} efficiently, we first discuss the orbits of $\oQO$:
Recall that a basis of $\oQO([n],G)$ consists of elements
$$\{\cc_1,\ldots,\cc_b\}^g,$$
where $\cc_i$'s are cycles in $[n]$ such that $\bigsqcup_{i=1}^b\cc_i=[n]$ and $G=2g+b-1$.
Denote 
\begin{gather} \label{EQSequenceofBs}
(b_k) := (b_0,b_1,\ldots),
\end{gather}
where $b_k$ is the number of cycles of length $k$.
$(b_k)$ is called the b-sequence of $\cc_1,\ldots,\cc_b$.
Hence 
\begin{gather} \label{EQOrbits}
\sum_{k=0}^\infty b_k=b,\quad \sum_{k=0}^\infty kb_k=n.
\end{gather}
Of course, $(b_k)$ is eventually zero, thus the last two sums contain only finitely many nonzero terms.
Obviously, two elements of the above form belong to the same $\Sigma_n$-orbit in $\oQO([n],G)$ iff their b-sequences coincide.
Conversely, any sequence satisfying \eqref{EQOrbits} and $G=2g+b-1$ determines an orbit\footnote{In fact, if all $\cc_i$'s are nonempty, $\{\cc_1,\ldots,\cc_b\}$ can be seen as a decomposition into independent cycles of a permutation on $[n]$. Then $\Sigma_n$ acts by conjugation and the sequence of lengths of cycles is a familiar invariant of the orbits.} in $\oQO([n],G)$.
Hence given such a $(b_k)$, we choose a representative of the corresponding orbit as follows:
\begin{align}
\overline{(b_k)}^g &:= \{\underbrace{\cc_{1},\ldots,\cc_{b_0}}_{\textrm{length }0\textrm{ cycles}},\underbrace{\cc_{b_0+1},\ldots,\cc_{b_0+b_1}}_{\textrm{length }1\textrm{ cycles}},\ldots,\cc_{b}\}^g = \label{EQOrbitRepresentative} \\
&= \{\underbrace{\emptyset,\emptyset,\ldots,\emptyset}_{b_0},\underbrace{\cyc{1},\cyc{2},\ldots,\cyc{b_1}}_{b_1},\cyc{b_1+1,b_1+2},\cyc{b_1+3,b_1+4},\ldots\}^{g}, \nonumber
\end{align}
where each $\cc_i=\cycb{c_i^1,\ldots,c_i^{|\cc_i|}}$ and these satisfy $\bigsqcup_{i=1}^{b}\cc_{i}=[n]$ and $c_i^{k}<c_j^{l}$ whenever $i<j$ or $i=j$ and $k<l$.
By abuse of notation, we will often write $\cc_i$ meaning a \emph{tuple} $(c_i^1,\ldots,c_i^{|\cc_i|})$ rather than a cycle.

Let $\alpha([n],G):\oQO([n],G)\to\oEnd{A}(n,G)$ be the morphisms determining an algebra over $\feyn(\oQO)$ on $A$.
Lemma \ref{LEMMAHardToUse} implies that this algebra is determined by the maps
$$f^{(b_k),g} := \alpha([n],G)(\overline{(b_k)}^g) \in \op{Hom}(A^{\ot n},\fld)$$
ranging over all $b$-sequences and integers $g\geq 0$.
Recall that $G=2g+b-1$, thus given $b$, $g$ determines $G$ and vice versa.
Lemma \ref{LEMMAHardToUse} lists the axioms which these maps are required to satisfy.
We make these axioms explicit in the sequel.

We first dualize the operad structure maps on $\oQO$.
The empty cycles appearing in the elements of $\oQO$ make formulas look complicated.
The reason is that while you can always distinguish cycles $\cc_i,\cc_j$ of which at least one has nonzero length, you can't distinguish them if both are empty.
This forces us to treat the empty cycles $\cc_1=\cdots=\cc_{b_0}=\emptyset$ separately.
Also, the stability condition \eqref{EQStabilityQO} affects some terms below.

Let's dualize $(\xi_{ab})_{\oQO}$ (Equations \eqref{EQXiQOTwo} and \eqref{EQXiQOOne}) and evaluate on $\overline{(b_k)}^g = \{\cc_1,\ldots,\cc_b\}^g$:
\begin{gather}
(\xi_{ab})^{\#}_{\oQO}(\overline{(b_k)}^g) = (\xi_{ab})^{\#}_{\oQO}(\{\cc_1,\ldots,\cc_b\}^g) = \label{EQDualoFXi} \\
= \sum_{\mathclap{\substack{i,j\\ i\neq j\\ |\cc_i|\neq 0\neq|\cc_j|}}} \sum_{p=0}^{|\cc_i|-1} \sum_{q=0}^{|\cc_j|-1} \{\cycb{ac_i^{p+1}\cdots c_i^{p+|\cc_i|}bc_j^{q+1}\cdots c_j^{q+|\cc_j|}},\cc_1,\ldots\widehat{\cc_i}\ldots\widehat{\cc_j}\ldots,\cc_b\}^{g} +{} \nonumber \\[-20pt]
+ \mathop{\delta}_{b_0>0} \sum_{\substack{j\\ |\cc_j|\neq 0}} \sum_{q=0}^{|\cc_j|-1} \{\cycb{abc_j^{q+1}\cdots c_j^{q+|\cc_j|}},\widehat{\cc_1},\cc_2,\ldots\widehat{\cc_j}\ldots,\cc_b\}^{g} +{} \nonumber \\[-5pt]
+ \mathop{\delta}_{b_0>0} \sum_{\substack{i\\ |\cc_i|\neq 0}} \sum_{p=0}^{|\cc_i|-1} \{\cycb{ac_i^{p+1}\cdots c_i^{p+|\cc_i|}b},\widehat{\cc_1},\cc_2,\ldots\widehat{\cc_i}\ldots,\cc_b\}^{g} +{} \nonumber \\[-5pt]
+ \mathop{\delta}_{\substack{b_0>1\textrm{ and}\\ (b>2\textrm{ or }g>0)}} \{\cyc{ab},\widehat{\cc_1},\widehat{\cc_2},\cc_3,\ldots,\cc_b\}^{g} + \cdots \nonumber
\end{gather}
\begin{gather}
\cdots + \sum_{\substack{m\\ |\cc_m|\neq 0}} \sum_{s=0}^{|\cc_m|-1} \sum_{l=0}^{|\cc_m|} \{\cycm{ac_m^{s+1}\cdots c_m^{s+l}},\cycm{bc_m^{s+l+1}\cdots c_m^{s+|\cc_m|}},\cc_1,\ldots\widehat{\cc_m}\ldots,\cc_b\}^{g-1} +{} \nonumber \\
+ \mathop{\delta}_{b_0>0} \{\cyc{a},\cyc{b},\widehat{\cc_1},\cc_2,\ldots,\cc_b\}^{g-1}, \nonumber
\end{gather}
where the upper indices of $c_i$ are counted modulo $|\cc_i|$ so that their values belong to $[|\cc_i|]$ and analogously for the upper indices of $c_j$ resp. $c_m$;
and $\mathop{\delta}_{X}$ equals $1$ if the lower index condition $X$ is met, otherwise it equals $0$.

Now we dualize $(\ooo{a}{b})_{\oQO}$ (Equation \eqref{EQDefQOOO}).
We are in fact interested in the sum over $C_1,C_2,G_1,G_2$ of the duals of the composition.
A moment's reflection convinces us that
\begin{gather}
\sum_{\substack{G_1,G_2\\ G_1+G_2=G}} \sum_{\substack{C_1,C_2\\C_1\sqcup C_2=[n]}} (\oooo{a}{b}{C_1\sqcup\{a\},G_1}{C_2\sqcup\{b\},G_2})^{\#}_{\oQO} (\{\cc_1,\ldots,\cc_b\}^g) = \label{EQDualOfOO} \\
= \sum_{m=b_0+1}^b \sum_{\substack{I,J\subset [b]-[b_0]\\ I\sqcup J=\\ [b]-[b_0]-\{m\}}} \sum_{e=0}^{b_0} \sum_{\substack{g_1,g_2\\ g_1+g_2=g}} \sum_{s=0}^{|\cc_m|-1} \sum_{l=0}^{|\cc_m|} \nonumber \\
\delta^1 \!\!\cdot\! \{\!\cycm{ac_m^{s+1} \hspace{-.8em}\cdots c_m^{s+l}}\!\!,\cc_1,\ldots,\cc_e,\cc_{i_1},\ldots,\cc_{i_{|I|}}\}^{g_1} \!\ot\! \{\!\cycm{bc_m^{s+l+1}\hspace{-1.5em}\cdots c_m^{s+|\cc_m|}}\!\!,\cc_{e+1},\ldots,\cc_{b_0},\cc_{j_1},\ldots,\cc_{j_{|J|}}\}^{g_2} + \nonumber \\
%+ \hspace{-.7em}\mathop{\delta}_{b_0>0}
+ \sum_{\substack{I,J\subset[b]-[b_0]\\ I\sqcup J=\\ [b]-[b_0]}} \! \sum_{e=0}^{b_0-1} \!\! \sum_{\substack{g_1,g_2\\ g_1+g_2=g}} \hspace{-.8em} \delta^2 \!\!\cdot\! \{\!\cyc{a}\!\!,\cc_1,\ldots,\cc_{e},\cc_{i_1},\ldots,\cc_{i_{|I|}}\}^{g_1} \!\ot\! \{\!\cyc{b}\!\!,\cc_{e+1},\ldots,\cc_{b_0-1},\cc_{j_1},\ldots,\cc_{j_{|J|}}\}^{g_2}, \nonumber
\end{gather}
where $I=\{i_1,\ldots,i_{|I|}\}$ and $J=\{j_1,\ldots,j_{|J|}\}$, the upper indices are counted mod $|\cc_m|$ as explained above.
$G_i$'s and $g_i$'s in the first term are related by $G_1=2g_1+(1+e+|I|)-1$ and $G_2=2g_2+(1+b_0-e+|J|)-1$.
Notice that $C_1 = \{c_m^{s+1},\cdots,c_m^{s+l}\} \sqcup \bigcup_{k=1}^{e}\cc_k \sqcup \bigcup_{k=1}^{|I|}\cc_{i_k} = \{c_m^{s+1},\cdots,c_m^{s+l}\} \sqcup \bigcup_{k=1}^{|I|}\cc_{i_k}$ and analogously for $C_2$.
Then
\begin{gather} \label{EQdelta1}
\delta^1 := \mathop{\delta}_{\substack{g_1>0\textrm{ or}\\ e+|I|>0\textrm{ or}\\ l\geq 2}} \cdot \mathop{\delta}_{\substack{g_2>0\textrm{ or}\\ b_0-e+|J|>0\textrm{ or}\\ |\cc_m|-l\geq 2}}
\end{gather}
is forced by the stability condition \eqref{EQStabilityQO}.
Similarly,
\begin{gather}\label{EQdelta2}
\delta^2 := \mathop{\delta}_{\substack{g_1>0\textrm{ or}\\ e+|I|>0}} \cdot \mathop{\delta}_{\substack{g_2>0\textrm{ or}\\ b_0-1-e+|J|>0}}.
\end{gather}

Now we evaluate the equation of Lemma \ref{LEMMAHardToUse} on $\overline{(b_k)}^g$.
There are three contributions: two coming from the first four resp. last two summands in \eqref{EQDualoFXi} and one coming from the right-hand side of \eqref{EQDualOfOO}.

%\begin{gather*}
	%\oEnd{A}(\rho) \ \alpha = \alpha \ \oP(\rho^{-1})^{\#} \quad\textrm{for any }\rho\in\Sigma_n \textrm{ and} \\
	%d \alpha = \alpha \partial_{\oP^{\#}} + \\
	%+ \oEnd{A}((\rho^{\kappa(a)\kappa(b)}\kappa|_{[n]})^{-1})(\overline{\xi}_{\kappa(a)\kappa(b)})_{\oEnd{A}}\alpha \oP(\kappa^{-1})^{\#} (\xi_{ab})^{\#}_{\oP} \ + \\
	%+ \frac{1}{2} \sum_{\substack{C_1\sqcup C_2=[n] \\ G_1+G_2=G}}  \oEnd{A}((\rho_1^{\kappa_1(a)}\kappa_1|_{C_1}\sqcup\rho_2^{\kappa_2(b)}\kappa_2|_{C_2})^{-1}) (\ooot{\kappa_1(a)}{\kappa_2(b)})_{\oEnd{A}} (\alpha\ot\alpha) (\oP(\kappa_1^{-1})^{\#}\ot\oP(\kappa_2^{-1})^{\#}) (\oooo{a}{b}{C_1\sqcup\{a\},G_1}{C_2\sqcup\{b\},G_2})^{\#}_{\oP},
%\end{gather*}

\textbf{First contribution.}
First four summands of \eqref{EQDualoFXi} contribute to
$$\oEnd{A}((\rho^{\kappa(a)\kappa(b)}\kappa|_{[n]})^{-1})(\overline{\xi}_{\kappa(a)\kappa(b)})_{\oEnd{A}}\alpha \oQO(\kappa^{-1})^{\#} (\xi_{ab})^{\#}_{\oQO} (\overline{(b_k)}^g)$$
of Lemma \ref{LEMMAHardToUse} by
\begin{gather*}
\sum_{i,j} \sum_{p=0}^{|\cc_i|-1} \sum_{q=0}^{|\cc_j|-1} \\
\mathclap{ \oEnd{A}(\rho^{\kappa(a)\kappa(b)}\kappa|_{[n]})^{-1}(\overline{\xi}_{\kappa(a)\kappa(b)})_{\oEnd{A}}\alpha \oQO(\kappa) \{\cycb{ac_i^{p+1}\cdots c_i^{p+|\cc_i|}bc_j^{q+1}\cdots c_j^{q+|\cc_j|}},\cc_1,\ldots\widehat{\cc_i}\ldots\widehat{\cc_j}\ldots,\cc_b\}^{g}. }
\end{gather*}
By abuse of notation, we have written all four terms as one.
Choose arbitrary $\kappa\in\Sigma_{n+2}$ so that $\oQO(\kappa) (\{\cycb{ac_i^{p+1}\cdots c_i^{p+|\cc_i|}bc_j^{q+1}\cdots c_j^{q+|\cc_j|}},\cc_1,\ldots\widehat{\cc_i}\ldots\widehat{\cc_j}\ldots,\cc_b\}^{g}) = \overline{(b'_k)}^g$ is the orbit representative in $\oQO([n+2],G-1)$.
Let $\psi:=\rho^{\kappa(a)\kappa(b)}\kappa|_{[n]}$.
Then
$$\sum_{i,j} \sum_{p=0}^{|\cc_i|-1} \sum_{q=0}^{|\cc_j|-1} \sum_{d=1}^{\dim A} f^{(b'_k),g} (\id^{\ot\cdots}\ot\underbrace{a_d}_{\mathclap{\substack{\kappa(a)\textrm{-th}\\ \textrm{from the left}}}}\ot\id^{\ot\cdots}\ot\underbrace{b_d}_{\mathclap{\kappa(b)\mathrm{-th}}}\ot\id^{\ot\cdots}) \circ \psi.$$
%To understand the meaning of $\psi$, notice that it is a block permutation with the following property:
%There are $b$ blocks (of increasing consecutive integers) corresponding to $\cc_1,\ldots,\cc_b$;
%$\psi$ moves the blocks $\cc_i$ and $\cc_j$ next to each other so that the merged block $\cc_i\cc_j$ is in position such that the blocks $\cc_i\cc_j,\cc_1,\ldots\widehat{\cc_i}\ldots\widehat{\cc_j}\ldots,\cc_b$ are again ordered increasingly according to their lengths;
%then $\psi$ cyclically permutes $\cc_i$ producing a block $\cc_i'$ starting with $\cc_i^{p+1}$, and similarly cyclically permutes $\cc_j$ producing a block $\cc_j'$ starting with $\cc_j^{q+1}$;
%and finally cyclically permutes the merged block $\cc_i'\cc_j'$.
%\todo[inline]{But $\kappa(a)$ and $\kappa(b)$ may differ for each choice of $\psi$, so one would have to really describe $\kappa$ directly, rather than $\psi$.}
%
Of course, the summands above \emph{directly} reflect the combinatorics of $\oQO$, although this is obscured by the complicated definition of $\kappa$ and $\psi$.
An example makes that clearer:

\begin{example}
Denote by
$$\perm{x_1 & x_2 & \ldots & x_n}{f(x_1) & f(x_2) & \ldots & f(x_n)}$$
a map $f:\{x_1,\ldots,x_n\}\to Y$ mapping each $x_i$ to $f(x_i)$.

Let $\overline{(b_k)}^g=\overline{(0,1,2,0,\ldots)}^g=\{\cyc{1},\cyc{23},\cyc{45}\}^g$ and let's investigate the term with $i=1, j=2$.
The relevant terms of $(\xi_{ab})^{\#}_{\oQO}(\{\cyc{1},\cyc{23},\cyc{45}\}^g)$ are
$$\{\cyc{45},\cyc{a1b23}\}^g \quad\textrm{and}\quad \{\cyc{45},\cyc{a1b32}\}^g.$$
Let's focus, for example, on the first term.
Choose $\kappa:=\perm{1&2&3&4&5&a&b}{4&6&7&1&2&3&5}$ (hence $\kappa^{-1}=\perm{1&2&3&4&5&6&7}{4&5&a&1&b&2&3}$).
Then $\oQO(\kappa)(\{\cyc{45},\cyc{a1b23}\}^g)=\{\cyc{12},\cyc{34567}\}^g=\overline{(0,0,1,0,0,1,0,\ldots)}^g$ and $\psi=\perm{1&2&3&4&5}{3&4&5&1&2}$ (hence $\psi^{-1}=\perm{1&2&3&4&5}{4&5&1&2&3}$).
Thus we get
$$\sum_d \pm f^{(0,0,1,0,0,1,0,\ldots),g}(x_4,x_5,a_d,x_1,b_d,x_2,x_3),$$
where $\pm$ are the Koszul signs of evaluating $f^{(0,0,1,0,0,1,0,\ldots),g}(\id\ot\id\ot a_d\ot\id\ot b_d\ot\id\ot\id)\circ\psi$ at $x_1\ot\cdots\ot x_5 \in A^{\ot 5}$.
Notice that you can directly read off the order of arguments from the values of $\kappa^{-1}$.
Different choices of $\kappa$ lead to orders: $(x_4,x_5,x_3,a_d,x_1,b_d,x_2)$, $(x_4,x_5,x_2,x_3,a_d,x_1,b_d)$, $(x_4,x_5,b_d,x_2,x_3,a_d,x_1)$ or $(x_4,x_5,x_1,b_d,x_2,x_3,a_d)$.
Of course, this choice is irrelevant due to the symmetry of $f^{(0,0,1,0,0,1,0,\ldots),g}$.
\end{example}
Now we rewrite the first four terms of \eqref{EQDualoFXi} more carefully:
\begin{gather*}
\sum_{\mathclap{\substack{i,j\\ i\neq j\\ |\cc_i|\neq 0\neq|\cc_j|}}} \sum_{p=0}^{|\cc_i|-1} \sum_{q=0}^{|\cc_j|-1} \sum_{d=1}^{\dim A}  f^{(b'_k),g} (\cdots\ot a_d\ot\cdots\ot b_d\ot\cdots) \comp \psi \ + \\
+ \mathop{\delta}_{b_0>0} \sum_{\substack{j\\ |\cc_j|\neq 0}} \sum_{q=0}^{|\cc_j|-1} \sum_{d=1}^{\dim A} f^{(b'_k),g} (\cdots\ot a_d\ot\cdots\ot b_d\ot\cdots) \comp \psi \ + \\
+ \mathop{\delta}_{b_0>0} \sum_{\substack{i\\ |\cc_i|\neq 0}} \sum_{p=0}^{|\cc_i|-1} \sum_{d=1}^{\dim A} f^{(b'_k),g} (\cdots\ot a_d\ot\cdots\ot b_d\ot\cdots) \comp \psi \ + \\
+ \mathop{\delta}_{\substack{b_0>1\textrm{ and}\\ (b>2\textrm{ or }g>0)}} \sum_{d=1}^{\dim A} f^{(b'_k),g} (\cdots\ot a_d\ot\cdots\ot b_d\ot\cdots) \comp \psi.
\end{gather*}
Here $b'_k$ in the first sum is $(b_0,b_1,\ldots,b_{|\cc_i|}-1,\ldots,b_{|\cc_j|}-1,\ldots,b_{|\cc_i|+|\cc_j|+2}+1,\ldots)$ if $|\cc_i|\neq|\cc_j|$, and $(b_0,b_1,\ldots,b_{|\cc_i|}-2,\ldots,b_{2|\cc_i|+2}+1,\ldots)$ if $|\cc_i|=|\cc_j|$.
In the second (resp. third) sum, $(b'_k)=(b_0-1,\ldots,b_{|\cc_i|}-1,\ldots,b_{|\cc_i|+2}+1,\ldots)$ (resp. $(b_0-1,\ldots,b_{|\cc_j|}-1,\ldots,b_{|\cc_j|+2}+1,\ldots)$).
In the fourth sum, $(b'_k)=(b_0-2,b_1,b_2+1,\ldots)$.
The positions of $a_d$ and $b_d$ and $\psi$ (depending on $i,j,p,q$) are described above and the choices made in determining them don't affect the expression $f^{(b'_k),g} (\cdots\ot a_d\ot\cdots\ot b_d\ot\cdots) \comp \psi$ because of the symmetries of $f^{(b'_k),g}$.

Now fix $i_0$ and $j_0$ and observe that the terms with $i=i_0<j=j_0$ and $i=j_0>j=i_0$ coincide.
Hence we get
\begin{gather}
2  \sum_{\mathclap{\substack{i,j\\ i<j\\ |\cc_i|\neq 0\neq|\cc_j|}}} \sum_{p=0}^{|\cc_i|-1} \sum_{q=0}^{|\cc_j|-1} \sum_{d=1}^{\dim A} f^{(b'_k),g} (\cdots\ot a_d\ot\cdots\ot b_d\ot\cdots) \comp \psi \ + \nonumber \\
+ \ 2 \mathop{\delta}_{b_0>0} \sum_{\substack{i\\ |\cc_i|\neq 0}} \sum_{p=0}^{|\cc_i|-1} \sum_{d=1}^{\dim A} f^{(b'_k),g} (\cdots\ot a_d\ot\cdots\ot b_d\ot\cdots) \comp \psi \ + \nonumber \\
+ \mathop{\delta}_{\substack{b_0>1\textrm{ and}\\ (b>2\textrm{ or }g>0)}} \sum_{d=1}^{\dim A} f^{(b'_k),g} (\cdots\ot a_d\ot\cdots\ot b_d\ot\cdots) \comp \psi. \label{EQFirstContr}
\end{gather}
Notice that the coefficient $2$ doesn't appear at the last summand.

\textbf{Second contribution.}
The last two terms of \eqref{EQDualoFXi} contribute to
$$\oEnd{A}((\rho^{\kappa(a)\kappa(b)}\kappa|_{[n]})^{-1})(\overline{\xi}_{\kappa(a)\kappa(b)})_{\oEnd{A}}\alpha \oQO(\kappa^{-1})^{\#} (\xi_{ab})^{\#}_{\oQO} (\overline{(b_k)}^g)$$
of Lemma \ref{LEMMAHardToUse} by
\begin{gather*}
\sum_{\substack{m\\ |\cc_m|\neq 0}} \sum_{s=0}^{|\cc_m|-1} \sum_{l=0}^{|\cc_m|} \sum_{d=1}^{\dim A} f^{(b'_k),g-1} (\cdots\ot a_d\ot\cdots\ot b_d\ot\cdots) \comp \psi \ + \\
+ \mathop{\delta}_{b_0>0} \sum_{d=1}^{\dim A} f^{(b'_k),g-1} (\cdots\ot a_d\ot\cdots\ot b_d\ot\cdots), %\label{EQNextEqDontKnowName}
\end{gather*}
where:
in the first summand, $a_d,b_d$ sit at positions $\kappa(a),\kappa(b)$, where $\kappa:[n]\sqcup\{a,b\}\to[n+2]$ is arbitrary such that $\oQO(\kappa)(\{\cycm{ac_m^{s+1}\cdots c_m^{s+l}},\cycm{bc_m^{s+l+1}\cdots c_m^{s+|\cc_m|}},\cc_1,\ldots\widehat{\cc_m}\ldots,\cc_b\}^{g-1}) = \overline{(b'_k)}^{g-1}$; and $\psi:=\rho^{\kappa(a)\kappa(b)}\kappa|_{[n]}$.
An example illustrates the meaning of $\kappa$ and $\psi$:

\begin{example}
Let $\overline{(b_k)}^g=\{\cyc{1234}\}^g$.
$(\xi_{ab})^{\#}_{\oQO}$ applied to this contains a summand
$$\{\cyc{a23},\cyc{b41}\}^{g-1}.$$
Its corresponding term is
$$\sum_d \pm f^{(0,0,0,2,0,\ldots),g-1}(a_d,x_2,x_3,b_d,x_4,x_1).$$
There are $|\Stab{\{\cyc{a23},\cyc{b41}\}^{g-1}}|=18$ other possible orders of the arguments, for example: $(x_3,a_d,x_2,x_4,x_1,b_d)$and $(b_d,x_4,x_1,a_d,x_2,x_3)$.
\end{example}

Now fix $s_0$ and $l_0$ and observe that the terms with $s=s_0,\ l=l_0$ and $s=s_0+l_0,\ l=|\cc_m|-l_0$ coincide.
Hence we pair the coincident terms,
thus obtaining
\begin{gather}
2 \sum_{\substack{m\\ |\cc_m|\neq 0}} \sum_{s=0}^{|\cc_m|-1} \sum_{l=|\cc_m|-s}^{|\cc_m|} \sum_{d=1}^{\dim A} f^{(b'_k),g-1} (\cdots\ot a_d\ot\cdots\ot b_d\ot\cdots) \comp \psi + \nonumber \\
+ \mathop{\delta}_{b_0>0} \sum_{d=1}^{\dim A} f^{(b'_k),g-1} (\cdots\ot a_d\ot\cdots\ot b_d\ot\cdots) \comp \psi. \label{EQSecondContr}
\end{gather}

\textbf{Third contribution.}
\eqref{EQDualOfOO} contributes to sum of the terms of the form
$$\oEnd{A}((\rho_1^{\kappa_1(a)}\kappa_1|_{C_1}\sqcup\rho_2^{\kappa_2(b)}\kappa_2|_{C_2})^{-1}) (\ooot{\kappa_1(a)}{\kappa_2(b)})_{\oEnd{A}} (\alpha\ot\alpha) (\oQO(\kappa_1^{-1})^{\#}\ot\oQO(\kappa_2^{-1})^{\#}) (\oooo{a}{b}{C_1\sqcup\{a\},G_1}{C_2\sqcup\{b\},G_2})^{\#}_{\oP}$$
of Lemma \ref{LEMMAHardToUse} by
\begin{gather}
\hspace{-.5em}\sum_{m=b_0+1}^b \hspace{-1.3em} \sum_{\substack{I,J\\ I\sqcup J=\\ [b]-[b_0]-\{m\}}} \hspace{-1.1em} \sum_{e=0}^{b_0} \!\! \sum_{\substack{g_1,g_2\\ g_1+g_2=g}} \hspace{-.8em} \sum_{s=0}^{|\cc_m|-1} \sum_{l=0}^{|\cc_m|} \sum_{d=1}^{\dim A} \!\! \delta^1 \!\cdot\! \left( f^{(b^1_k),g_1}(\cdots\ot a_d\ot\cdots)\!\cdot\! f^{(b^2_k),g_2}(\cdots\ot b_d\ot\cdots) \right) \!\comp\! \psi + \nonumber \\
%+ \!\mathop{\delta}_{b_0>1} \! 
+ \sum_{\substack{I,J\\ I\sqcup J=\\ [b]-[b_0]}} \sum_{e=0}^{b_0-1} \sum_{\substack{g_1,g_2\\ g_1+g_2=g}} \sum_{d=1}^{\dim A} \delta^2 \cdot \left( f^{(b^1_k),g_1}(\cdots\ot a_d\ot\cdots) \!\cdot\! f^{(b^2_k),g_2}(\cdots\ot b_d\ot\cdots) \right) \!\comp\! \psi, \label{EQThirdContr}
\end{gather}
%where $(b^1_k)$ is the b-sequence for cycles $\cyc{ac_m^{s+1}\cdots c_m^{s+l}},\cc_1,\ldots,\cc_e,\cc_{i_1},\ldots,\cc_{i_{|I|}}$ and 
where, in the first sum, $a_k$ sits at position $\kappa^1(a)$, where $\kappa^1:C_1\sqcup\{a\}\to[|C_1|+1]$ is such that $\oQO(\kappa^1)(\{\cyc{ac_m^{s+1}\cdots c_m^{s+l}},\cc_1,\ldots,\cc_e,\cc_{i_1},\ldots,\cc_{i_{|I|}}\}^g) = \overline{(b^1_k)}^{g_1}$;
similarly for the second factor;
and $\psi=\rho_1^{\kappa_1(a)}\kappa_1|_{C_1}\sqcup\rho_2^{\kappa_2(b)}\kappa_2|_{C_2}\in\Sigma_n$.
As usually, this permutation is best understood on an example:

\begin{example}
Let $\overline{(b_k)}^g=\{\cyc{},\cyc{12}\}^g$.
$(\ooo{a}{b})^{\#}_{\oQO}$ applied to this contains a summand
$$\{\cyc{a},\cyc{12}\}^{g_1} \ot \{\cyc{b}\}^{g_2},$$
for some $g_1+g_2=g$.
Its corresponding term is
$$\sum_d \pm f^{(0,1,1,0,\ldots),g_1}(a_d,x_1,x_2) \cdot f^{(0,1,0,\ldots),g_2}(b_d).$$
The only other possible order of the arguments of the left factor is $(a_d,x_2,x_1)$.
\end{example}

To summarize the application of Lemma \ref{LEMMAHardToUse} for $\oP=\oQO$, notice that its main equation takes the following form:
On the left-hand side, we have $d(f^{(b_k),g})(x_1,x_2,\ldots)$ for some fixed $q=\overline{(b_k)}^g$ and $x_1,\ldots,x_n\in A$.
To get the right-hand side, we collect all possible basis elements $y\in\oQO$ such that $\oxi{ab}(y)=q$ and all possible pairs $z_1\ot z_2$ of basis elements such that $\ooo{a}{b}(z_1\ot z_2)=q$.
To each such $y$, there is a term on the right-hand side of the form $\sum_d \pm f^{(b_k'),g'}\tilde{\kappa}(a_d,b_d,x_1,x_2,\ldots)$.
The calculations above clarify how to get $(b_k')$,$g'$ and $\tilde{\kappa}\in\Sigma_{n+2}$ from $y$ in a very easy way.
Similarly, each $z_1\ot z_2$ contributes the term $\sum_d \pm (f^{(b_k^1),g_1}\cdot f^{(b_k^2),g_2})\tilde{\kappa}(a_d,b_d,x_1,x_2,\ldots)$.

\begin{theorem}\label{THMQA_infty}
An algebra over $\feyn{\oQO}$ on a dg symplectic vector space $A$ is equivalently given by a collection of degree $0$ linear maps
$$f^{(b_k),g}:A^{\ot n}\to \fld$$
indexed by all eventually zero sequences $(b_k)_{k=0}^{\infty}$ of nonnegative integers satisfying the stability condition
$$4g+2b-4+n>0,$$
where $n:=\sum_{k=0}^\infty kb_k$ and $b:=\sum_{k=0}^\infty b_k$.
These maps are required to satisfy:
\begin{enumerate}
\item The $\Sigma_n$-stabilizer of $f^{(b_k),g}$ in $\mathop{Hom}(A^{\ot n},\fld)$ contains the $\Sigma_n$-stabilizer of $\overline{(b_k)}^g$ in $\oQO([n],2g+b-1)$, where $b=\sum_{k=0}^{\infty}b_k$,%$ is the number of nonzero entries of $(b_k)$,
\item The equation
\begin{gather} \label{EQMasterQuantumAoo}
d\left( f^{(b_k),g} \right) = \eqref{EQFirstContr} + \eqref{EQSecondContr} + \frac{1}{2}\eqref{EQThirdContr}
\end{gather}
holds.
\end{enumerate}
\end{theorem}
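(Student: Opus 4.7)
The plan is to follow the same template that the paper has now applied twice (to $\feyn{\oQC}$ in Theorem~\ref{THMLoop} and to $\cobar{\oAss}$ in Theorem~\ref{THMAoo}), but carry out the bookkeeping for the richer orbit structure of $\oQO$. Starting from Theorem~\ref{LEMMAAlgOverFeynTrans}, an algebra over $\feyn{\oQO}$ is a collection $\alpha(C,G)\colon\oQO(C,G)^{\#}\to\oEnd{A}(C,G)$ satisfying \eqref{EQFeynmanThirdTimeFirstOne} and \eqref{EQFeynmanThirdTime}. The first observation is that, by the equivariance axiom \eqref{EQFeynmanThirdTimeFirstOne}, the whole of $\alpha$ is recovered from its values at corollas of the form $([n],G)$; hence it suffices to describe $\alpha([n],G)$ for every $n,G$. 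Since $\oQO$ has a $\Sigma_n$-invariant set-basis, $\oQO([n],G)^{\#}$ is identified with $\oQO([n],G)$, and the basis splits into $\Sigma_n$-orbits indexed precisely by b-sequences $(b_k)$ subject to \eqref{EQOrbits} and $G=2g+b-1$. Choosing the orbit representative $\overline{(b_k)}^g$ as in \eqref{EQOrbitRepresentative} and defining $f^{(b_k),g}:=\alpha([n],G)(\overline{(b_k)}^g)\circ\iota_{\id_{[n]}}$, the datum of $\alpha$ becomes the datum of the collection $\{f^{(b_k),g}\}$.

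The first axiom of the theorem comes out by restating \eqref{EQFeynmanThirdTimeFirstOne} for the representative $\overline{(b_k)}^g$: any $\rho\in\Sigma_n$ that stabilises $\overline{(b_k)}^g$ in $\oQO([n],2g+b-1)$ must, by \eqref{EQFeynmanThirdTimeFirstOne}, also satisfy $\oEnd{A}(\rho)\alpha([n],G)(\overline{(b_k)}^g)=\alpha([n],G)(\overline{(b_k)}^g)$, and after composing with $\iota_{\id_{[n]}}$ and using \eqref{EQTinyUseful} this translates precisely into the stabiliser inclusion in item (1). Conversely, once this symmetry holds, setting $\alpha([n],G)(\oQO(\rho)\overline{(b_k)}^g):=\oEnd{A}(\rho)\alpha([n],G)(\overline{(b_k)}^g)$ is well-defined and extends $\Sigma_n$-equivariantly to all of $\oQO([n],G)$, and then by Lemma~\ref{LEMMAPedantic} to all corollas $(C,G)$, giving back the $\Sigma$-module morphism component of $\alpha$.

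For axiom (2), I would evaluate both sides of \eqref{EQFeynmanThirdTime} on the representative $\overline{(b_k)}^g$ and postcompose with $\iota_{\id_{[n]}}$. The left-hand side simply becomes $d(f^{(b_k),g})$ by definition of the differential on $\oEnd{A}$. The right-hand side splits into three summands — the $\xi_{ab}$-term and the $\ooo{a}{b}$-term — whose duals applied to $\overline{(b_k)}^g$ have already been written out in \eqref{EQDualoFXi} and \eqref{EQDualOfOO}; one simply has to push each resulting elementary tensor through $\alpha$, use Lemma~\ref{LEMMAPedantic} to rewrite it in the canonical labelling $[n+2]$ or $[|C_i|+1]$, and then use \eqref{EQTinyUseful} to convert $\iota_{\id}$ into $\iota_{\psi}\circ\psi$ so that the definitions \eqref{EQoo} and \eqref{EQxi} of $(\ooo{a}{b})_{\oEnd{A}}$ and $(\xi_{ab})_{\oEnd{A}}$ become directly applicable. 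The result of this mechanical unpacking is precisely the sum $\eqref{EQFirstContr}+\eqref{EQSecondContr}+\tfrac12\eqref{EQThirdContr}$.

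The main obstacle is purely combinatorial: getting the coefficients and summation ranges correct. Three subtleties have to be handled carefully. First, empty cycles are indistinguishable from one another, so one must introduce the $\mathop{\delta}_{b_0>\cdots}$ factors and separate off the empty-cycle cases in \eqref{EQDualoFXi}; essentially the same care gives the two pieces of \eqref{EQDualOfOO}. Second, the stability condition \eqref{EQStabilityQO} cuts out certain factors in \eqref{EQDualOfOO}, which is recorded by the indicator functions $\delta^1$ and $\delta^2$ of \eqref{EQdelta1}--\eqref{EQdelta2}. Third, each pairing of two distinguished marked points $a,b$ is counted twice by the naive dualisation — once as $(a,b)$ and once as $(b,a)$ — so after fixing a representative permutation $\rho'$ or $\rho''$ (which is not unique but whose remaining ambiguity is absorbed by the stabiliser condition already imposed on $f^{(b_k),g}$) one must pair these coincident summands, producing the overall factor $2$ in \eqref{EQFirstContr} and \eqref{EQSecondContr}, exactly compensated by the $\tfrac12$ in front of \eqref{EQThirdContr} inherited from \eqref{EQFeynmanThirdTime}. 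Once these three bookkeeping issues are under control, the argument is just a repetition — with more indices — of the derivation already carried out for Theorems~\ref{THMLoop} and \ref{THMAoo}.
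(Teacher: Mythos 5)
Your proposal follows essentially the same route as the paper: invoke Theorem~\ref{LEMMAAlgOverFeynTrans}, reduce to corollas $([n],G)$ via equivariance, parametrize $\Sigma_n$-orbits of the set-basis of $\oQO([n],G)$ by b-sequences, choose the canonical representative $\overline{(b_k)}^g$ and set $f^{(b_k),g}=\alpha([n],G)(\overline{(b_k)}^g)\circ\iota_{\id_{[n]}}$, dualize $(\xi_{ab})_{\oQO}$ and $(\ooo{a}{b})_{\oQO}$ as in \eqref{EQDualoFXi}--\eqref{EQDualOfOO}, and push everything through Lemma~\ref{LEMMAPedantic} and \eqref{EQTinyUseful}. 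That is exactly how Sections~4.3.1--4.3.3 arrive at ``Combining the above calculations yields:'' before the statement.

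One small imprecision: the factor $2$ appearing in \eqref{EQFirstContr} and \eqref{EQSecondContr} is not ``compensated by'' the $\tfrac12$ in front of \eqref{EQThirdContr} --- these are independent phenomena on separate terms. The $\xi_{ab}$-summand in \eqref{EQFeynmanThirdTime} carries no $\tfrac12$, and the $2$'s in \eqref{EQFirstContr}, \eqref{EQSecondContr} arise internally, from pairing the $(i,j)\leftrightarrow(j,i)$ (resp.\ $(s,l)\leftrightarrow(s+l,|\cc_m|-l)$) summands of the dualized contraction; whereas the $\tfrac12$ in front of \eqref{EQThirdContr} is simply inherited unchanged from the $\ooo{a}{b}$-summand of Theorem~\ref{LEMMAAlgOverFeynTrans}. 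This does not affect the validity of your argument, only the explanation of where the coefficients come from.
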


\begin{example}
$F:=f^{(0,0,0,2,0,\ldots),g}:A^{\ot 6}\to\fld$ has symmetries generated by the following three permutations:
\begin{gather*}
F(x_1,x_2,x_3,x_4,x_5,x_6) = (-1)^{\dg{x_3}(\dg{x_1}+\dg{x_2})} F(x_3,x_1,x_2,x_4,x_5,x_6), \\
F(x_1,x_2,x_3,x_4,x_5,x_6) = (-1)^{\dg{x_6}(\dg{x_4}+\dg{x_5})} F(x_1,x_2,x_3,x_6,x_4,x_5), \\
F(x_1,x_2,x_3,x_4,x_5,x_6) = (-1)^{(\dg{x_1}+\dg{x_2}+\dg{x_3})(\dg{x_4}+\dg{x_5}+\dg{x_6})} F(x_4,x_5,x_6,x_1,x_2,x_3).
\end{gather*}
\end{example}

\begin{remark}
If $b=0$ is also included in the definition of $\oQO$, the only new thing we get is $\{\}^g \in \oQO(\emptyset,2g-1)$ for $g\geq 2$.
These elements can't be composed using $\ooo{a}{b}$ nor $\xi_{ab}$ and even don't affect the dual of the structure maps.
Call $\widetilde{\oQO}$ this extension of $\oQO$.
Let $\alpha:\feyn{\widetilde{\oQO}}\to\oEnd{A}$ be an operad morphism.
Then $\alpha(\{\}^g):\fld\to\fld$ and so $d_{\oEnd{A}}\alpha(\{\}^g)=0$.
We easily see $\partial_{\feyn{\oQO}}(\{\}^g)=0$, hence $d_{\oEnd{A}}\alpha(\{\}^g)=\alpha\partial_{\feyn{\oQO}}(\{\}^g)$ is tautological.
Hence, for algebras over $\feyn{\oQO}$ extended in this way, we are only getting a collection of scalars which don't interact in any way with the rest of the operations $f^{(b_k),g}$.
\end{remark}

%%%%%%%%%%%%%%

\subsubsection{\texorpdfstring{Relation to Herbst's quantum $A_\infty$ algebras}{Relation to Herbst's quantum A\_oo algebras}} \label{SECHerbstOne}

In this Section, we recover the results of Herbst in \cite{Herbst} concerning the quantum $A_\infty$ algebras satisfying $f^{(b_k),g}=0 \textrm{ whenever }b_0>0$.

In physics, so called string vertices are used rather than the collection $\{f^{(b_k),g}\}$.
Roughly speaking, the string vertex is an $f^{(b_k),g}$ evaluated at some fixed vectors of $A$.

Define the reduced string vertex
\begin{gather}
\mathcal{F}^{g,b}_{v_1,\ldots,v_{l_1}|v_{l_1+1},\ldots,v_{l_1+l_2}|\cdots|v_{l_1+\cdots+l_{\ob-1}+1},\ldots,v_{l_{l_1+\cdots+l_{\ob-1}+l_{\ob}}}} :=  \label{EQBigF} \\
= -\frac{1}{2} (f^{(b_k),g}\comp\beta_{(l_1,\ldots,l_\ob)})(v_1\ot\cdots\ot v_n) = \nonumber \\
= \pm -\frac{1}{2} f^{(b_k),g} ( v_{l_{1}+\cdots+l_{\beta^{-1}(1)-1}+1}\ot\ldots\ot v_{l_1+\cdots+l_{\beta^{-1}(1)-1}+l_{\beta^{-1}(1)}}\ot\cdots \nonumber \\
\phantom{= \pm -10 f^{(b_k),g} (} \cdots \ot v_{l_{1}+\cdots+l_{\beta^{-1}(\ob)-1}+1}\ot\ldots\ot v_{l_1+\cdots+l_{\beta^{-1}(\ob)-1}+l_{\beta^{-1}(\ob)}} ), \nonumber
\end{gather}
where the b-sequence $(b_k)$ and $\beta\in\Sigma_\ob$ on the RHS are determined as follows:
for $k\geq 1$, $b_k$ is the number of $l_i$'s equal to $k$; then $b_0:=b-\ob$.
The permutation $\beta\in\Sigma_\ob$ is arbitrary such that $\beta(l_1,\ldots,l_\ob)$ is nondecreasing, i.e.
$$l_{\beta^{-1}(1)} \leq \cdots \leq l_{\beta^{-1}(\ob)}$$
and $\beta_{(l_1,\ldots,l_b)}\in\Sigma_n$ is the block permutation permuting blocks $1,\ldots,l_1$ and $l_1+1,\ldots,l_1+l_2$ and so on, according to $\beta$.
The sign $\pm$ is the Koszul sign of the action of $\beta_{(l_1,\ldots,l_b)}$ on $v$'s.
The coefficient $-1/2$ is purely conventional.

The intuition behind the formula \eqref{EQBigF} is roughly this: the blocks in the subscript of $\mathcal{F}$, separated by ``$|$'', correspond to nonempty cycles of $\{\cc_1,\ldots,\cc_b\}^g$.
The subscript also determines an order of the cycles.
Before evaluating, we reorder the cycles using $\beta$ so that the lengths of the cycles form a nondecreasing sequence.

Notice that the above requirement doesn't determine $\beta$ uniquely: if $\beta'\in\Sigma_\ob$ is another permutation such that $l_{\beta'^{-1}(1)} \leq \cdots \leq l_{\beta'^{-1}(\ob)}$, then it is easy to verify that $\beta'_{(l_1,\ldots,l_\ob)}\beta^{-1}_{(l_1,\ldots,l_\ob)} \in \mathrm{Stab}(\overline{(b_k)}^g)$, hence $f^{(b_k),g}\comp\beta_{(l_1,\ldots,l_\ob)} = f^{(b_k),g}\comp\beta'_{(l_1,\ldots,l_\ob)}$ and the reduced string vertex is independent of this choice.

\begin{example}
%Denote b-sequences $(b_k)=(b_0,b_1,\ldots)$.
\begin{gather*}
\mathcal{F}^{g,4}_{v_{1}|v_{2}v_{3}} = -\frac{1}{2}f^{(2,1,1,0,\ldots),g}(v_1\ot v_2\ot v_3) \\
\mathcal{F}^{g,2}_{v_{1}v_{2}|v_{3}} = -(-1)^{\dg{v_3}(\dg{v_1}+\dg{v_2})}\frac{1}{2}f^{(0,1,1,0,\ldots),g}(v_3\ot v_1\ot v_2)
\end{gather*}
\end{example}

It is now easy to express the equation \eqref{EQMasterQuantumAoo} in terms of the reduced string vertices.
With the forthcoming Theorem \ref{THMHerbstMain} in mind, we assume $b_0=0$ and thus we will be interested only in the first terms of the three contributions.

\begin{example} \label{EXTheOnlYOneQuestMark}
We express the first term in \eqref{EQFirstContr} evaluated on $v_1\ot\cdots\ot v_n \in A^{\ot n}$.
To lighten the notation, we will write $k$ instead of $v_k$ in the subscript of $\mathcal{F}$.
Similarly, we write $a$ instead of $a_d$ and $b$ instead of $b_d$.
Further, given $\overline{(b_k)}^g = \{\cc_1,\ldots,\cc_b\}^g$ as in \eqref{EQOrbitRepresentative}, we write $\cc_k$ instead of $(c_k^{1},\ldots,c_k^{|\cc_k|})$ in the subscript of $\mathcal{F}$.
Finally, if an empty cycle appears in the subscript of $\mathcal{F}$, then we omit it.

We easily see that the first term of \eqref{EQFirstContr} yields (omitting the summations)
\begin{gather*}
2f^{(b'_k),g}(\cdots\ot a_d\ot\cdots\ot b_d\ot\cdots)\psi(v_1\ot\cdots\ot v_n) = \\
= -\pm 4 \mathcal{F}^{g,b-1}_{ac_i^{p+1}\cdots c_i^{p+|\cc_i|}bc_j^{q+1}\cdots c_j^{q+|\cc_j|}|\cc_1|\cdots\widehat{\cc_i}\cdots\widehat{\cc_j}\cdots|\cc_b},
\end{gather*}
where $\pm$ is the Koszul sign of permuting $v_{ab1\cdots n}$ to $v_{ac_i^{p+1}\cdots c_i^{p+|\cc_i|}bc_j^{q+1}\cdots c_j^{q+|\cc_j|},\cc_1,\cdots\widehat{\cc_i}\cdots\widehat{\cc_j}\cdots,\cc_b}$.

The first term in \eqref{EQSecondContr} is handled analogously.
\end{example}

\begin{example} \label{EXTheOnlYOneQuestMarkII}
Using the abbreviations as in the previous example, the first term in \eqref{EQThirdContr} can be expressed, upon evaluation on $v_1\ot\cdots\ot v_n$, by
\begin{gather*}
\left( f^{(b^1_k),g_1}(\cdots\ot a_d\ot\cdots) \cdot f^{(b^2_k),g_2}(\cdots\ot b_d\ot\cdots) \right) \psi (v_1\ot\cdots\ot v_n) = \\
= \pm 4 \mathcal{F}^{g_1,|I|+1}_{ac_m^{s+1}\cdots c_m^{s+l}|\cc_{i_1}|\cdots|\cc_{i_{|I|}}} \cdot \mathcal{F}^{g_2,|J|+1}_{bc_m^{s+l+1}\cdots c_m^{s+|\cc_m|}|\cc_{j_1}|\ldots|\cc_{j_{|J|}}},
\end{gather*}
where $\pm$ is the Koszul sign of permuting $v_{ab1\cdots n}$ to $v_{ac_m^{s+1}\cdots c_m^{s+l},\cc_{i_1},\cdots,\cc_{i_{|I|}},bc_m^{s+l+1}\cdots c_m^{s+|\cc_m|},\cc_{j_1},\cdots,\cc_{j_{|J|}}}$.
\end{example}

Finally, let's observe that the reduced string vertices have the expected symmetries:
\begin{gather}
\mathcal{F}^{g,b}_{\cdots|v_1\cdots v_i|w_1\cdots w_j|\cdots} = (-1)^{(\sum_{k=1}^i\dg{v_k})(\sum_{k=1}^j\dg{w_k})} \mathcal{F}^{g,b}_{\cdots|w_1\cdots w_j|v_1\cdots v_i|\cdots} \nonumber \\
\mathcal{F}^{g,b}_{\cdots|v_1v_2\cdots v_{i-1}v_i|\cdots} = (-1)^{\dg{v_i}(\sum_{k=1}^{i-1}\dg{v_k})} \mathcal{F}^{g,b}_{\cdots|v_iv_1v_2\cdots v_{i-1}|\cdots} \label{EQSignsInHerbst}
\end{gather}

Now we can state the precise comparison to Herbst's work:

\begin{theorem} \label{THMHerbstMain}
Let $(A,d=0,\omega)$ be a symplectic dg vector space with zero differential.
If $A$ carries a structure of algebra over $\feyn{\oQO}$ satisfying
$$f^{(b_k),g}=0 \textrm{ whenever }b_0>0,$$
then for any $\overline{(b_k)}^g=\{\cc_1,\ldots,\cc_b\}^g$ with $b_0=0$ and any $v_1,\ldots,v_n\in A$ we have
\begin{gather}
\sum_{\substack{i,j\\ i<j}} \sum_{p=0}^{|\cc_i|-1} \sum_{q=0}^{|\cc_j|-1} \sum_{d=1}^{\dim A} 
\pm \mathcal{F}^{g,b-1}_{ac_i^{p+1}\cdots c_i^{p+|\cc_i|}bc_j^{q+1}\cdots c_j^{q+|\cc_j|}|\cc_1|\cdots|\widehat{\cc_i}|\cdots|\widehat{\cc_j}|\cdots|\cc_b} + \nonumber \\
+ \sum_{m=1}^b \sum_{s=0}^{|\cc_m|-1} \sum_{l=|\cc_m|-s}^{|\cc_m|} \sum_{d=1}^{\dim A} 
\pm \mathcal{F}^{g-1,b+1}_{ac_m^{s+1}\cdots c_m^{s+l}|bc_m^{s+l+1}\cdots c_m^{s+|\cc_m|}|\cc_1|\cdots|\widehat{\cc_m}|\cdots|\cc_b} = \label{EQMEHErbst} \\
= \frac{1}{2} \sum_{m=1}^b \sum_{\substack{I,J\\ I\sqcup J=\\ [b]-\{m\}}} \sum_{\substack{g_1,g_2\\ g_1+g_2=g}} \sum_{s=0}^{|\cc_m|-1} \sum_{l=0}^{|\cc_m|} \sum_{d=1}^{\dim A} \pm \delta^1 \cdot \mathcal{F}^{g_1,|I|+1}_{ac_m^{s+1}\cdots c_m^{s+l}|\cc_{i_1}|\cdots|\cc_{i_{|I|}}} \cdot \mathcal{F}^{g_2,|J|+1}_{bc_m^{s+l+1}\cdots c_m^{s+|\cc_m|}|\cc_{j_1}|\cdots|\cc_{j_{|J|}}}, \nonumber
\end{gather}
where $\delta^1$ is as in \eqref{EQdelta1} with $e=b_0=0$ and the signs are Koszul signs produced by permuting $a_d\ot b_d\ot v_{1\cdots n}$ into the orders indicated by the subscripts of $\mathcal{F}$'s.
Notice that we are using the abbreviations introduced in Example \ref{EXTheOnlYOneQuestMark}.

The above equation is precisely the Herbst's \emph{minimal quantum $A_\infty$ relation} of Theorem $1$ of \cite{Herbst}.
\end{theorem}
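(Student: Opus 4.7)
The plan is to apply the master equation \eqref{EQMasterQuantumAoo} of Theorem \ref{THMQA_infty} directly to the orbit representative $\overline{(b_k)}^g$ with $b_0 = 0$. Since the differential on $A$ vanishes, the left-hand side $d(f^{(b_k),g})$ is zero, so the master equation collapses to
\[
\eqref{EQFirstContr} + \eqref{EQSecondContr} + \tfrac{1}{2}\eqref{EQThirdContr} = 0.
\]
It remains to rewrite this identity in the language of the reduced string vertices $\mathcal{F}$ and to identify which terms survive the vanishing hypothesis.

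First I use the hypothesis $f^{(b_k),g} = 0$ whenever $b_0 > 0$ to discard summands. In \eqref{EQFirstContr}, the second, third and fourth summands carry explicit factors $\mathop{\delta}_{b_0 > 0}$, $\mathop{\delta}_{b_0 > 0}$ and $\mathop{\delta}_{b_0 > 1}$, and vanish under $b_0 = 0$; only the first summand survives, and since it joins two nonempty cycles $\cc_i, \cc_j$, the resulting b-sequence $(b'_k)$ still has $b'_0 = 0$, so $f^{(b'_k),g}$ is not killed by the hypothesis. An analogous analysis on \eqref{EQSecondContr} kills its second summand and preserves its first, since splitting a nonempty cycle $\cc_m$ produces two cycles containing $a$ respectively $b$ and hence $b''_0 = 0$. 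In \eqref{EQThirdContr} the sum index $e$ is forced to $0$, its second summand dies by $\mathop{\delta}_{b_0 > 1}$, and the two output cycle structures have $b^1_0 = b^2_0 = 0$; the remaining stability factor $\delta^1$ specialised to $e = 0$ matches the one written in the theorem statement.

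Next I substitute the computations of Examples \ref{EXTheOnlYOneQuestMark} and \ref{EXTheOnlYOneQuestMarkII}: each summand of the first surviving contribution becomes $(-4)$ times a value of $\mathcal{F}^{g,b-1}$ equipped with its Koszul sign, each summand of the second contribution becomes $(-4)$ times a value of $\mathcal{F}^{g-1,b+1}$, and each summand of the third contribution becomes $(+4)$ times a product $\mathcal{F}^{g_1,|I|+1} \cdot \mathcal{F}^{g_2,|J|+1}$. Collecting terms and dividing the resulting identity by $-4$ yields precisely \eqref{EQMEHErbst}.

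The main obstacle is purely bookkeeping: one must verify that the Koszul signs generated by the permutations $\rho', \rho'', \psi, \rho_1, \rho_2$ introduced in the derivations of \eqref{EQFirstContr}--\eqref{EQThirdContr} coincide with the sign declared in the theorem (the one produced by permuting $a_d \tp b_d \tp v_{1 \cdots n}$ into the order dictated by the subscripts of the various $\mathcal{F}$'s). This amounts to unwinding the explicit definitions of those permutations and, where a cycle must be rotated or where two cycles of equal length must be exchanged inside a single block, invoking the symmetries recorded in Lemma \ref{LEMMASignsInHerbst}. No further ideas are required.
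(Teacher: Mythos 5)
Your derivation of \eqref{EQMEHErbst} itself is correct and follows the same route as the paper: set $d=0$ in \eqref{EQMasterQuantumAoo}, observe that because the input corolla has $b_0=0$ all the $\delta_{b_0>0}$ and $\delta_{b_0>1}$ terms in \eqref{EQFirstContr}--\eqref{EQThirdContr} drop out, substitute the $\mathcal{F}$-expressions from Examples \ref{EXTheOnlYOneQuestMark} and \ref{EXTheOnlYOneQuestMarkII}, and divide by $-4$. Two small imprecisions: you invoke the hypothesis $f^{(b_k),g}=0$ for $b_0>0$ as what kills those $\delta$-terms, but those factors vanish simply because the input $\overline{(b_k)}^g$ has $b_0=0$ (the hypothesis plays no role in the derivation of \eqref{EQMEHErbst}; it is there so that the $\mathcal{F}$'s exhaust the algebra's data, i.e.\ to make the comparison with Herbst meaningful); and the count of ``four summands'' matches \eqref{EQDualoFXi}/\eqref{EQNextEqDontKnowNamePrime}, not the already-reduced \eqref{EQFirstContr}, which has three.

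The genuine gap is the second assertion of the theorem: that \eqref{EQMEHErbst} \emph{is precisely Herbst's minimal quantum $A_\infty$ relation}. The paper's proof does not stop at \eqref{EQMEHErbst}; it then (i) pairs the index tuples $(I,J,g_1,g_2,s,l)\leftrightarrow(J,I,g_2,g_1,s+l,|\cc_m|-l)$ to absorb the $\tfrac12$ and restrict the $l$-range, obtaining \eqref{EQHerbstTwo}; (ii) replaces the sum over $m$ and unordered partitions $I\sqcup J=[b]-\{m\}$ by a sum over $|I|,|J|$ and $\sigma\in\Sigma_b$, verifying that the map $(|I|,|J|,\sigma)\mapsto(\sigma(1),\{\sigma(2),\ldots\},\{\sigma(|I|+2),\ldots\})$ is surjective with fibers of cardinality $|I|!\,|J|!$; and (iii) uses Lemma \ref{LEMMASignsInHerbst} to justify this re-indexing at the level of signed $\mathcal{F}$'s, and identifies the stability factor $\delta^1$ with Herbst's minimality condition. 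None of this appears in your proposal. In fact Lemma \ref{LEMMASignsInHerbst} is needed precisely here, not (as you suggest) in matching the Koszul signs in \eqref{EQMEHErbst}, which is already fully handled by the explicit permutations $\rho',\rho'',\psi,\rho_1,\rho_2$ and by the independence-of-choice remarks built into the derivations of \eqref{EQFirstContr}--\eqref{EQThirdContr}. Without step (i)--(iii), the claim that the identity coincides with Herbst's Theorem~1 is unproved.
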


\begin{proof}
Assume the algebra over $\feyn{\oQO}$ is given and let's rewrite Equation \eqref{EQMasterQuantumAoo} in terms of the reduced string vertices.
The first sum was worked out in detail including the sign in Example \ref{EXTheOnlYOneQuestMark}.
The second sum is completely analogous.
The third sum was worked out in Example \ref{EXTheOnlYOneQuestMarkII}.
This yields \eqref{EQMEHErbst}.

At this point, our notation is almost the same as Herbst's in Equation $(24)$ of \cite{Herbst}.
We just need to adjust the summation in the last term:

First, we consider a pairing similar to that used to get \eqref{EQSecondContr}:
Let the numbers $m$ and those of the tuple $(I,J,g_1,g_2,s,l)$ have the meaning as in the summations \eqref{EQThirdContr} or the RHS of \eqref{EQMEHErbst}.
Given $m$, pair $$(I^0,J^0,g_1^0,g_2^0,s^0,l^0) \leftrightarrow (J^0,I^0,g_2^0,g_1^0,s^0+l^0,|\cc_m|-l^0).$$
The corresponding $\pm\mathcal{F}'s$ coincide.
Thus the RHS of \eqref{EQMEHErbst} becomes
\begin{gather} \label{EQHerbstTwo}
\sum_{m=1}^b \sum_{\substack{I,J\\ I\sqcup J=\\ [b]-\{m\}}} \sum_{\substack{g_1,g_2\\ g_1+g_2=g}} \sum_{s=0}^{|\cc_m|-1} \sum_{l=|\cc_m|-s}^{|\cc_m|} \sum_{d=1}^{\dim A} \pm \delta^1 \cdot \mathcal{F}^{g_1,|I|+1}_{ac_m^{s+1}\cdots c_m^{s+l}|\cc_{i_1}|\cdots|\cc_{i_{|I|}}} \cdot \mathcal{F}^{g_2,|J|+1}_{bc_m^{s+l+1}\cdots c_m^{s+|\cc_m|}|\cc_{j_1}|\cdots|\cc_{j_{|J|}}}.
\end{gather}

Next step is to replace the summation 
$$\sum_{m=1}^{b} \sum_{\substack{I,J\\ I\sqcup J=\\ [b]-\{m\}}} \quad\textrm{by}\quad \sum_{\substack{|I|,|J|\geq 0\\ |I|+|J|=b-1}} \sum_{\sigma\in\Sigma_{b}}.$$
The correspondence is
$$(|I|,|J|,\sigma) \mapsto (\sigma(1), \{\sigma(2),\ldots,\sigma(|I|+1)\}, \{\sigma(|I|+2),\ldots,\sigma(\underbrace{|I|+|J|+1}_{b})\}).$$
This is surjective but not injective.
Fortunately, it is easy to see that each preimage has $|I|!|J|!$ elements.
Using \eqref{EQSignsInHerbst}, we see that \eqref{EQHerbstTwo} equals
\begin{gather*}
\sum_{\substack{|I|,|J|\geq 0\\ |I|+|J|=b-1}} \sum_{\sigma\in\Sigma_{b}} \sum_{\substack{g_1,g_2\\ g_1+g_2=g}} \sum_{s=0}^{|\cc_m|-1} \sum_{l=|\cc_m|-s}^{|\cc_m|} \sum_{d=1}^{\dim A} \\
\pm \delta^1 \ \frac{1}{|I|!|J|!} \mathcal{F}^{g_1,|I|+1}_{ac_{\sigma(1)}^{s+1}\cdots c_{\sigma(1)}^{s+l}|\cc_{\sigma(2)}|\cdots|\cc_{\sigma(|I|+1)}} \cdot \mathcal{F}^{g_2,|J|+1}_{bc_{\sigma(1)}^{s+l+1}\cdots c_{\sigma(1)}^{s+|\cc_{\sigma(1)}|}|\cc_{{\sigma(|I|+2)}}|\cdots|\cc_{\sigma(b)}}.
\end{gather*}
Finally, the stability condition, represented by the $\delta^1$ above, corresponds to the Herbst's notion of minimality.
\end{proof}

\begin{remark}
A priori, the obvious converse of Theorem \ref{THMHerbstMain} doesn't hold.
In fact, algebras over $\feyn{\oQO}$ with $f^{(b_k),g}=0$ for $b_0>0$ satisfy not only equations \eqref{EQMEHErbst}, but also equations \eqref{EQMasterQuantumAoo} with $b_0=0$, whose LHS vanishes but there can be nonzero terms on the RHS.
\end{remark}

%%%%%%%%%%%%%%%%%

\subsubsection{\texorpdfstring{Master equation}{Master equation}} \label{SECBarQO}

In this section, we apply the Barannikov's theory of Section \ref{SSECBarannikov} to get a master equation describing quantum $A_\infty$-algebras.
We obtain results dual to those of the preceding Section \ref{SECHerbstOne}, except that we allow for empty boundaries.

The quantum $A_\infty$-algebras are degree $0$ solutions $S$ of the master equation 
\begin{gather} \label{EQMEQAoo}
dS+\Delta S+\frac{1}{2}\{S,S\} = 0
\end{gather}
solved in the space 
$$\tilde{P} = \prod_{n,G} \oQO(n,G) \ot_{\Sigma_n} {A^\#}^{\ot n}.$$
We introduce a notation similar to that for the reduced string vertices \eqref{EQBigF}:
For each $1\leq k\leq\overline{b}$, let $I^k=(I^k_1\cdots I^k_{l_k})$ be an $l_k$-tuple in $[\dim A]$.
Denote
\begin{gather} \label{EQHerbstPhi}
\phi^{I^1|\cdots|I^{\overline{b}};g,b} := \overline{(b_k)}^g \ot_{\Sigma_n} \beta_{(l_1,\ldots,l_{\overline{b}})}\phi^{I^1\cdots I^{\overline{b}}} \in \oQO([n],G) \ot_{\Sigma_n} {A^\#}^{\ot n},
\end{gather}
where $(b_k)$ is the $b$-sequence corresponding to $\overline{b}$ cycles of lengths $l_1,\ldots,l_{\overline{b}}$ and $b-\overline{b}$ empty cycles, $n=l_1+\cdots l_{\overline{b}}$ and $\beta_{(l_1,\ldots,l_{\overline{b}})}\in\Sigma_n$ is the block permutation described below \eqref{EQBigF} and $I^k$ is an $l_k$-tuple $(I^k_1\cdots I^k_{l_k})$ for each $k$; and $G=2g+b-1$.

Notice that every $\phi^{I^1|\cdots|I^{\overline{b}};g,b} \in \tilde{P}$ is restricted by the stability condition $2(2g+b-2)+\sum_{k=1}^{\overline{b}}l_k > 0$.
Obviously, every element of $\tilde{P}$ can be written in the form
$$S = \sum_{\substack{g,\overline{b}\\ I^1,\ldots,I^{\overline{b}}}} C_{I^1|\cdots|I^{\overline{b}}}^{g,b}\phi^{I^1|\cdots|I^{\overline{b}};g,b}$$
for some coefficients $C_{I^1|\cdots|I^{\overline{b}}}^{g,b}\in\fld$.
Notice that this expression is not unique (see also Section \ref{SECHerbstII} below).

%let $(b_k)$ be a b-sequence.
%Let $I^1,\ldots,I^b$ be multiindices such that for all $k$ precisely $b_k$ of them have length $k$.
%Consider a composite multiindex $I=(I^1\cdots I^{b_1})$.
%Given $k$ and $i$, let ${I^k_i}'=(I^k_1,\ldots,I^k_{i-1})$ and ${I^k_i}''=(I^k_{i+1},\ldots,I^k_{|I^k|})$.
%Similarly, given $l$ and $j$, we construct ${I^l_j}'$ and ${I^l_j}''$ from $I^l$.
%Instead of $\overline{(b_k)}^g$ we will write $\{\cyc{I^1}\cdots\cyc{I^b}\}^g$.
%Denote

Using Lemma \ref{LEMBarannikovsIsoFormulas}, we easily obtain the following formula for the BV operator $\Delta$:
\begin{gather}
\Delta \phi^{I^1|\cdots|I^{\overline{b}};g,b} = \nonumber \\
= 2\sum_{\substack{p<q\\ i,j}} \pm \omega^{I^p_iI^q_j} \phi^{ I^p_{i+1}\cdots I^p_{l_p} I^p_{1}\cdots I^p_{i-1} I^q_{j+1}\cdots I^q_{l_q} I^q_1\cdots I^q_{j-1} | I^1|\cdots|\widehat{I^p}|\cdots|\widehat{I^q}|\cdots|I^{\overline{b}} ;g+1,b-1 } + \label{EQQOCDeltaExplicit} \\
+ 2\sum_{\substack{p\\ i<j}} \pm \omega^{I^p_iI^q_j} \phi^{I^p_{i+1}\cdots I^p_{j-1} | I^p_{j+1}\cdots I^p_{l_p}I^p_1\cdots I^p_{i-1} | I^1|\cdots|\widehat{I^p}|\cdots|I^{\overline{b}};g,b+1}. \nonumber
%\Delta \phi^{I^1|\cdots|I^{\overline{b}};g,b} = \\
%= 2\sum_{\substack{p<q\\ i,j}} \pm \omega^{I^p_iI^q_j} \phi^{ {I^p_i}''{I^q_j}'{I^q_j}''{I^p_i}'|I^1|\cdots|\widehat{I^p}|\cdots|\widehat{I^q}|\cdots|I^{\overline{b}} ;g+1,b-1 } + \\
%+ 2\sum_{\substack{p\\ i<j}} \pm \omega^{I^p_iI^p_j} \phi^{ {I^p_{ij}}' | {I^p_{ij}}'' | I^1|\cdots|\widehat{I^p}|\cdots|I^{\overline{b}};g,b+1},
\end{gather}
%where, as in \eqref{???}, ${I^p_i}':=(I^p_1\cdots I^p_{i-1}) , {I^p_i}'':=(I^p_{i+1}\cdots I^p_{l_p})$ and similarly for ${I^q_j}' , {I^q_j}''$; and ${I^p_{ij}}':=(I^p_{i+1}\cdots I^p_{j-1}) , {I^p_{ij}}'':=(I^p_{j+1}\cdots I^p_{l_p}I^p_{1}\cdots I^p_{i-1})$.
In the first term, the sign consists of $(-1)^{\dg{\phi^{I^1\cdots I^{\overline{b}}}}+\dg{\phi^{I^q_j}}}$ and of the Koszul sign of permuting $\phi^{I^1\cdots I^{\overline{b}}}$ to $\phi^{I^p_iI^q_jI^p_{i+1}\cdots I^p_{l_p}I^q_1\cdots I^q_{j-1}I^q_{j+1}\cdots I^q_{l_q}I^p_1\cdots I^p_{i-1}I^1\cdots\widehat{I^p}\cdots\widehat{I^q}\cdots I^{\overline{b}}}$ (this Koszul sign already includes the signs coming from the positional derivatives of Lemma \ref{LEMBarannikovsIsoFormulas}).
Notice that this permutation brings the order of the superscript indices $I^1\cdots I^{\overline{b}}$ appearing in the argument of $\Delta$ on the first line to their order in the second line.
The sign in the second term is similar.

For the BV bracket, we obtain
\begin{gather}
\left\{ \phi^{I^1|\cdots|I^{\overline{b}_1};b_1,g_1} , \phi^{J^1|\cdots|J^{\overline{b}_2};b_2,g_2} \right\} = \nonumber \\
%= \sum_{\substack{1\leq p\leq\overline{b}_1,\\ 1\leq q\leq\overline{b}_2,\\ 1\leq i\leq l_p,\\ 1\leq j\leq m_q}}
= \sum_{p,q,i,j} \pm \omega^{I^p_iJ^q_j} \phi^{ I^p_{i+1}\cdots I^p_{l_p}I^p_1\cdots I^p_{i-1}J^q_{j+1}\cdots J^q_{m_q}J^q_1\cdots J^q_{j-1}|I^1|\cdots|\widehat{I^p}|\cdots|I^{\overline{b}_1}|J^1|\cdots|\widehat{J^q}|\cdots|J^{\overline{b}_2} }, \label{EQQOCBrackerExplicit}
\end{gather}
where, for each $k$, $l_k$ is the length of the tuple $I^k$; and similarly $m_k$'s are lengths of $J^k$'s.
The sign consists of the factor $(-1)^{ \dg{\phi^{I^p_i}}(\dg{\phi^{I^1\cdots I^{\overline{b}_1}}}+1) + \dg{\phi^{J^1\cdots J^{\overline{b}_2}}} + \dg{\phi^{I^1\cdots I^{\overline{b}_1}}}\dg{\phi^{J^1\cdots J^{\overline{b}_2}}} + 1 }$ of Lemma \ref{LEMBarannikovsIsoFormulas}) and of the Koszul sign of permutation taking $\phi^{I^1\cdots I^{\overline{b}_1}J^1\cdots J^{\overline{b}_2}}$ to $\phi^{ J^q_i I^p_i I^p_{i+1}\cdots I^p_{l_p}I^p_1\cdots I^p_{i-1}J^q_{j+1}\cdots J^q_{m_q}J^q_1\cdots J^q_{j-1} I^1 \cdots \widehat{I^p} \cdots I^{\overline{b}_1} J^1 \cdots \widehat{J^q} \cdots J^{\overline{b}_2} }$.
Again, notice that this permutation corresponds to the change of order of indices from the first to the second line except for the switch of $I^p_i$ and $J^q_i$.

Notice that the restrictions due to stability are included already in the allowed ``monomials'' $\phi^{I^1|\cdots|I^{\overline{b}};g,b}$ and, contrary to Sections \ref{SECAxiomsForQuantumAoo} and \ref{SECHerbstOne}, don't complicate the master equation.

%%%%%%%%%%%%%%%%%%%%%%%%%%%%%%%%%

\subsubsection{\texorpdfstring{Comparison to Herbst's generating function}{Comparison to Herbst's generating function}} \label{SECHerbstII}

\begin{lemma} \label{LEMAnotherFormOfS}
Every element in $\tilde{P}(n,G)$ can be uniquely expressed as
\begin{gather}
S_{n,G}=\sum_{\substack{g,b\\ I^1,\ldots,I^{\overline{b}}}} \frac{1}{\overline{b}!l_1\cdots l_{\overline{b}}} C^{g,b}_{I^{1}|\cdots|I^{\overline{b}}} \ \phi^{I^{1}|\cdots|I^{\overline{b}};g,b}, \label{EQHerbstFormOfS}
\end{gather}
where the summation runs through $g\geq 0$, $b\geq 1$, $\overline{b}\geq 0$ and $I^k\in[\dim A]^{\times l_k}$ for each $1\leq k\leq\overline{b}$, such that $\overline{b}\leq b$ and $l_1+\cdots+l_{\overline{b}}=n$ and $G=2g+b-1$;
and the coefficients $C^{g,b}_{I^{1}|\cdots|I^{\overline{b}}} \in \fld$ are required to satisfy
\begin{gather} \label{EQSymmetriesOfCS}
\begin{split}
C^{g,b}_{\cdots|I^{k}|I^{k+1}|\cdots} = (-1)^{\dg{\phi^{I^k}}\dg{\phi^{I^{k+1}}}} C^{g,b}_{\cdots|I^{k+1}|I^{k}|\cdots}, \\
C^{g,b}_{\cdots|I^k_1\cdots I^k_{l_k}|\cdots} = (-1)^{\dg{\phi^{I^k_1}} (\dg{\phi^{I^k_2}}+\cdots+\dg{\phi^{I^k_{l_k}}})} C^{g,b}_{\cdots|I^k_2\cdots I^k_{l_k}I^k_1|\cdots}
\end{split}
\end{gather}
for each $k$, where $I^k=(I^k_1\cdots I^k_{l_k})$.

If $l_1\leq\cdots\leq l_{\overline{b}}$ and $(b_k)$ is the corresponding $b$-sequence, let 
$$f^{(b_k),g}(a_{I^{1}\cdots I^{\overline{b}}})=C^{g,b}_{I^{1}|\cdots|I^{\overline{b}}}.$$
Then $\sum_{n,G}S_{n,G}$ satisfies the master equation \eqref{EQMEQAoo} iff $f^{(b_k),g}$'s satisfy the conditions of Theorem \ref{THMQA_infty}.
\end{lemma}

\begin{proof}
Let's describe a basis of $\tilde{P}(n,G)$.
Let $\Stab\overline{(b_k)}^g$ be the stabilizer of $\overline{(b_k)}^g\in\oQO([n],G)$ inside $\Sigma_n$.
Observe that
$$|\Stab\overline{(b_k)}^g|=\prod_{k\geq 1}b_k!k^{b_k}.$$
$\Stab\overline{(b_k)}^g$ acts on $[\dim A]^{\times n}$.
For every orbit of this action, choose a representative.
Let $B^{(b_k),g}$ be the set of all these chosen representatives.
Then it is easy to see that the basis of $\tilde{P}(n,G)$ consists of all elements of the form
$$\overline{(b_k)}^g\ot_{\Sigma_n}\phi^I,$$
where $\overline{(b_k)}^g$ runs through all the canonical $\Sigma_n$ orbit representatives \eqref{EQOrbitRepresentative} in $\oQO([n],G)$ and $I$ runs through $B^{(b_k),g}$.
Thus arbitrary element of $\tilde{P}(n,G)$ can be written uniquely in the form
\begin{gather} \label{EQQOTildePBasis}
\sum_{\substack{\overline{(b_k)}^g\\ I\in B^{(b_k),g}}} C^{(b_k),g}_I \ \overline{(b_k)}^g\ot_{\Sigma_n}\phi^I
\end{gather}
for some coefficients $C^{(b_k),g}_I\in\fld$.

Now we wish to sum over all $I\in[\dim A]^{\times n}$.
For a fixed $I\in B^{(b_k),g}$ and $\sigma\in\Stab\overline{(b_k)}^g$, observe that $\overline{(b_k)}^g\ot_{\Sigma_n}\phi^{\sigma I} = \pm \overline{(b_k)}^g\ot_{\Sigma_n}\phi^{I}$.
With the same sign, we define
$$C^{(b_k),g}_{\sigma I} := \pm C^{(b_k),g}_I.$$
Replacing $B^{(b_k),g}$ by $[\dim A]^{\times n}$ in the summation above overcounts by the factor $|\Stab\overline{(b_k)}^g|$.
Thus
\begin{gather}
\sum_{\substack{\overline{(b_k)}^g\\ I\in B^{(b_k),g}}} C^{(b_k),g}_I \overline{(b_k)}^g\ot_{\Sigma_n}\phi^I = \sum_{\substack{\overline{(b_k)}^g\\ I\in[\dim A]^{\times n}}} \frac{1}{|\Stab\overline{(b_k)}^g|} C^{(b_k),g}_I \ \overline{(b_k)}^g\ot_{\Sigma_n}\phi^I. \label{EQSecondUniqueExpression}
\end{gather}

Next, observe that the choice of $\overline{(b_k)}^g$ is equivalent to choosing $g,b,\overline{b}$ and $l_1,\ldots,l_{\overline{b}}$ such that $G=2g+b-1$, $\overline{b}\leq b$, $l_i\geq 1$ for each $i$ and $l_1+\cdots+l_{\overline{b}}=n$.
Notice that the order of $l_i$'s is irrelevant.
In other words, we are choosing an unordered partition of $n$, which we will encode by a multiset\footnote{That is, we allow for repeated elements.} $\{l_1,\ldots,l_{\overline{b}}\}$.
So we see that we are in fact summing over all $g,b$ and unordered partitions $\{l_1,\ldots,l_{\overline{b}}\}$ subject to the conditions above.
It is more natural to sum simply over positive numbers $l_1,\ldots,l_{\overline{b}}$ (that is, over ordered partitions of $n$).
To change the summation, consider a decomposition $I^1,\ldots,I^{\overline{b}}$ of $[n]$ into disjoint subsets consisting of $l_1,\ldots,l_{\overline{b}}$ elements.
If $l_1\leq\cdots\leq l_{\overline{b}}$, define
$$C^{g,b}_{I^{1}|\cdots|I^{\overline{b}}} := C^{(b_k),g}_{I^{1}\cdots I^{\overline{b}}},$$
where the multiindices in the subscript of the RHS are concatenated.
For $\sigma\in\Sigma_{\overline{b}}$, observe that
$$\phi^{I^{\sigma^{-1}(1)}|\cdots|I^{\sigma^{-1}(\overline{b})};g,b} = \pm \phi^{I^{1}|\cdots|I^{\overline{b}};g,b}$$
(recall \eqref{EQHerbstPhi}).
With the same sign, we define
$$C^{g,b}_{I^{\sigma^{-1}(1)}|\cdots|I^{\sigma^{-1}(\overline{b})}} := \pm C^{g,b}_{I^{1}|\cdots|I^{\overline{b}}}.$$
Notice that this is analogous to the definition of $\mathcal{F}^{g,b}_{I^1|\cdots|I^n}$ in \eqref{EQBigF}.
Let $UP(n)$ be the set of all unordered partitions $\{l_1,\ldots,l_{\overline{b}}\}$ of $n$.
Let $OP(n)$ be the set of all ordered partitions $(l_1,\ldots,l_{\overline{b}})$ of $n$.
By replacing $UP(n)$ in the summation \eqref{EQSecondUniqueExpression} by $OP(n)$, we overcount again.
To see how many times, look at the map $OP(n)\to UP(n)$ given by $(l_1,\ldots,l_{\overline{b}}) \mapsto \{l_1,\ldots,l_{\overline{b}}\}$.
Let $(b_k)$ be the $b$-sequence associated to $\{l_1,\ldots,l_{\overline{b}}\}$.
Then we immediately see that the inverse image of $\{l_1,\ldots,l_{\overline{b}}\}$ contains exactly $\frac{\overline{b}!}{\prod_{k\geq 1}b_k!}$ elements.
Thus
\begin{gather*}
\sum_{\substack{\overline{(b_k)}^g\\ I\in[\dim A]^{\times n}}} \frac{1}{|\Stab\overline{(b_k)}^g|} C^{(b_k),g}_I \ \overline{(b_k)}^g\ot_{\Sigma_n}\phi^I = \sum_{\substack{g,b\\ I^1,\ldots,I^{\overline{b}}}} \frac{1}{|\Stab\overline{(b_k)}^g|} \frac{\prod_{k\geq 1}b_k!}{\overline{b}!} C^{g,b}_{I^{1}|\cdots|I^{\overline{b}}} \ \phi^{I^{1}|\cdots|I^{\overline{b}};g,b} = \\
= \sum_{\substack{g,b\\ I^1,\ldots,I^{\overline{b}}}} \frac{1}{\overline{b}!l_1\cdots l_{\overline{b}}} C^{g,b}_{I^{1}|\cdots|I^{\overline{b}}} \ \phi^{I^{1}|\cdots|I^{\overline{b}};g,b},
\end{gather*}
where each $I^k$ runs over all elements of $[\dim A]^{\times l_k}$.

The last claim of the lemma follows by comparing \eqref{EQIsoYetOneMoreOhh} to \eqref{EQSecondUniqueExpression}:
So we need to write \eqref{EQIsoYetOneMoreOhh} in the basis as in \eqref{EQQOTildePBasis}.
Recall that $f^{(b_k),g}=\alpha(\overline{(b_k)}^g)$ (the operad morphism $\alpha:\feyn{\oQO}\to\oEnd{A}$ determines the $\feyn{\oQO}$ algebra structure) and similarly set $f^{q}=\alpha(q)$ for arbitrary $q\in\oQO$.
Let $q$ run through the basis of $\oQO([n],G)$.
\begin{gather*}
\frac{1}{n!} \sum_{\substack{q\\ I\in[\dim A]^{\times n}}} q \ot_{\Sigma_n} \alpha(q)(a_I) \phi^I = \\
= \frac{1}{n!} \sum_{\substack{\overline{(b_k)}^g\\ \sigma\in\Sigma_n \\ I\in[\dim A]^{\times n}}} \frac{1}{|\Stab{\overline{(b_k)}^g}|} \ \sigma \overline{(b_k)}^g \ot_{\Sigma_n} \alpha(\oQO(\sigma)\overline{(b_k)}^g)(a_I) \phi^I = \\
= \frac{1}{n!} \sum_{\substack{\overline{(b_k)}^g\\ \sigma\in\Sigma_n \\ I\in[\dim A]^{\times n}}} \frac{1}{|\Stab{\overline{(b_k)}^g}|} \ \overline{(b_k)}^g \ot_{\Sigma_n} \alpha(\overline{(b_k)}^g)(a_I) \phi^I = \\
= \sum_{\substack{\overline{(b_k)}^g\\ I\in[\dim A]^{\times n}}} \frac{1}{|\Stab{\overline{(b_k)}^g}|} \ \overline{(b_k)}^g \ot_{\Sigma_n} \alpha(\overline{(b_k)}^g)(a_I) \phi^I = \\
= \sum_{\substack{\overline{(b_k)}^g\\ I\in B^{(b_k),g}}} f^{(b_k),g}(a_I) \ \overline{(b_k)}^g \ot_{\Sigma_n} \phi^I,
\end{gather*}
where the final step is carried out as in \eqref{EQSecondUniqueExpression}.
We have proved $f^{(b_k),g}(a_I) = C^{(b_k),g}_I$ and this is easily seen to extend to $f^{(b_k),g}(a_I) =  C^{g,b}_{I^1|\cdots|I^{\overline{b}}}$.
\end{proof}

Lemma \ref{LEMAnotherFormOfS} implies that
$$C^{g,b}_{I^{1}|\cdots|I^{\overline{b}}} = -2 \mathcal{F}^{g,b}_{I^{1}|\cdots|I^{\overline{b}}}.$$
This substitution leads to the Herbst's formula (42) of \cite{Herbst} for the generating function:
$$S = \sum_{n,G}S_{n,G} = -2 \sum_{n,G} \sum_{\substack{g,b\\ I_1,\ldots,I_{\overline{b}}}} \frac{1}{\overline{b}!l_1\cdots l_{\overline{b}}} \mathcal{F}^{g,b}_{I^{1}|\cdots|I^{\overline{b}}} \ \phi^{I^{1}|\cdots|I^{\overline{b}};g,b}.$$

Let us note that the reduced string vertices are graded symmetric only with respect to permutations of non-empty cycles. In order to be later, when discussing the quantum open-closed string field theory, compatible with the 
physics notation, we introduce the string vertices as follows:
\begin{gather} \label{EQBigF1}
\mathcal{V}^{g,b}_{I'^1|\cdots|I'^b} = b_0!\mathcal{F}^{g,b}_{I^1|\cdots|I^\ob},
\end{gather}
where on the LHS, multiindices $I'^i$ of zero length are allowed and $I^i$ on the RHS are obtained by omitting these zero length multiindices while keeping the order of the others.
In terms of so defined string vertices we have
$$S = -2 \sum_{n,b,g} \sum_{I'^1,\ldots,I'^b} \frac{1}{b!\prod'_k l'_{k}} \mathcal{V}^{g,b}_{I'^1|\cdots|I'^b} \ \phi^{I'^1|\cdots|I'^b},$$
where $\prod'$ is the product of nonzero $l'_k$'s.

%%%%%%%%%%%%%%%%%

\subsection{\texorpdfstring{Relation between $\oMod{\oAss}$ and $\oQO$}{Relation between Mod(Ass) and QO}}

Motivated by the relationship between $\oQC$ and its genus zero part, cyclic $\oCom$, in closed string case (Theorem \ref{THMModCom}), one can ask the same question for open strings: Is $\oQO$ the modular envelope of its genus zero part, cyclic $\oAss$?
This is obvious neither from the topological viewpoint (in terms of $2$-dimensional surfaces), nor from the algebraic viewpoint (in terms of adding the results of $\xi_{ab}$ compositions to $\oAss$ as freely as possible).
An affirmative answer is given in \cite{MDModAss}:

\begin{theorem} \label{CONModAss}
There is a modular operad isomorphism $\oQO \cong \oMod{\oAss}$ compatible with the obvious morphisms from $\oAss$ to $\oQO$ resp. to $\oMod{\oAss}$.
\end{theorem}

%%%%%%%%%%%%%%%%%
%%%%%%%%%%%%%%%%%
%%%%%%%  QOC   %%%%%%%
\section{\texorpdfstring{Quantum open-closed Operad and related algebraic structures}{Quantum open-closed Operad and related algebraic structures}} \label{sec:qoc}

\subsection{\texorpdfstring{$2$-coloured modular operad}{2-coloured modular operad}}

Here we define a $2$-coloured modular operad with half-integral genus and related notions.
This is clearly only a provisional definition - it is coined to express the master equation for open-closed string theory.
We expect it to fit into a more general framework for ``operads of modular type''.

\begin{definition}
Let $\cCo_{2}$ be the category of stable $2$-coloured corollas: the objects are pairs $(O,C,G)$ with $O,C$ finite sets\footnote{O stands for Open, C for Closed.} and $G$ a nonnegative half-integer\footnote{That is, $G$ is of the form $a/2$ for some $a\in\N_0$.} such that the stability condition is satisfied:
\begin{gather} \label{EQTwoColorStab}
2(G-1)+|O|+|C|>0.
\end{gather}
Elements of $O$ are called open, elements of $C$ are closed.

A morphism $(O,C,G)\to(O',C',G')$ is defined only if $G=G'$ and it is a pair of bijections $O\xrightarrow{\sim}O'$ and $C\xrightarrow{\sim}C'$.
\end{definition}

\begin{definition}
To define a $2$-coloured modular operad, replace $\cCo$ by $\cCo_2$ everywhere in Definition \ref{DEFModOp} and also require $\ooo{a}{b}$ and $\xi_{ab}$ to be defined only if both $a,b$ are open or both are closed.

In the same way, we obtain the definition of twisted $2$-coloured modular operad.

Now we define the $2$-coloured twisted modular endomorphism operad:
Let $A_{\uo}\oplus A_{\uc}$ be an abbreviation for the direct sum of dg symplectic vector spaces $(A_{\uo},d_{\uo},\omega_{\uo})$ and $(A_{\uc},d_{\uc},\omega_{\uc})$.
Let 
$$\oEnd{A_{\uo}\oplus A_{\uc}}(O,C,G) := \op{Hom}_{\fld}(\bigotimes_O A_{\uo} \ot \bigotimes_C A_{\uc},\fld).$$
There are homogeneous bases $\{a_i^{\uo}\},\{b_i^{\uo}\}$ on $A_{\uo}$ related by the obvious analogue of \eqref{EQDefOmegaInverse} and similarly $\{a_i^{\uc}\},\{b_i^{\uc}\}$ on $A_{\uc}$.
Then $\ooo{a}{b}$ and $\xi_{ab}$ are defined, analogously to \eqref{EQoo} and \eqref{EQxi}, using the ${\uo}$-indexed (resp. ${\uc}$-indexed) bases if $a,b$ are open (resp. closed).

Algebra over a $2$-coloured twisted modular operad is again defined by replacing $\cCo$ by $\cCo_2$ in Definition \ref{DEFAlgOverTwisted}.

The notion of Feynman transform of a $2$-coloured modular operad is defined using a suitable definition of $2$-coloured graphs.
We leave it to the reader to fill in the details.
\end{definition}

The analogue of Theorem \ref{LEMMAAlgOverFeynTrans} is:

\begin{theorem} \label{THMFTBiColour}
Algebra over the Feynman transform $\feyn{\oP}$ of a $2$-coloured modular operad $\oP$ on a dg symplectic vector space $A:=A_{\uo}\oplus A_{\uc}$ is uniquely determined by a collection
$$\set{\alpha(O,C,G):\oP(O,C,G)^{\#}\to\oEnd{A}(O,C,G)}{(O,C,G)\in\cCo_2}$$
of degree $0$ linear maps (no compatibility with differential on $\oP(O,C,G)^{\#}$!) such that 
\begin{gather*}
\oEnd{A}(\rho) \comp \alpha(O,C,G) = \alpha(O',C',,G) \comp \oP(\rho^{-1})^{\#} \textrm{ for any }\rho:(O,C,G)\to(O',C',G) \textrm{ in }\cCo_2, \\
	%\oEnd{A}(\rho) \!\comp\! \alpha(O,C,G) \!=\! \alpha(O'\!,C',\!,G) \!\comp\! \oP(\rho^{-1})^{\#} \textrm{ for any morphism }\rho\!:\!(O,C,G)\!\to\!(O'\!,C'\!,G) \textrm{ in }\cCo_2 \\
	d \comp \alpha(O,C,G) = \alpha(O,C,G) \comp \partial_{\oP}^{\#} + \\
	+ (\xi_{ab})_{\oEnd{A}} \comp \alpha(O\sqcup\{a,b\},C,G-1) \comp (\xi_{ab})^{\#}_{\oP} \ + \\
	+ (\xi_{ab})_{\oEnd{A}} \comp \alpha(O,C\sqcup\{a,b\},G-1) \comp (\xi_{ab})^{\#}_{\oP} \ + \\
	+ \frac{1}{2} \sum_{\substack{O_1\sqcup O_2 = O \\ C_1\sqcup C_2=C \\ G_1+G_2=G}} \hspace{-1em} (\ooo{a}{b})_{\oEnd{A}} \!\comp\! \left( \alpha(O_1\sqcup\{a\},C_1,G_1)\ot\alpha(O_2\sqcup\{b\},C_2,G_2) \right) \!\comp\! (\oooo{a}{b}{O_1\sqcup\{a\},C_1,G_1}{O_2\sqcup\{b\},C_2,G_2})^{\#}_{\oP} + \\
	+ \frac{1}{2} \sum_{\substack{O_1\sqcup O_2 = O \\ C_1\sqcup C_2=C \\ G_1+G_2=G}} \hspace{-1em} (\ooo{a}{b})_{\oEnd{A}} \!\comp\! \left( \alpha(O_1,C_1\sqcup\{a\},G_1)\ot\alpha(O_2,C_2\sqcup\{b\},G_2) \right) \!\comp\! (\oooo{a}{b}{O_1,C_1\sqcup\{a\},G_1}{O_2,C_2\sqcup\{b\},G_2})^{\#}_{\oP}.
\end{gather*}
\end{theorem}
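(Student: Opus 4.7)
The plan is to follow the same strategy as the proof of Theorem \ref{LEMMAAlgOverFeynTrans}, observing that the $2$-coloured setting introduces no essentially new difficulty: the colouring simply splits the set of half-edges in Feynman graphs into two families (open and closed), and since gluings are only allowed between half-edges of matching colour, the Feynman differential decomposes into four contributions corresponding exactly to the four operadic summands in the statement.

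First, I would recall that the underlying $\Sigma$-module of $\feyn{\oP}$ is generated freely by the corolla summands: for each $(O,C,G)\in\cCo_2$, the subspace of $\feyn{\oP}(O,C,G)$ corresponding to the one-vertex graph with leg labelling $O\sqcup C$ is canonically $\oP(O,C,G)^{\#}$, and every other basis graph is obtained from such corollas by repeated application of $\ooo{a}{b}$ and $\xi_{ab}$. Consequently, any morphism $\alpha:\feyn{\oP}\to\oEnd{A}$ of $2$-coloured twisted modular operads is determined by its restriction to these corolla summands, which is precisely the collection $\{\alpha(O,C,G)\}$. Conversely, given such a collection, one extends it to all of $\feyn{\oP}$ by the defining operations; the extension is well defined because $\feyn{\oP}$ is free as an operad on its corolla decorations.

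Second, I would translate the axioms of a twisted operad morphism into the two displayed conditions. The equivariance axiom under bijections, restricted to corollas, becomes the first equation: $\feyn{\oP}(\rho)$ acts on the corolla decoration as $\oP(\rho^{-1})^{\#}$, so the intertwining property reads exactly $\oEnd{A}(\rho)\circ\alpha(O,C,G)=\alpha(O',C',G)\circ\oP(\rho^{-1})^{\#}$. The compatibility with differentials is the content of the second displayed equation. Recall that $\partial_{\feyn{\oP}}$ on a corolla decoration $P\in\oP(O,C,G)^{\#}$ splits as $\partial_{\oP}^{\#}(P)$ plus the Feynman part, which adds one new edge to the corolla. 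A new edge has a colour and is either a self-loop at the vertex (giving a $\xi_{ab}$-type term, one summand per colour) or it joins two vertices obtained by splitting the corolla (giving an $\ooo{a}{b}$-type sum over decompositions $O=O_1\sqcup O_2$, $C=C_1\sqcup C_2$, $G=G_1+G_2$, one sum per colour of the new edge); the decoration at the endpoints is modified by the dual of the corresponding $\oP$-operation. Pushing this through $\alpha$ and using the compatibility of $\alpha$ with $\ooo{a}{b}$ and $\xi_{ab}$ on $\oEnd{A}$ gives exactly the four terms on the right-hand side, with the factor $\tfrac{1}{2}$ accounting for the ordered-versus-unordered double count of edge endpoints in the decomposition sums.

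The main obstacle is the sign bookkeeping. Because $\feyn{\oP}$ is a $\KK$-twisted modular operad with the determinant-of-edges coefficient system, each added edge carries the degree-$+1$ generator $\uparrow e$; on the twisted endomorphism side, $\xi_{ab}$ and $\ooo{a}{b}$ are themselves of degree $+1$. One has to check that the Koszul signs produced by commuting these degree-$+1$ operations past the existing edge generators match the signs produced by dualizing the $\oP$-operations, so that the identity in the statement holds without hidden sign corrections. Once this check is made for a single added edge (which is the content of the standard monochromatic proof in Chapter III.5.7 of \cite{MarklOperads}), the $2$-coloured version follows immediately, since the colouring is respected by every operation involved and plays no role in the sign calculation.
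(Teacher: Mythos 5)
The paper itself gives \emph{no} proof of this theorem: it is stated immediately after ``The notion of Feynman transform of a $2$-coloured modular operad is defined using a suitable definition of $2$-coloured graphs.\ We leave it to the reader to fill in the details,'' and is introduced only as ``the analogue of Theorem~\ref{LEMMAAlgOverFeynTrans}''---which is itself stated without proof and attributed to \cite{MarklLoop} and \cite{BarannikovModopBV}.\ So there is no argument in the paper to compare yours against; you are supplying content the authors deliberately omitted.

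Your sketch is the standard argument and is consistent with everything the paper says around Theorem~\ref{LEMMAAlgOverFeynTrans}: the underlying (twisted) modular operad of $\feyn{\oP}$ is free on the $\Sigma$-module $\oP^{\#}$ of corolla decorations, so a morphism $\feyn{\oP}\to\oEnd{A}$ is determined by its restriction to corollas; equivariance on corollas is the first displayed equation; and compatibility of $\alpha$ with the Feynman differential, which on a corolla decoration adds a single edge of one of the two colours (either a self-loop, giving the two $\xi_{ab}$-terms, or an edge splitting the corolla into two, giving the two $\frac12\sum\ooo{a}{b}$-terms) while dualizing the corresponding $\oP$-operation, is the second.\ The factor $\frac12$ is the usual unordered/ordered double count.\ You rightly flag the sign bookkeeping as the only delicate point and observe that it reduces to the monochromatic calculation of \cite{MarklOperads}, since the colouring splits half-edges disjointly and has no bearing on degrees.\ That reduction is plausible but not carried out; still, it is at the same (or a higher) level of rigour than the paper itself, which would require first fixing a precise definition of $2$-coloured stable graphs and the attendant determinant-of-edges cocycle before any full proof could be written down.\ In short: the approach is right, it matches the paper's intent, and it is more explicit than what the paper provides.
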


%%%%%%%%%%%%%%%%%%%

\subsection{\texorpdfstring{The modular operad $\oQOC$}{The modular operad QOC}}

The idea is that the $2$-coloured modular operad $\oQOC$ consists of labeled $2$-dimensional orientable surfaces with both open and closed ends, as explained for $\oQC$ and $\oQO$.
We are only allowed to glue two open ends together or two closed ends together.

\begin{definition}
\begin{multline}
\oQOC(O,C,G) := \Span_{\fld} \big\{ \{\co_1,\ldots,\co_b\}^g_C \ |\ b\in\N_0,\ g\in\N_0,\ \co_i\textrm{'s are cycles in }O,\ \bigsqcup_{i=1}^b\co_i=O,\\
G=2g+b+\frac{|C|}{2}-1 \big\}
\end{multline}
where $\{\co_1,\ldots,\co_b\}^g_C$ is a symbol of degree $0$.
The geometric interpretation of the element $\{\co_1,\ldots,\co_b\}^g_C$ is the following:
It is the homeomorphism class of a surface with boundary such that each boundary component is either a closed end or corresponds to one of the cycles $\co_1,\ldots,\co_b$.
The closed ends are labeled by the set $C$.
The boundary component corresponding to a cycle $\co_k$ contains exactly $|\co_k|$ open ends labeled by the elements of $\co_k$.
The pictures are combinations of those seen for operads $\oQC$ and $\oQO$.

Recall that $\oQOC(O,C,G)$ is defined only if the stability condition $2(G-1)+|O|+|C|>0$ is met.
Equivalently, this is
\begin{gather*}
4g+2b+2|C|+|O|-4>0.
\end{gather*}
For bijections $\rho_{\uo}:O\xrightarrow{\sim}O'$ and $\rho_{\uc}:C\xrightarrow{\sim}C'$, let 
$$\oQO(\rho_{\uo},\rho_{\uc})(\{\cc_1,\ldots,\cc_b\}^g_C) := \{\rho_{\uo}(\cc_1),\ldots,\rho_{\uo}(\cc_b)\}^g_{\rho_{\uc}(C)}.$$

%Next, we define $\ooo{a}{b} : \oQOC(O_1\sqcup\{a\},C_1,G_1) \ot \oQOC(O_2\sqcup\{b\},C_2,G_2) \to \oQOC(O_1\sqcup O_2,C_1\sqcup C_2,G_1+G_2)$.
Assume $\co_i=\cyc{a,x_1,\ldots,x_m}$ is a cycle in $O_1\sqcup\{a\}$ and let $\co'_j=\cyc{b,y_1,\ldots,y_n}$ be a cycle in $O_2\sqcup\{b\}$.
Then the operadic composition along open ends is defined by
\begin{gather*}
\ooo{a}{b}(\{\co_1,\ldots,\co_{b_1}\}^{g_1}_{C_1} \ot \{\co'_1,\ldots,\co'_{b_2}\}^{g_2}_{C_2}) := \nonumber \\
\{\cyc{x_1,\ldots,x_m,y_1,\ldots,y_m},\co_1,\ldots,\widehat{\co_i},\ldots,\co_{b_1},\co'_1,\ldots,\widehat{\co'_j},\ldots,\co'_{b_2}\}^{g_1+g_2}_{C_1\sqcup C_2}.
\end{gather*}
The operadic composition along closed ends is defined by
\begin{gather}
\ooo{a}{b}(\{\co_1,\ldots,\co_{b_1}\}^{g_1}_{C_1\sqcup\{a\}} \ot \{\co'_1,\ldots,\co'_{b_2}\}^{g_2}_{C_2\sqcup\{b\}}) := \nonumber \\
\{\co_1,\ldots,\co_{b_1},\co'_1,\ldots,\co'_{b_2}\}^{g_1+g_2}_{C_1\sqcup C_2}.
\end{gather}

%Now we define $\xi_{ab} : \oQO(C\sqcup\{a,b\},G) \to \oQO(C,G+1)$.
The operadic contraction along open ends is defined as follows:
If there are $i<j$ such that $\co_i=\cyc{a,x_1,\ldots,x_m}$ and $\co_j=\cyc{b,y_1,\ldots,y_n}$, then define
\begin{gather*}
\xi_{ab}(\{\co_1,\ldots,\co_{b}\}^{g}_C) := \{\cyc{x_1,\ldots,x_m,y_1,\ldots,y_m},\co_1,\ldots,\widehat{\co_i},\ldots,\widehat{\co_j},\ldots,\co_{b}\}^{g+1}_C.
\end{gather*}
Otherwise, there is $i$ such that $\co_i=\cyc{a,x_1,\ldots,x_m,b,y_1,\ldots,y_m}$ and then
\begin{gather*}
\xi_{ab}(\{\co_1,\ldots,\co_{b}\}^{g}_C) := \{\cyc{x_1,\ldots,x_m},\cyc{y_1,\ldots,y_m},\co_1,\ldots,\widehat{\co_i},\ldots,\co_{b}\}^{g}_C.
\end{gather*}
The operadic contraction along closed ends is defined by
$$\xi_{ab}(\{\co_1,\ldots,\co_{b}\}^{g}_{C\sqcup\{a,b\}}) := \{\co_1,\ldots,\co_{b}\}^{g+1}_{C}.$$
\end{definition}
 
%A formal remark: The sets $O$ and $C$ needn't be disjoint. If e.g. $a,b\in O\cap C$, then $\xi_{ab}(x)$ for $x\in\oQOC(O,C,G)$ is ambiguous since it is not clear whether we compose along open or closed ends.
%However, the correct meaning is always clear from the context.

The reader can now verify:

\begin{theorem}
$\oQOC$ is a $2$-coloured modular operad. \qed
\end{theorem}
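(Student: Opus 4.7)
The plan is to verify Axioms $1.$--$8.$ of the $2$-coloured analogue of Definition \ref{DEFModOp} for $\oQOC$, treating the open and closed cases separately and then checking that the mixed-colour combinations are harmless. First I would check that each structure operation preserves the stability condition \eqref{EQTwoColorStab}: for the closed composition $\ooo{a}{b}$ the data $(g,b,|C|)$ transforms as $(g_1+g_2,\, b_1+b_2,\, |C_1|+|C_2|)$ with total genus $G_1+G_2$; for the open composition the number of boundary cycles drops by one, balancing the loss of two labels in $O$; for the closed contraction $\xi_{ab}$ we gain one in $g$ and lose two from $|C|$; and for the open contraction both subcases (two labels in one cycle, versus in two different cycles) shift $(g,b)$ so as to increase $G$ by exactly one. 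Since all generators live in degree $0$, every Koszul sign in the axioms is trivial.

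Next, Axioms $2.$ and $3.$--$4.$ (functoriality and equivariance) follow immediately from the explicit formulas, because the bijections $(\rho_{\uo},\rho_{\uc})$ act by relabelling: they commute with the prescribed surgery on cycles and on $C$. Axiom $1.$ (commutativity of $\ooo{a}{b}$) for open composition is a direct match of the two resulting sets of cycles (the merged cycle $\cyc{x_1,\dots,x_m,y_1,\dots,y_n}$ equals $\cyc{y_1,\dots,y_n,x_1,\dots,x_m}$ as an element of $\oQOC$ by Definition \ref{DEFCycle}), and for closed composition the formula is manifestly symmetric in the two tensor factors.

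For Axioms $5.$--$8.$ I would organise the case analysis by how many of the four contracted/composed labels are open versus closed. When both pairs live in different colours the two operations touch disjoint pieces of the datum $\{\co_1,\dots,\co_b\}^g_C$ and commute on the nose. When both pairs are closed the operations act only on $(C,g)$ exactly as in $\oQC$, so $5.$--$8.$ reduce to the already-checked $\oQC$ axioms (which are trivial: $\xi_{ab}$ removes the pair and adds $1$ to $g$, $\ooo{a}{b}$ concatenates). When both pairs are open the claim reduces to the axioms for $\oQO$, which are listed as a theorem in the excerpt; here one must go through the subcases determined by whether the four labels $a,b,c,d$ lie in one, two, three, or four cycles of the input, and in each subcase verify that the two sides of Axiom $5.$, $6.$, $7.$ or $8.$ produce identical cycle configurations.

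The most tedious step -- and the only real obstacle -- is this open-open case for Axioms $6.$ and $8.$, because the surgery formulas \eqref{EQDefQOOO}, \eqref{EQXiQOOne}, \eqref{EQXiQOTwo} distinguish many subcases (labels inside the same cycle versus different cycles, empty cycles appearing as outputs of $\xi_{ab}$ on $\cyc{a,b}$ or $\cyc{a}\sqcup\cyc{b}$, etc.). In each subcase one tracks which open labels end up as consecutive letters in which resulting cycle and verifies that both sides give the same unordered collection of cycles with the same $g$. The closed set $C$ and the closed genus contribution ride along untouched throughout, so the verification for $\oQOC$ reduces cleanly to the verifications for $\oQC$ and $\oQO$; no new combinatorial phenomenon is introduced by passing to the $2$-coloured setting.
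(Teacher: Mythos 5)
Your proof sketch is correct; the paper itself offers no proof, stating only ``The reader can now verify'' followed by $\qed$, so you are supplying the verification the authors omitted. Your organization is the natural one: the reduction by colour — open--open cases reducing to the $\oQO$ axioms, closed--closed cases reducing to $\oQC$, and mixed cases commuting because the surgeries touch disjoint parts of the datum $\{\co_1,\ldots,\co_b\}^g_C$ — together with the observation that all generators are in degree $0$ (so Koszul signs vanish) and the explicit genus bookkeeping for stability ($G=2g+b+|C|/2-1$ is additive under both $\ooo{a}{b}$'s, increments by $1$ under both $\xi_{ab}$'s), is exactly what one must check.
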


%%%%%%%%%%%%%%%%%%%

\subsection{\texorpdfstring{Quantum open-closed homotopy algebra}{Quantum open-closed homotopy algebra}}

\begin{definition}
A quantum open-closed homotopy algebra is an algebra over $\feyn{\oQOC}$.
\end{definition}

The quantum open-closed homotopy algebras play the role in open-closed string theory similar to that of loop homotopy algebras (algebras over $\feyn{\oQC}$) and quantum $A_\infty$ algebras (algebras over $\feyn{\oQO}$) in closed and open string theory respectively.
They correspond to the solutions of the consistency conditions on string vertices.
The methods explained in this article allow making these consistency condition explicit in two ways:

First, we can make the axioms of algebras over $\feyn{\oQOC}$ explicit in the spirit of Sections \ref{SECTLoopHomAlg} and \ref{SECAxiomsForQuantumAoo} using Theorem \ref{THMFTBiColour}.
In this case, the algebra is given in terms of generating operations and relations, which directly reflect the combinatorics of the \emph{dual} of $\oQOC$.
%However, already the case of quantum $A_\infty$ algebras was too complicated for any reasonable further use.
A sufficiently patient reader can do this on his own.

Second, we can characterize the algebras over $\feyn{\oQOC}$ as solutions of certain master equation using an analogue of Barannikov's theory of Section \ref{SSECBarannikov}.
The master equation directly reflects the combinatorics of $\oQOC$, hence the results are somewhat more intuitive then in the first case.
Adapting the theory to the $2$-coloured case is easy and we just state the result, leaving details to the reader.

\medskip

The master equation $dS+\Delta S+\frac{1}{2}\{S,S\} = 0$ is solved in the space
$$\tilde{P} = \prod_{n,m,G} \oQOC([n],[m],G) \ot_{\Sigma_n\times\Sigma_m} \left( {A_{\uo}^\#}^{\ot n}\ot{A_{\uc}^\#}^{\ot m} \right).$$
Elements of $\tilde{P}$ are conveniently denoted as in \eqref{EQHerbstPhi}:
We fix a representative
$$\overline{(b_k)}^g_m := \{\underbrace{\emptyset,\emptyset,\ldots,\emptyset}_{b_0},\underbrace{\cyc{1},\cyc{2},\ldots,\cyc{b_1}}_{b_1},\cyc{b_1+1,b_1+2},\cyc{b_1+3,b_1+4},\ldots\}^{g}_{[m]}$$
of each $\Sigma_n\times\Sigma_m$ orbit in $\oQOC([n],[m],G)$.
Let $l_1,\ldots,l_{\ob}$ be the lengths of nonempty cycles in the order above.
For $1\leq k\leq\ob$, let $I^k\in[\dim A_{\uo}]^{\times l_k}$ be multiindices.
Let $J\in[\dim A_{\uc}]^{\times m}$ be a multiindex.
Then define
\begin{gather}
\phi^{I^1|\cdots|I^{\ob};J;g,b} := \overline{(b_k)}^g_m \ot_{\Sigma_n\times\Sigma_m} \beta_{(l_1,\ldots,l_{\ob})}\phi^{I^1\cdots I^{\ob}}\ot\phi^J, % \in \oQO([n],[m],G) \ot_{\Sigma_n\times\Sigma_m}...,
\end{gather}
where $\beta_{(l_1,\ldots,l_{\ob})}\in\Sigma_n$ has the same meaning as in \eqref{EQHerbstPhi}.
The stability condition further requires $2(2g+b-2+\frac{m}{2})+m+\sum_{k=1}^{\overline{b}}l_k > 0$.
Every element of $\tilde{P}$ can be written in the form
\begin{gather} \label{EQGenFunctQOC}
\sum_{\substack{g,\ob\\ I^1,\ldots,I^\ob,J}} \frac{1}{m!\ob!l_1\cdots l_\ob} C_{I^1|\cdots|I^\ob;J}^{g,b}\phi^{I^1|\cdots|I^\ob;J;g,b}
\end{gather}
for some coefficients $C_{I^1|\cdots|I^\ob;J}^{g,b}\in\fld$.
Moreover, if we require these coefficients to be graded symmetric in $J$'s and posses symmetries \eqref{EQSymmetriesOfCS} in $I$'s, then this expression is unique.

Let $J=(J_1,\ldots,J_m)$.
Then the BV operations are as follows:
\begin{gather*}
\Delta \phi^{I^1|\cdots|I^{\overline{b}};J;g,b} = \\
= 2\sum_{\substack{p<q\\ i,j}} \pm \omega^{I^p_iI^q_j} \phi^{ I^p_{i+1}\cdots I^p_{l_p} I^p_{1}\cdots I^p_{i-1} I^q_{j+1}\cdots I^q_{l_q} I^q_1\cdots I^q_{j-1} | I^1|\cdots|\widehat{I^p}|\cdots|\widehat{I^q}|\cdots|I^{\overline{b}} ;J ;g+1,b-1 } +{} \\
{}+ 2\sum_{\substack{p\\ i<j}} \pm \omega^{I^p_iI^q_j} \phi^{I^p_{i+1}\cdots I^p_{j-1} | I^p_{j+1}\cdots I^p_{l_p}I^p_1\cdots I^p_{i-1} | I^1|\cdots|\widehat{I^p}|\cdots|I^{\overline{b}} ;J ;g,b+1} +{} \\
{}+ 2\sum_{i<j} \pm \omega^{J_iJ_j} \phi^{I^1|\cdots|I^\ob;J_1\cdots\widehat{J_i}\cdots\widehat{J_j}\cdots J_m ;g+1,b},
\end{gather*}
where the signs are determined as in \eqref{EQQOCDeltaExplicit} and \eqref{EQQCDeltaExplicit}.
\begin{gather*}
\{ \phi^{I^1|\cdots|I^{\ob_1};J;g_1,b_1} , \phi^{K^1|\cdots|K^{\ob_2};L;g_2,b_2} \} = \\
= \sum_{p,q,i,j} \pm \omega^{I^p_iK^q_j} \phi^{ I^p_{i+1}\cdots I^p_{l_p}I^p_1\cdots I^p_{i-1}K^q_{j+1}\cdots K^q_{m_q}K^q_1\cdots K^q_{j-1}|I^1|\cdots|\widehat{I^p}|\cdots|I^{\ob_1}|K^1|\cdots|\widehat{K^q}|\cdots|K^{\ob_2};J,L;g_1+g_2;b_1+b_2-1 } +{} \\
{}+ \sum_{i,j} \pm \omega^{J_iL_j} \phi^{ I^1|\cdots|I^{\ob_1}|K^1|\cdots|K^{\ob_2};J_1\cdots\widehat{J_i}\cdots J_{m_1}L_1\cdots\widehat{L_j}\cdots L_{m_2};g_1+g_2;b_1+b_2 }
\end{gather*}
where the signs are determined as in \eqref{EQQOCBrackerExplicit} and \eqref{EQQCBracketExplicit}.

The generating function \eqref{EQGenFunctQOC} can be expressed in terms of string vertices, where we allow also for empty multiindices $I'^i$ as in \eqref{EQBigF1}:
By defining
$$\mathcal{V}^{g,b}_{I'^1|\cdots|I'^b;J} = -\frac{b_0!}{2} C^{g,b}_{I^1|\cdots|I^\ob;J},$$
we obtain
$$S = -2 \sum_{n,m,b,g} \sum_{I'^1,\ldots,I'^b,J} \frac{1}{m!b!\prod'_k l'_{k}} \mathcal{V}^{g,b}_{I'^1|\cdots|I'^b;J} \ \phi^{I'^1|\cdots|I'^b;J;g,b}.$$
This is precisely the physics expression for the open-closed string field theory action, cf. \cite{ZwiebachOpen-closed}, \cite{KajiuraOpen-closed2} or \cite{qocha}.

%%%%%%%%%%%%%%%%%%%

\subsection{\texorpdfstring{Stability}{Stability}}

In this section we discuss the stability condition \eqref{EQTwoColorStab}.

Let's list the $2$-dimensional surfaces with open and closed ends which are unstable according to the above definition:
\begin{enumerate}
	\item $g=1,\ b=0,\ |C|=0,\ |O|=0$ ($G=1$) : $\{\}^1_\emptyset$, torus.
	\item $g=0,\ b=2,\ |C|=0,\ |O|=0$ ($G=1$) : $\{\emptyset,\emptyset\}^0_\emptyset$, sphere with $2$ empty boundaries.
	\item $g=0,\ b=1,\ |C|=1,\ |O|=0$ ($G=1/2$) : $\{\emptyset\}^0_{\{1\}}$, sphere with $1$ closed end and $1$ empty boundary.
	\item $g=0,\ b=1,\ |C|=0,\ |O|=0,1,2$ ($G=0$) : $\{\emptyset\}^0_{\emptyset}, \{\cyc{1}\}^0_{\emptyset}, \{\cyc{1,2}\}^0_{\emptyset}$, sphere with $1$ boundary with $0,1$ or $2$ open ends.
	\item $g=0,\ b=0,\ |C|=2,\ |O|=0$ ($G=0$) : $\{\}^0_{\{1,2\}}$, sphere with $2$ closed ends.
\end{enumerate}
Notice that the sphere with $0$ or $1$ closed ends are excluded since for these $G<0$.

In physics, it is desirable to include the surface $3.$ in $\oQOC(\emptyset,[1],1/2)$.
Call it $\overline{c}$.
But $\ooot{1}{1}(\overline{c}\ot\overline{c})$ is the surface $2.$, hence that closedness under the operadic compositions implies that $2.$ has to be included too.
It is easy to check that no more unstables are produced by gluing $2., 3.$ and stable surfaces.
Thus we obtain an extension $\oQOC'$ of $\oQOC$ such that $\oQOC'(\emptyset,\{c\},1/2)$ and $\oQOC'(\emptyset,\emptyset,1)$ are $1$-dimensional.
In an algebra $\alpha:\feyn{\oQOC'}\to\oEnd{A}$, $\alpha(\overline{c}):A_c\to\fld$ is a cohomology class: $0 = d(\alpha(\overline{c})) = \alpha(\overline{c}) \comp d_A$, since one easily verifies $\partial_{\feyn{\oQOC'}}(\overline{c}) = 0$.

The corollas $(\emptyset,\{c\},1/2)$ and $(\emptyset,\emptyset,1)$ are not stable, thus formally $\oQOC'$ is not a modular operad.
However, there is no problem:
The purpose of stability is to guarantee the finiteness of number of iso classes of graphs appearing in the Feynman transform (see Lemma $2.16$ in \cite{GetzlerModop}), i.e. finiteness of dimension of $\feyn{\oP}(n,G)$ for each $n,G$.
Allowing the corollas $(\emptyset,\{c\},1/2)$ and $(\emptyset,\emptyset,1)$, this property is easily seen to be still true.

This suggests that our notion of stability is unnecessarily strict and should be refined in any more systematic approach to modular operads.

%%%%%%%%%%%%%%%%

\subsection{\texorpdfstring{Further questions}{Further questions}} \label{SECFQ}

Inspired by Theorems \ref{THMModCom} and \ref{CONModAss}, one asks whether $\oQOC$ is the modular envelope of its genus zero part.
Recall that the operadic genus $G$ of a surface $\{\cc_1,\cc_2,\ldots\}_C^g\in\oQOC(O,C,G)$ is given by the formula $G=2g+b-1+|C|/2$.
Hence
$$\oQOC(O,C,0)=\begin{cases} \Span_{\fld}\{\{\co\}^0_\emptyset\ |\ \co \textrm{ is a cycle in }O\textrm{ of length }|O|\} & \textrm{if } C=\emptyset \\ 0 & \textrm{otherwise} \end{cases}$$
Notice that the nontrivial part is a suboperad in open inputs isomorphic to $\oAss$.
Also notice that the surfaces $\emptyset_C^0$ for any $C$ (these would correspond to a suboperad in closed inputs isomorphic to $\oCom$) are not present (the stability condition removes $\emptyset_C^0$ for any $|C|\leq 2$).
Consequently, the modular envelope of $\oQOC(-,-,0)$ is trivial whenever $C\neq\emptyset$ and thus is not $\oQOC$.

However, the genus zero part of $\oQOC$ is not the right operad to consider.
For this result to have an interesting physical interpretation, we would like $\oQOC$ to be a modular envelope of an operad describing vertices of "classical limit" of Feynman diagrams, that is diagrams with no circles.
In the classical limit, the vertices are genus zero surfaces with any number of open and closed ends, but with \emph{at most one open boundary component}\footnote{Recall boundary components of surfaces are either closed ends or contain open ends (possibly no open end). The latter components are called open boundary components.}.
Thus we are led to consider the Open Closed operad $\oOC$
\begin{gather*}
\oOC(O,C) := \begin{cases} \Span_{\fld}\{\emptyset_C^0,\ \{\emptyset\}_C^0\} & \textrm{if } O=\emptyset \\ \Span_{\fld}\{\{\co\}_C^0\ |\ \co \textrm{ is a cycle in }O\textrm{ of length }|O|\} & \textrm{otherwise} \end{cases}
%\oOC(\emptyset,C) := \Span_{\fld}\{\emptyset_C\} \oplus \Span_{\fld}\{C\} = \Span_{\fld}\{\emptyset_C\} \oplus \oCom(C) \\
%\oOC(O,C) := \Span_{\fld}\{\co_C\ |\ \co \textrm{ is a cycle in }O\textrm{ of length }|O|\}
\end{gather*}
The operadic composition should be inherited from $\oQOC$ via the obvious map $\oOC\to\oQOC$.
The $\xi_{ab}$ composition is, of course, omitted.
The $\ooo{a}{b}$ composition is however not well defined, since $\ooo{a}{b}(\co_C \ot \co'_{C'}) \not\in \oOC$ in general.
Hence $\oOC$ is not a cyclic operad.
This problem was partially overcome in the work of Kajiura and Stasheff, e.g. \cite{KajiuraOpen-closed1}.
We briefly explain it here.
Observe that one can compose open ends arbitrarily in $\oOC$, but composition of closed ends is possible only if one of the composed surfaces has no open boundary.
One easily sees that this restriction is satisfied if we choose a distinguished end of each surface so that closed end is distinguished only if there is no open boundary component on the surface.
This way, $\oOC$ becomes a $2$-coloured \emph{non}cyclic operad.
In Appendix  of Kajiura and Stasheff's \cite{KajiuraOpen-closed1}, this operad is seen to be the Koszul dual of the $2$-coloured operad for a Leibniz pair.
This answer is unsatisfying, since in the absence of cyclicity, there seems to be no way to relate $\oOC$ with the \emph{modular} operad $\oQOC$ in terms of some "free" construction, e.g. modular envelope.

However, there are more ways to view $\oOC$.
E.g. it is an operadic module over the cyclic operad $\oCom$ and one can consider a variants of modular envelopes for modules.
We plan to address these questions in future.

%%%%%%%%%%%%%%%%%%%
%%%%% Acknowledgement %%%%%
\vspace{2cm}\noindent {\bf Acknowledgements}:
We would like to thank M. Markl, J. Pulmann and I. Sachs for many helpful discussions.  The research of M.D. was supported by GA\v CR P201/13/27340P, B.J. was supported by grant GA\v CR P201/12/G028, whereas K.M. was supported in parts by the DFG Transregional Collaborative Research Centre TRR 33 and the DFG cluster of excellence ``Origin and Structure of the Universe''. We also thank to DAAD (PPP) and ASCR \& MEYS (Mobility) for supporting our collaboration. Finally, B.J, wants to thank CERN for hospitality.

%%%%%%%%%%%%%%%%%%%%%%%%%%%%%%%%%%%%%%%%%%%%%%%%%%%%%%%
%%%%%%%%%%%%%%%%%%%%%%%%%%%%%%%%%%%%%%%%%%%%%%%%%%%%%%%
%%%%%%%%%%%%%%%%%%%%%%%%%%%%%%%%%%%%%%%%%%%%%%%%%%%%%%%

\end{document}